\documentclass[11pt,reqno]{amsproc}
\usepackage{amsmath}
\usepackage{amsfonts}
\usepackage{amssymb}
\usepackage{amsthm,color}
\usepackage{newlfont}
\usepackage{bbm}

\usepackage{fancyhdr}
\usepackage{comment}

\usepackage[margin=1in]{geometry}
\usepackage{times}
\usepackage{graphicx, mathtools}

\usepackage[usenames,dvipsnames,svgnames,table]{xcolor}
\usepackage[colorlinks=true, pdfstartview=FitV, linkcolor=blue, citecolor=blue, urlcolor=blue]{hyperref}


\definecolor{labelkey}{rgb}{0,0,0}

\pagestyle{plain} \setlength{\textheight}{8.5in}
\setlength{\topmargin}{-0.4in} \setlength{\oddsidemargin}{15pt}
\setlength{\evensidemargin}{\oddsidemargin} \flushbottom
\setlength{\textwidth}{6.2in}
\newtheorem{Theorem}{Theorem}[section]

\newtheorem{Lemma}[Theorem]{Lemma}
\newtheorem{Corollary}[Theorem]{Corollary}
\newtheorem{Remark}[Theorem]{Remark}

\numberwithin{equation}{section}

\begin{document}

\def\le{\left}
\def\r{\right}
\def\cost{\mbox{const}}
\def\a{\alpha}
\def\d{\delta}
\def\ph{\varphi}
\def\e{\epsilon}
\def\la{\lambda}
\def\si{\sigma}
\def\La{\Lambda}
\def\B{{\cal B}}
\def\A{{\mathcal A}}
\def\L{{\mathcal L}}
\def\O{{\mathcal O}}
\def\bO{\overline{{\mathcal O}}}
\def\F{{\mathcal F}}
\def\K{{\mathcal K}}
\def\H{{\mathcal H}}
\def\D{{\mathcal D}}
\def\C{{\mathcal C}}
\def\M{{\mathcal M}}
\def\N{{\mathcal N}}
\def\G{{\mathcal G}}
\def\T{{\mathcal T}}
\def\R{{\mathbb R}}
\def\I{{\mathcal I}}

\def\bw{\overline{W}}
\def\phin{\|\varphi\|_{0}}
\def\s0t{\sup_{t \in [0,T]}}
\def\lt{\lim_{t\rightarrow 0}}
\def\iot{\int_{0}^{t}}
\def\ioi{\int_0^{+\infty}}
\def\ds{\displaystyle}
\def\pag{\vfill\eject}
\def\fine{\par\vfill\supereject\end}
\def\acapo{\hfill\break}
\def\div{\text{div}}

\def\beq{\begin{equation}}
\def\eeq{\end{equation}}
\def\barr{\begin{array}}
\def\earr{\end{array}}
\def\vs{\vspace{.1mm} \\}
\def\rd{\reals\,^{d}}
\def\rn{\reals\,^{n}}
\def\rr{\reals\,^{r}}
\def\bD{\overline{{\mathcal D}}}
\newcommand{\dimo}{\hfill \break {\bf Proof - }}
\newcommand{\nat}{\mathbb N}
\newcommand{\E}{\mathbb E}
\newcommand{\Pro}{\mathbb P}
\newcommand{\com}{{\scriptstyle \circ}}
\newcommand{\reals}{\mathbb R}
\newcommand{\normmm}[1]{{\left\vert\kern-0.25ex\left\vert\kern-0.25ex\left\vert #1 
   \right\vert\kern-0.25ex\right\vert\kern-0.25ex\right\vert}}

\newcommand{\red}[1]{\textcolor{red}{#1}}

\def\Amu{{A_\mu}}
\def\Qmu{{Q_\mu}}
\def\Smu{{S_\mu}}
\def\H{{\mathcal{H}}}
\def\Ime{{\textnormal{Im }}}
\def\Tr{{\textnormal{Tr}}}
\def\E{{\mathbb{E}}}
\def\P{{\mathbb{P}}}
\def\span{{\textnormal{span}}}
\def\fei#1{\textcolor{red}{#1}}
\def\EE{E}
\def\HH{\mathbb{H}}
\def\OO{\mathcal O}
\def\RR{\mathbb R}
\def\abs#1{\left|#1\right|}
\def\paren#1{\left(#1\right)}

\title{The growth mechanism of boundary layers for the 2D Navier-Stokes equations}
\author[F.~Wang]{Fei Wang}
\address{Department of Mathematics, Shanghai Jiao Tong University, Shanghai, 200240}
\email{fwang256@sjtu.edu.cn}
\author[Y. C. Zhu]{Yichun Zhu}
\address{Department of Mathematics, University of Maryland, College Park, MD 20740}
\email{yzhu00@umd.edu}
\date{}

\date{}

\begin{abstract}
	We give a detailed description of formation of the boundary layers in the inviscid limit problem. To be more specific, we prove that the magnitude of the vorticity near the boundary is growing to the size of $1/\sqrt{\nu}$ and the width of the layer is spreading out to be proportional the $\sqrt{\nu} $ in a finite time period. In fact,  the growth time scaling is almost $\nu$.
	\hfill \today
\end{abstract}

\maketitle
\setcounter{tocdepth}{1} 
\tableofcontents

\section{Introduction}
We consider the Cauchy problem for the two dimensional incompressible Navier-Stokes  equations 
\begin{equation}\label{NS1}
\partial_t u -\nu \Delta u + u \cdot\nabla u + \nabla p=0
\end{equation}
\begin{equation}\label{NS2}
\text{div} u =0
\end{equation}
\begin{equation}\label{NS3}
u|_{t=0}=u_0
\end{equation}
on the half-space domain $\mathbb{H}=\mathbb{T} \times \mathbb{R}_+ = \{ (x,y)\in \mathbb{T}\times \mathbb{R}: y \geq 0 \}$, with the periodic boundary condition in $x$, and the no-slip boundary condition
\begin{equation}\label{NS4}
u|_{y=0}=0.  
\end{equation}
Here we take the torus $\mathbb{T}=[-\pi,\pi]$, but our results apply for any torus $\mathbb{T}=[-a,a]$ for $a>0$. The goal of this paper is to study the growth mechanism of the boundary layers in the inviscid limit problem of the Navier-Stokes system for initial data that are analytic only near the boundary of the domain, and are Sobolev smooth away from the boundary. As an easy implication, we know this growing procedure is still valid for analytic initial data or Sobolev initial data with support away from the boundary, which are considered in~\cite{SammartinoCaflisch98a,SammartinoCaflisch98b}, and~\cite{Maekawa14} respectively.

In mathematical fluid dynamics, one of the fundamental problems is to determine whether the solutions of the Navier-Stokes equations \eqref{NS1}--\eqref{NS2} could be described 
to the leading order by  the solutions of the Euler equations
\begin{equation*}
	\begin{aligned}
	&\partial_t u^{\EE}+u^{\EE}\cdot\nabla u^{\EE}+\nabla p^{\EE}=0 
	\\
	&\div u^{\EE}=0
\end{aligned}
\end{equation*}
in the inviscid limit $\nu\to 0$. In ~\cite{Kato84b}, Kato gave a criteria for the inviscid limit property to hold in the topology $L^\infty(0,T,L^2(\mathbb{H}))$, namely, he proved that  the limit of energy dissipation in a thin layer of size $\nu$ 
\begin{equation*}
	\lim_{\nu\to 0} ~\nu \int_0^T \int_{\{y\lesssim \nu\}}|\nabla u|^2 dxdydt=0
\end{equation*}
is equivalent to the strong convergence in the energy norm.
Further refinements and extensions based on Kato's original argument could be found in~\cite{BardosTiti13,
	ConstantinElgindiIgnatovaVicol17,ConstantinKukavicaVicol15,
	Kelliher08,Kelliher17,Masmoudi98,TemamWang97b,Wang01} ; cf.~also the recent review~\cite{MaekawaMazzucato16}. 

However, verifying the proposed properties on the sequence of Navier-Stokes solutions in the above mentioned work based on the knowledge of the initial datum is in general an outstanding open problem. In fact, even the question of whether the weak $L^2_t L^2_x$ inviscid limit holds (against test functions compactly supported in the interior of the domain), remains open. We refer the readers to~\cite{ConstantinVicol18,DrivasNguyen18, ConstantinBrazil18} for conditional results in this direction. 
The main 
difficulty of the inviscid limit problems comes from the contribution from the behavior of the fluid near a  nontrivial boundary. 
To be more specific, in the region close to the boundary $\partial \HH$, the mismatch of boundary conditions between the viscous Navier-Stokes flow (no-slip) and the inviscid Euler flow (slip) is the main obstacle of obtaining a uniform in $\nu$ estimate of the solutions.
Another difficulty is due to the fast growth of vorticity close to the boundary $\partial \HH$ as $\nu \to 0$.

Due to the mismatch of boundary conditions, in general the Navier-Stokes solutions could not be approximated well by Euler solutions near the boundary, In order to study this phenomena, Prandtl proposed the following decomposition
\begin{equation}
	\label{EQ07}
	u(x,y,z, t) = u^{E}(x,y,z,t) + u^{P}\left(x,y,\frac{z}{\sqrt{\nu}},t\right) + \OO(\sqrt{\nu})
\end{equation}
 for the Navier-Stokes solution $u$,
where $u^P$ is the solution of the Prandtl boundary layer equations. So far, the Prandtl equation are very well-understood by an extensive literature . 
For instance, see~\cite{AlexandreWangXuYang14,DietertGerardVaret19,GerardVaretMasmoudi13,IgnatovaVicol16,KukavicaMasmoudiVicolWong14,KukavicaVicol13,LiuYang16,LombardoCannoneSammartino03,MasmoudiWong15,Oleinik66,SammartinoCaflisch98a} for the well-posedness and~\cite{GerardVaretDormy10,GuoNguyen11,GerardVaretNguyen12,LiuYang17} for the ill-posedness. 

However, establishing the validity of the expansion \eqref{EQ07} only making regularity assumption on the initial data without any symmetry  presents a number of outstanding challenges and fewer results are available. In their seminal works~\cite{SammartinoCaflisch98a,SammartinoCaflisch98b} in 1998,
Sammartino-Caflisch establish the validity of the Prandtl expansion for analytic initial data in the half space $\HH$ in dimension two. While in 2014, Maekawa (\cite{Maekawa14})  proved that the inviscid
limit also holds for initial datum with Sobolev regularity whose associated vorticity is supported  away from the boundary, taking advantage of the weak interaction between the boundary vorticity and the bulk flow inside the domain. For different proofs of the Caflisch-Sammartino and 
Maekawa results in 2D and 3D, we refer to~\cite{WangWangZhang17, FeiTaoZhang16,FeiTaoZhang18, NguyenNguyen18}. 

The gap between the Sammartino-Caflisch~\cite{SammartinoCaflisch98a,SammartinoCaflisch98b} and the Maekawa~\cite{Maekawa14} results was closed by Kukavica, Vicol, and the first author~\cite{KukVicWan20, KukVicWan20a}, in which we established the strong inviscid limit in the energy norm, for initial data that is only analytic close to the boundary of the domain, and has finite Sobolev regularity in the complement. Later the first author \cite{Wan20} addressed the problem in the 3D case with initial data only analytic near the boundary. 
 Up to now, the class of initial data considered in \cite{KukVicWan20, KukVicWan20a, Wan20} appears to be the largest class of initial data that the strong inviscid limit is known to hold, in the absence of structural or symmetry assumptions. We also mention that these results was extended to a general bounded domain in $\RR^2$ by Bardos, Nguyen, Nguyen and Titi in \cite{BarNguNguTit21}.  Furthermore, the validity of the Prandtl expansion under such kind of class was established in~\cite{KukNguVicWan21} by Kukavica, Nguyen, Vicol, and Wang, which close the inviscid limit problem completely for this functional setting only analytic near the boundary.
 
 Recently in~\cite{GerardVaretMaekawaMasmoudi16, GerMaeMas20}, Gerard-Varet, Maekawa, and Masmoudi proved the stability of Gevrey perturbations in $x$ and Sobolev perturbation in $y$ for shear flows of the Prandtl type, when the Prandtl shear flow is both monotonic and concave.  Lastly, we mention that the vanishing viscosity limit is also known to hold in the presence of certain symmetry assumptions on the initial data, which is maintained by the flow; see e.g.~\cite{BonaWu02,HanMazzucatoNiuWang12,Kelliher09,LopesMazzucatoLopes08,LopesMazzucatoLopesTaylor08,MaekawaMazzucato16,Matsui94,MazzucatoTaylor08,GKFMNL17} and references therein. Also, the very recent works~\cite{GerardVaretMaekawa18,GuoIyer18,GuoIyer18a,Iyer18} establish the vanishing viscosity limit and the validity of the Prandtl expansion for the stationary  Navier-Stokes equation, in certain regimes.

In the previous works, a lot of attention is paid towards the direction of either verifying the convergence of Navier-Stokes system to Euler equations in an energy norm or proving the Prandtl expansion for the Navier-Stokes equations. However, the formation of the boundary layers is not well studied to the knowledge of the authors, which is one of key elements of the inviscid limit problem. From the perspectives of physics, it seems unlikely that the boundary is immediately there if it is absent initially. Intuitively, there should be a process where the magnitude of the vorticity near the boundary is growing to the size of $1/\sqrt{\nu}$ and the width of the layer is spreading out to be proportional the $\sqrt{\nu} $ eventually. In this paper, we verify this conjecture and prove that the growth time scaling is almost $\nu$. One interesting phenomena in our proof is that the size of the vorticity and the width of the layer seem to grow linearly in time. However, according to~\cite{KukVicWan17}, we believe this should not be the optimal result and conjecture that the growing speed should be $1/(T-t)$ if the initial vorticity is huge. 

As mentioned in the beginning, we aim to study the growth mechanism of the boundary layers for initial data that are analytic only near the boundary of the domain, and are Sobolev smooth away from the boundary. Typically our result is more interesting when there is no boundary layer initially. On the other hand, whether the growing boundary layer is monotonic and concave is still open so far, which is directly relevant to the results in~\cite{GerardVaretMaekawaMasmoudi16, GerMaeMas20} where they assume the initial boundary layer is so. Compared with the previous work~\cite{KukVicWan20, KukVicWan20a, Wan20}, the main novelty here is to design a time dependent weight function (see~\eqref{weight}) to capture the involving feature of the boundary layer. However, this new weight is not compatible with the norm we used in the precious work. Modifying the time coefficient in the norm which recovers the norm by integral accordingly is one of the main difficulty in this paper. Also direct estimate in the proof will not give the optimal scaling of time as $\nu$, instead we adjust the time coefficient in the analytic norm to get the almost optimal scaling, see Remark \ref{r}.
\section{The functional setting and main results}

\subsection{Notations}
Throughout this paper, we will use $f_{\xi} \in \mathbb{C}$ to denote the Fourier transform of $f(x,y)$ with respect to the $x$ variable  at frequency $\xi \in \mathbb{Z}$, i.e., $f(x,y)= \sum_{\xi \in \mathbb{Z}} f_{\xi}(y) e^{i x \xi }$. We also write $u_{i,\xi}$ or $(u_i)_{\xi}$ for the Fourier transformation of $u_i$ where $i=1,2$. Let $\mathfrak{Re}z$ and $\mathfrak{Im}z$ be the real part and imaginary part of a complex number $z$ respectively. We define the complex domain $\Omega_{\mu}$ by 
\[
\Omega_{\mu}= \{ z \in \mathbb{C} : 0 \leq \mathfrak{Re}z \leq 1,\ \vert \mathfrak{Im} z \vert \leq \mu \mathfrak{Re}z  \}  \cup \{ z \in \mathbb{C}: 1 \leq \mathfrak{Re}z \leq 1+\mu,\ \ \vert \mathfrak{Im} z \vert \leq 1+ \mu - \mathfrak{Re}z  \}, 
\]
where we assume that $\mu<\mu_0$ for some small constant $\mu_0\in (0, 1/10)$.
For simplicity, we write  $e^{\e_0(1+\mu- z)_{+}|\xi|}$ to denote $e^{\e_0(1+\mu-\mathfrak{Re} z)_{+}|\xi|}$ for a complex number z without causing any confusion. We use $f\lesssim g$ to represent $f \leq Cg$ for some constant $C$.

\subsection{Functional settings}
Define the weight function
\begin{equation}
	\label{weight}
w_t(y)=
\begin{cases}
w\left(y \left(\frac{\nu^2}{t \gamma^2}\right)^{\beta}\right) + \left(1-\frac{\gamma^2 t}{ \nu^2}\right)_{+},\  \ \ \ 0<t \leq \nu^2/\gamma^2\\
w(y),\ \ \ \ \ \ \ \ \ \ \ \ \ \ \ \	\	\	\	\	\  \ \ \ \ \ \ \ \ 	\	\ \ \ \ \ \ \  t>\nu^2/\gamma^2
\end{cases}
,
\end{equation}
where
\begin{equation}
\begin{aligned}
w(y)=
\begin{cases}
\sqrt{\nu},\ &0<y \leq \sqrt{\nu}\\
y,\ &\sqrt{\nu} \leq y \leq 1\\
1,\ &1 \leq y \leq 1+ \mu_0
\end{cases}
\end{aligned}
,
\end{equation}
 $\beta \in (0, 1/4)$, and $\gamma > \gamma_0$ is some sufficiently large constant to be determined. When $t=0$, we set $w_t(y)=1$.
As in~\cite{KukVicWan20}, we introduce a weighted analytic norm
\begin{equation*}
\Vert f \Vert_{\mathcal{L}^{\infty}_{\mu,\nu,t}} = \sup_{y \in \Omega_\mu} w_t(y) |f(y)|
\end{equation*}
for a complex function $f$, where we use $ w_t( y)$ to stand for $ w_t(\mathbb{R}e\  y)$ for the sake of simplicity. For $\e_0 \in (0,1)$  sufficiently small to be determined below, we set
\begin{equation*}
\Vert f \Vert_{X_{\mu,t}} = \sum_{\xi \in \mathbb{Z}}  \Vert e^{\e_0(1+ \mu -y)_+ |\xi|} f_{\xi} \Vert_{\mathcal{L}^{\infty}_{\mu,\nu,t}},
\end{equation*}
and
\begin{equation}\label{X(t)}
\Vert f \Vert_{X(t)}= \sup_{\mu < \mu_0- \gamma \sqrt{t}} \left(\sum_{0 \leq i+j \leq 1} \Vert \partial_x^i (y \partial_y)^j  f\Vert_{X_{\mu,t}} + \sum_{i+j=2} (\mu_0 - \mu - \gamma \sqrt{t})^{1/2+ \a} \Vert \partial_x^i (y \partial_y)^j f \Vert_{X_{\mu,t}}\right),
\end{equation}
where $\a \in (0, 1/2)$. Similarly as in~\cite{KukVicWan20}, we define ${L}^1$ based norms
\begin{equation*}
\Vert f \Vert_{\mathcal{L}^1} = \sup_{0 \leq \theta < \mu} \Vert f \Vert_{L^1(\partial \Omega_{\theta})},
\end{equation*}
\begin{equation*}
\Vert f \Vert_{Y_{\mu}} = \sum_{\xi} \Vert e^{\e_0(1+\mu-y)_+|\xi|} f_{\xi} \Vert_{\mathcal{L}^1},
\end{equation*}
and
\begin{equation*}
\Vert f \Vert_{Y(t)} = \sup_{\mu < \mu_0- \gamma t} \left(\sum_{0 \leq i+j \leq 1} \Vert  \partial_x^i (y \partial_y)^j f \Vert_{Y_{\mu}}+ \sum_{i+j =2} (\mu_0 - \mu - \gamma t)^{\a} \Vert   \partial_x^i (y \partial_y)^j f \Vert_{Y_\mu}\right).
\end{equation*}
We also need a weighted $L^2$ norm
\begin{equation*}
\Vert f\Vert_S^2= \Vert yf \Vert^2_{L^2(y \geq 1/2)}= \sum_{\xi} \Vert yf_{\xi} \Vert_{L^2(y \geq 1/2)}^2,
\end{equation*}
and a weighted version of the Sobolev $H^3$ norm
\begin{equation*}
\Vert f \Vert_{Z} =\sum_{0 \leq i+j \leq 3} \Vert \partial_x^i \partial_y^j f \Vert_S.
\end{equation*}
Furthermore, it is convenient to introduce
\begin{equation*}
\Vert f\Vert_{S_\mu}=\sum_{\xi} \Vert y f_{\xi} \Vert_{L^2(y \geq 1+\mu)}.
\end{equation*}
At last, we define the cumulative time-dependent norm by
\begin{equation*}
\normmm{\omega}_t= \Vert \omega \Vert_{X(t)} + \Vert \omega \Vert_{Y(t)} + \Vert \omega \Vert_{Z}.
\end{equation*}

\subsection{Main results}
In this section, we state our main theorems and the proof will be presented in Section \ref{Proof}.
\begin{Theorem}\label{thm0}
Let $\mu_0>0$ and assume that $\omega_0$ is such that
\begin{equation}
\sum_{i+j \leq 2} \Vert \partial_x^i (y\partial_y)^j \omega_0\Vert_{X_{\mu_0,0}} + \sum_{i+j \leq 2} \Vert (\partial_x)^i (y\partial_y)^j \Vert_{Y_{\mu_0}} + \sum_{i+j\leq 3} \Vert \partial_x^i (y \partial_y)^j \omega_0 \Vert_S \leq M< \infty.
\end{equation}
Then there exists a sufficiently large $\gamma>0$ and a time $T>0$, depending on $M$ and $\mu_0$, such that the solution $\omega$ to the system~\eqref{NS1}-\eqref{NS3} satisfies 
\begin{equation}
\sup_{t \in [0,T]} \normmm{\omega(t)}_t \leq CM.
\end{equation}
\end{Theorem}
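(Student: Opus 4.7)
The overall strategy follows the abstract Cauchy--Kowalewski (ACK) framework developed in~\cite{KukVicWan20, KukVicWan20a}, adapted to the new time-dependent weight $w_t$. First I would rewrite the Navier--Stokes system~\eqref{NS1}--\eqref{NS3} in vorticity form
\begin{equation*}
\partial_t \omega + u \cdot \nabla \omega - \nu \Delta \omega = 0, \qquad u = \nabla^\perp \Delta^{-1} \omega,
\end{equation*}
with the vorticity boundary condition on $\partial \HH$ obtained from the no-slip condition on $u$, and express it in Duhamel form using the half-space heat semigroup together with the single-layer potential that encodes boundary vorticity generation. A Picard iteration then produces approximate solutions whose convergence in $\normmm{\cdot}_t$ on a time interval $[0, T]$ with $T = T(M, \mu_0)$ is obtained by a contraction argument; the main work is the a priori estimate, to which the remainder of the plan is devoted.

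For the propagation of the norm, the target is a bound of ACK type
\begin{equation*}
\normmm{\omega(t)}_t \leq C M + C \int_0^t \frac{\normmm{\omega(s)}_s^2}{(\mu_0 - \mu - \gamma \sqrt{s})^{1/2}}\, ds + (\text{lower-order terms}),
\end{equation*}
in which the shrinking analyticity radii $\mu_0 - \gamma \sqrt{t}$ in $X(t)$ and $\mu_0 - \gamma t$ in $Y(t)$ absorb the derivative loss produced by the transport nonlinearity $u \cdot \nabla \omega$, while the $(\mu_0 - \mu - \gamma \sqrt{t})^{1/2 + \alpha}$ penalty on the second-order conormal derivatives guarantees that the Biot--Savart recovery of $u$ from $\omega$ remains controlled. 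Concretely I would: (i) multiply the vorticity equation by the weight $w_t(y)$ from~\eqref{weight} and track how $\partial_t w_t$, which is supported on $[0, \nu^2/\gamma^2]$, is balanced against the dissipative contribution coming from $\nu \partial_y^2$, using $\beta < 1/4$ and $\gamma$ large; (ii) control the transport term in the $X(t)$ and $Y(t)$ norms via the ACK inequalities associated to the shrinking cones $\Omega_\mu$, with a dyadic-in-frequency decomposition that separates the analytic regime near $\partial \HH$ from the Sobolev regime $y \geq 1/2$ handled by the $Z$-norm energy estimate; and (iii) treat the boundary vorticity creation by exploiting the conormal structure $(y \partial_y)^j$ together with the identity $w_t(0) = \sqrt{\nu} + (1 - \gamma^2 t/\nu^2)_+$, which is precisely the mechanism encoding the formation of the $1/\sqrt{\nu}$-amplitude, $\sqrt{\nu}$-wide layer.

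The principal obstacle, flagged by the authors, is the compatibility of the new time-dependent weight $w_t$ with the analytic norm machinery of~\cite{KukVicWan20}. Since $w_t$ transitions from essentially $1$ at $t = 0$ to the $\sqrt{\nu}$-truncation at $t \sim \nu^2/\gamma^2$, its time derivative is large on this short initial interval and cannot be commuted past the nonlinearity for free. I would estimate $(\partial_t w_t)\, \omega$ against $w_t \omega$ inside the $X_{\mu, t}$ and $Y_\mu$ norms and show, via the self-similar scaling $y(\nu^2/(t\gamma^2))^\beta$, that the resulting bad term is dominated by $\nu |\partial_y(w_t \omega)|^2 / w_t$ once $\gamma$ is chosen large. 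A second, subtler obstacle is the optimal time scale: the naive estimate produces lifespan $T \sim \sqrt{\nu}$, whereas the target is $T$ almost of order $\nu$. This is achieved by retuning the time coefficient in the analytic radius, namely by having $X(t)$ use $\gamma \sqrt{t}$ while its integrated counterpart $Y(t)$ uses $\gamma t$, and then verifying via a Cauchy--Schwarz argument on the time integral that the two scales balance and recover the sharp exponent---this is the modification alluded to in Remark~\ref{r}. Once these two ingredients are in place, Gr\"onwall's inequality closes the a priori estimate and yields $\sup_{t \in [0,T]} \normmm{\omega(t)}_t \leq C M$, completing the proof.
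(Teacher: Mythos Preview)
Your high-level skeleton (Duhamel representation via the Stokes Green's function, shrinking analyticity radii, analytic/Sobolev splitting) matches the paper, but the heart of your plan---step~(i), ``multiply the vorticity equation by $w_t(y)$ and track how $\partial_t w_t$ is balanced against the dissipative contribution from $\nu\partial_y^2$''---is not what the paper does and it is unclear how it could close. The $X_{\mu,t}$ norm is a weighted $L^\infty$ over a complex cone, not an energy, so there is no integration-by-parts mechanism by which $\nu\partial_y^2$ produces a signed term absorbing $(\partial_t w_t)\omega$. The paper never differentiates $w_t$ in time. Instead it stays in the mild formulation and the weight enters only through the ratio $w_t(y)/w_s(z)$ inside the kernel integrals. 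The core technical work (Lemmas~\ref{l1}, \ref{l2}, \ref{X4}) is to show that this ratio, integrated against $H_\xi$ or $R_\xi$, is $O(1)$ uniformly in $\nu$; the key pointwise bound is $w\bigl(y\nu^{2\beta}/(t^\beta\gamma^{2\beta})\bigr)\,e^{-(y-z)^2/(c^2\nu(t-s))}\lesssim\sqrt\nu$ for $z\le \tfrac34 y$, which comes from the explicit shape of $w$, not from any parabolic smoothing.

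The second gap is the boundary contribution. Your sketch handles it via ``the identity $w_t(0)=\sqrt\nu+(1-\gamma^2 t/\nu^2)_+$'', but the actual estimate (Lemma~\ref{X3}) shows that the trace term produces the singular factor
\[
\frac{1}{\sqrt{t-s}}\Bigl(\frac{1}{\sqrt\nu}\,\mathbbm{1}_{\{t\le\nu^2/\gamma^2\}}+\frac{1}{s^\beta}\Bigr)
\]
in front of $\|N(s)\|_{Y_\mu}+\|N(s)\|_{S_\mu}$, plus a term $\bigl(\nu^{-1/2}\mathbbm{1}_{\{t\le\nu^2/\gamma^2\}}+1\bigr)\|N(s)\|_{X_{\mu,s}}$. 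Integrating these against the analyticity-loss factor $(\mu_0-\mu-\gamma\sqrt s)^{-1-\alpha}$ on $[0,t]$ and proving the result is $\lesssim \gamma^{-1/2}(\mu_0-\mu-\gamma\sqrt t)^{-1/2-\alpha}$ (the $I_1$--$I_4$ computations in the Appendix) is precisely where $\beta<1/4$ and the $\sqrt t$ radius in $X(t)$ are used; this is a direct calculus computation, not a Cauchy--Schwarz balance of the $\sqrt t$ and $t$ scales as you propose. Finally, the closing is by a barrier argument---one obtains an algebraic inequality $\sup_s\normmm{\omega(s)}_s\le CM + C\gamma^{-\min\{\alpha,1-2\alpha\}}\sup_s\normmm{\omega(s)}_s^2$ and takes $\gamma$ large---not by Gr\"onwall, since the bound is not in integral form.
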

\begin{Remark}\label{r}
In the above result, the time scaling for the growth procedure is $t\sim \nu^2$, which is far from optimal. 
By redefining the weight function
\begin{equation}
w_t(y)=
\begin{cases}
1 ,& t=0\\
w\left(y \left(\frac{1}{t}\left(\frac{\nu}{ \gamma}\right)^{2^n/(2^n-1)}\right)^{\beta}\right) + \left(1-\left(\frac{\gamma}{ \nu}\right)^{2^n/(2^n-1)}t\right)_{+},\ \ \   & 0< t \leq (\nu/\gamma)^{{2^n}/{2^n-1}}\\
w(y),&  t>(\nu/\gamma)^{2^n/2^n-1}
\end{cases}
\end{equation}
and the analytic norm
\begin{equation}
	\Vert f \Vert_{X(t)}= \sup_{\mu < \mu_0- \gamma \sqrt{t}} \left(\sum_{0 \leq i+j \leq 1} \Vert \partial_x^i (y \partial_y)^j  f\Vert_{X_{\mu,t}} + \sum_{i+j=2} \left(\mu_0 - \mu - \gamma t^{1/2^n}\right)^{1/2+ \a} \Vert \partial_x^i (y \partial_y)^j f \Vert_{X_{\mu,t}}\right),
\end{equation}
following the proof in Section \ref{Proof} and the appendix, we can upgrade the time scaling to $t\sim\nu^{1+\e}$ for arbitrarily small $\e$, which is almost optimal.
 
\end{Remark}
The above result immediately implies the following inviscid limit for 2-D Navier-Stokes equation.
\begin{Theorem}\label{thm1}
Let $\omega_0$ and $T>0$ be as in Theorem \ref{thm0}. Denote by $u^{\nu}$ the solution of the Navier-Stokes equations \eqref{NS1}-\eqref{NS4} with viscosity $\nu>0$. Also denote by $\bar{u}$ the solution of the Euler equations with initial datum $u_0$. Then we have
\begin{equation*}
\lim_{\nu \to 0} \sup_{t \in [0,T]} \Vert u^{\nu}- \bar{u}(t)\Vert_{L^2(\mathbb{H})} =0.
\end{equation*}
\end{Theorem}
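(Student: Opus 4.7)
The plan is to deduce Theorem~\ref{thm1} from Theorem~\ref{thm0} via Kato's criterion for the inviscid limit. Recall from~\cite{Kato84b} that, provided $\bar u$ is sufficiently smooth, the strong convergence $u^\nu\to \bar u$ in $L^\infty(0,T;L^2(\HH))$ is equivalent to the dissipation condition
\[
\lim_{\nu\to 0}\,\nu\int_0^T\!\!\int_{\mathbb{T}\times\{0<y<c\nu\}}|\nabla u^\nu|^2\,dxdydt=0
\]
for some fixed $c>0$. Since $\omega_0$ enjoys the analytic-plus-Sobolev regularity required by Theorem~\ref{thm0}, the 2D Euler flow $\bar u$ is smooth on $[0,T]$ by standard well-posedness, after shrinking $T$ if necessary. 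Hence Kato's equivalence applies and the task reduces to verifying the displayed dissipation bound.

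The first step is to extract pointwise control on the vorticity. Because $X(t)$ is an $L^\infty$-based norm with weight $w_t$, the bound $\normmm{\omega(t)}_t\le CM$ furnished by Theorem~\ref{thm0} immediately yields $|\omega(t,x,y)|\le CM/w_t(y)$. For $t\ge \nu^2/\gamma^2$ one has $w_t(y)=\sqrt{\nu}$ on $0<y\le\sqrt{\nu}$, and hence $|\omega(t,x,y)|\le CM/\sqrt{\nu}$ throughout the Kato strip $\{0<y<c\nu\}\subset\{0<y<\sqrt\nu\}$. On the short residual interval $t\in(0,\nu^2/\gamma^2]$, the extra term $(1-\gamma^2 t/\nu^2)_+$ in~\eqref{weight} only shifts $w_t$ by an $O(1)$ quantity, while the length $O(\nu^2)$ of this time window makes its contribution to the Kato integral automatically negligible.

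The second step is to translate the vorticity estimate into a bound on $|\nabla u^\nu|^2$ in the Kato strip. Using $\div u^\nu=0$ and $\omega=\partial_y u_1^\nu-\partial_x u_2^\nu$, a direct computation gives
\[
|\nabla u^\nu|^2\le C\bigl(|\omega|^2+|\partial_x u^\nu|^2\bigr),
\]
where $|\partial_x u^\nu|^2$ is uniformly bounded in $\nu$ thanks to the $Z$-component of $\normmm{\omega(t)}_t$ (horizontal derivatives of the velocity are controlled away from the boundary by the $H^3$-type bound and near the boundary by Poincar\'e applied to the no-slip condition $u^\nu|_{y=0}=0$). Substituting gives
\[
\nu\int_0^T\!\!\int_{\mathbb{T}}\!\!\int_0^{c\nu}|\nabla u^\nu|^2\,dydxdt\le \nu\cdot\frac{CM^2}{\nu}\cdot c\nu\cdot 2\pi T+O(\nu^2)=O(\nu),
\]
so the Kato criterion holds and strong $L^\infty L^2$ convergence follows.

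The main obstacle is the bookkeeping on the transitional window $t\in(0,\nu^2/\gamma^2]$, where the weight~\eqref{weight} is genuinely time-dependent and the proof of Theorem~\ref{thm0} itself is most delicate; however, its contribution to the Kato integral is $O(\nu^3)$ and does not affect the limit. A secondary point is to verify that the Euler lifespan is at least the $T$ provided by Theorem~\ref{thm0}, which follows from the analyticity near the boundary and the Sobolev regularity in the interior by standard 2D Euler theory and does not introduce any new analytic difficulty beyond what Theorem~\ref{thm0} already handles.
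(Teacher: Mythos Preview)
Your approach via Kato's criterion is correct and arrives at the same conclusion, but the paper's argument is cleaner and avoids the auxiliary step you introduce. The paper verifies the \emph{global} dissipation condition $\nu\int_0^T\int_{\HH}|\nabla u^\nu|^2\to 0$ (which implies Kato's strip version) by invoking the integrated identity $\int_{\HH}|\nabla u^\nu|^2=\int_{\HH}|\omega|^2$, splitting at $y=1/2$, and then bounding the near-boundary piece via $\sqrt{\nu}\lesssim w_t(y)$ and the elementary H\"older estimate
\[
\nu\int_{\{y\le 1/2\}}|\omega|^2\ \lesssim\ \sqrt{\nu}\,\|\omega\|_{X(s)}\|\omega\|_{Y(s)},
\]
while the far-field piece is $\nu\|\omega\|_S^2$. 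This uses the $Y$-norm, not just the $X$-norm, and never needs a pointwise decomposition of $|\nabla u^\nu|^2$.

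By contrast, you restrict to the Kato strip, write $|\nabla u^\nu|^2\le C(|\omega|^2+|\partial_x u^\nu|^2)$, and then must argue that $|\partial_x u^\nu|$ is uniformly bounded in $\nu$. The claim is true, but your stated justification (``Poincar\'e applied to the no-slip condition'') does not deliver a pointwise $L^\infty$ bound on $\partial_x u^\nu$; what actually yields it are the Biot--Savart estimates of the type $\sum_\xi\sup_y|(\partial_x u_1)_\xi|\lesssim \|\partial_x\omega\|_{Y_\mu}+\|\partial_x\omega\|_{S_\mu}$ proved in \cite{KukVicWan20}. You should cite those rather than Poincar\'e. Also, your treatment of the transitional window $t\in(0,\nu^2/\gamma^2]$ is unnecessarily delicate: since $w(\cdot)\ge\sqrt{\nu}$ always, the weight in \eqref{weight} satisfies $w_t(y)\ge\sqrt{\nu}$ for \emph{all} $t>0$, so the pointwise bound $|\omega|\le CM/\sqrt{\nu}$ holds uniformly and no separate accounting for the short window is needed.
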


\section{Preliminaries}
The following properties for the weight function $w(y)$ will be frequently used throughout the proof. For more details see Remark 2.1 in \cite{KukVicWan20}.
\begin{Lemma}[I.~Kukavica, V.~Vicol, and F.~Wang]\label{l0}
The weight function $w(y)$ satisfies the following properties:
\begin{enumerate}
\item
$w(y) \lesssim w(z)$ for $y \leq z$,
\item
$w(y) \lesssim w(z)$ for $0 < y/2 \leq z \leq 1+ \mu_0$,
\item
$\sqrt{\nu} \lesssim w(y) \lesssim 1$ for $y \in [0,1+\mu_0]$,
\item
$y \lesssim w(y)$ for $y \in [0,1+\mu_0]$,
\item
$w(y)e^{-\frac{y}{C \sqrt{\nu}}} \lesssim \sqrt{\nu}$ for $y \in [0, 1+ \mu_0]$ where $C>0 $ is a sufficiently large constant.
\end{enumerate}
\end{Lemma}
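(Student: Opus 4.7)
The plan is to prove all five items by direct case analysis on the three regimes $0<y\le\sqrt\nu$, $\sqrt\nu\le y\le 1$, and $1\le y\le 1+\mu_0$ that appear in the piecewise definition of $w(y)$. Each claim is elementary once the correct region is selected, so the ``proof'' is really a checklist.

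For (1), I would note that $w$ is constant ($=\sqrt\nu$) on $[0,\sqrt\nu]$, linear ($=y$) on $[\sqrt\nu,1]$, and constant ($=1$) on $[1,1+\mu_0]$, with matching values at the breakpoints; hence $w$ is nondecreasing, which is exactly (1). For (3), I would just read off the extreme values: $w$ takes values in $[\sqrt\nu,1]$ on $[0,1+\mu_0]$. For (4), the three cases give respectively $y\le\sqrt\nu=w(y)$, $y=w(y)$, and $y\le 1+\mu_0\lesssim 1=w(y)$, so $y\lesssim w(y)$ throughout.

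For (2), I need to show $w(y)\lesssim w(z)$ whenever $y/2\le z\le 1+\mu_0$. The only subtle case is when $y$ and $z$ lie in different regions. Since $w$ is already monotone by (1), it suffices to handle $z<y$, and the ratio $w(y)/w(z)$ is controlled by considering the boundary transitions: on the middle region $w(y)/w(z)=y/z\le 2$ when $y/2\le z$; crossing the $y=1$ breakpoint multiplies by at most a factor $1+\mu_0$; and crossing the $y=\sqrt\nu$ breakpoint replaces $w(z)=\sqrt\nu$ by $w(y)=y\le 2z=2\sqrt\nu$, which is also bounded. Putting these together gives a uniform constant independent of $\nu$.

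The only genuinely quantitative step is (5). In the first region $y\le\sqrt\nu$ one has $w(y)e^{-y/(C\sqrt\nu)}\le\sqrt\nu$ trivially. In the second region, setting $s=y/\sqrt\nu\ge 1$, one needs $\sqrt\nu\cdot s e^{-s/C}\lesssim\sqrt\nu$, i.e., $s e^{-s/C}\lesssim 1$ for $s\ge 1$; this holds for any fixed $C>0$ because $s\mapsto se^{-s/C}$ is bounded on $[1,\infty)$. In the third region $1\le y\le 1+\mu_0$ and $w(y)=1$, so the claim becomes $e^{-y/(C\sqrt\nu)}\lesssim\sqrt\nu$, or equivalently $y/(C\sqrt\nu)\ge \tfrac12\log(1/\nu)+O(1)$; since $y\ge 1$ and $\nu$ is small, choosing $C$ sufficiently large (independent of $\nu$ but possibly depending on $\mu_0$) makes this valid for all $\nu\le\nu_0$. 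The main (and essentially only) obstacle is keeping track of the implicit constants in (2) and (5) so that they are uniform in $\nu$; once $C$ in (5) is fixed large enough, nothing else in the lemma depends delicately on $\nu$. Since the statement already appears in \cite{KukVicWan20}, one could alternatively simply cite that reference after checking these case analyses.
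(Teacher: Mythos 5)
Your proof is correct; the paper gives no proof of this lemma at all, merely citing Remark 2.1 of \cite{KukVicWan20}, and your elementary three-region case analysis is exactly the verification that reference (and any reasonable proof) performs. Two small remarks, neither affecting correctness: in item (2), when $z\in[\sqrt\nu,1]$ and $y\in[1,1+\mu_0]$ with $z\ge y/2\ge 1/2$, the ratio $w(y)/w(z)=1/z\le 2$, so the transition constant is $2$ rather than $1+\mu_0$; and in item (5), region 3, the bound $e^{-y/(C\sqrt\nu)}\lesssim\sqrt\nu$ actually holds for every fixed $C>0$ (since $\nu^{-1/2}$ dominates $\log(1/\nu)$ as $\nu\to0$), so the ``sufficiently large $C$'' is really only needed to make the implicit constant small, not to make the estimate true.
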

We also recall the following properties for the Green function of 2D Navier-Stokes equation which can be found in Section 3 in \cite{NguyenNguyen18}. 
After taking the Fourier transform in the tangential $x$ variable, the Navier-Stokes equations may be rewritten as 
\begin{equation}\label{NS}
	\begin{aligned}
		\partial_t \omega_{\xi} - \nu \Delta_{\xi} \omega_{\xi} = N_{\xi}\\
		\nu (\partial_y + |\xi|) \omega_{\xi}= B_{\xi},
	\end{aligned}
\end{equation}
for $\xi \in \mathbb{Z}$, where
\begin{equation*}
	N_{\xi}(s,y) = -(u \cdot \nabla \omega)_{\xi}(s,y),
\end{equation*}
\begin{equation*}
	B_{\xi}(s) = (\partial^{-1}(u \cdot \nabla \omega))_{\xi}(s)|_{y=0} = -(\partial_y \Delta^{-1}_{\xi}N_{\xi}(s))|_{y=0},
\end{equation*}
and
\begin{equation*}
	\Delta_{\xi}=-\xi^2 + \partial_y^2.
\end{equation*}
Denoting the Green's function for this system by $G_{\xi}(t,yz)$, we may represent the solution of this system as 
\begin{equation*}
	\begin{aligned}
		\omega_{\xi}(t,y) =& \int_0^{\infty} G_{\xi}(t,y,z) \omega_{0\xi}(z) dz + \int_0^t \int_0^{\infty} G_{\xi}(t-s,y,z) N_{\xi}(s,z) dz ds\\
		&+ \int_0^t G_{\xi}(t-s,y,0) B_{\xi}(s) ds,
	\end{aligned}
\end{equation*}
where $\omega_{0\xi}(z)$ is the Fourier transform of the initial data.
The next Lemma gives an estimate of the Green's function $G_{\xi}$ of the Stokes system.
\begin{Lemma}[\cite{NguyenNguyen18}]
The Green's function $G_{\xi}$ for the system \ref{NS} is given by
\begin{equation*}
G_{\xi}=\tilde{H}_{\xi} + R_{\xi},
\end{equation*}
where 
\begin{equation}\label{kernel representation}
\tilde{H}_{\xi}(t,y,z) = \frac{1}{\sqrt{\nu t}} (e^{-\frac{(y-z)^2}{4 \nu t}} + e^{-\frac{(y+z)^2}{4 \nu t}}) e^{- \nu \xi^2 t}
\end{equation}
is the one dimensional heat kernel for the half space with homogeneous Neumann boundary condition. The residual kernel $R_{\xi}$ has the property $(\partial_y - \partial_z)R_{\xi}(t,y,z)=0$, meaning that it is a function of $y+z$, and it satisfies the bounds
\begin{equation}\label{kernel estimation}
|\partial_z^k R_{\xi}(t,y,z)| \lesssim b^{k+1} e^{- \theta_0 b (y+z)} + \frac{1}{(\nu t)^{(k+1)/2}} e^{-\theta_0 \frac{(y+z)^2}{\nu t}} e^{-\frac{\nu \xi^2 t}{8}}, \  k \in \mathbbm{N}_0,
\end{equation}
where $\theta_0 >0$ is a constant independent of $\nu$. The boundary reminder coefficient $b$ in \eqref{kernel estimation} is given by
\begin{equation*}
b=b(\xi, \nu) =|\xi| + \frac{1}{\sqrt{\nu}}.
\end{equation*}
The implicit constant in \eqref{kernel estimation} depends only on $k$ and $\theta_0$.
\end{Lemma}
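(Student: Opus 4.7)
The plan is to construct $G_\xi$ explicitly in the Fourier/Laplace-transformed setting and read off the pointwise bounds from this representation; since the lemma is cited from \cite{NguyenNguyen18}, my proof would follow the approach there. After the tangential Fourier transform, the Green's function should solve $\partial_t G_\xi - \nu(\partial_y^2 - \xi^2) G_\xi = 0$ on $y > 0$ subject to $\nu(\partial_y + |\xi|) G_\xi|_{y=0} = 0$ and $G_\xi(0,y,z) = \delta(y-z)$. The natural ansatz is to first take $\tilde H_\xi$ to be the Neumann heat kernel in \eqref{kernel representation}, obtained by even reflection across $y=0$ and multiplication by $e^{-\nu\xi^2 t}$. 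This satisfies the PDE and the initial condition, but since $\partial_y \tilde H_\xi|_{y=0}=0$ it produces a nonzero boundary defect $(\partial_y + |\xi|)\tilde H_\xi|_{y=0} = |\xi|\, \tilde H_\xi(t,0,z)$.

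The residual $R_\xi := G_\xi - \tilde H_\xi$ therefore solves the homogeneous parabolic problem with zero initial data and boundary data $\nu(\partial_y + |\xi|) R_\xi|_{y=0} = -\nu|\xi|\tilde H_\xi(t,0,z)$. Taking the Laplace transform in $t$ reduces this to a second-order ODE in $y$, whose bounded solution is $\hat R_\xi(\lambda, y, z) = A(\lambda,z)\, e^{-\mu(\lambda) y}$ with $\mu(\lambda) = \sqrt{\lambda/\nu + \xi^2}$. A direct calculation shows that the Laplace transform of $\tilde H_\xi(t,0,z)$ is proportional to $\mu(\lambda)^{-1}e^{-\mu(\lambda) z}$, so enforcing the transformed boundary condition forces $A(\lambda,z) \propto e^{-\mu(\lambda) z}$. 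In particular $\hat R_\xi$ depends on $y,z$ only through the combination $y+z$, and hence so does $R_\xi$; this immediately gives the structural identity $(\partial_y - \partial_z) R_\xi = 0$ claimed in the lemma.

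For the pointwise bounds \eqref{kernel estimation}, invert the Laplace transform by deforming the Bromwich contour around the branch cut of $\mu(\lambda)$ on the negative real axis, then analyze the resulting contour integral asymptotically. Each differentiation $\partial_z^k$ brings down a factor of $-\mu(\lambda)$, which accounts for the prefactors $b^{k+1}$ or $(\nu t)^{-(k+1)/2}$ in the two regimes. Two regimes govern the decay in $y+z$: when $\nu t \lesssim (y+z)^2$, a saddle-point analysis produces the Gaussian bound $(\nu t)^{-(k+1)/2} e^{-\theta_0 (y+z)^2/(\nu t)} e^{-\nu\xi^2 t/8}$; when $\nu t \gtrsim 1/b^2$ with $b = |\xi| + 1/\sqrt{\nu}$, the slowest exponential $e^{-\mu(\lambda)(y+z)}$ retained after the deformation controls the bound and yields the boundary-layer estimate $b^{k+1} e^{-\theta_0 b (y+z)}$. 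The main obstacle is matching these two regimes uniformly and identifying the correct rate $b = |\xi| + 1/\sqrt{\nu}$, which combines the tangential oscillation and the viscous boundary-layer scale, as the sharp exponential decay rate in the long-time regime; this requires careful bookkeeping of the branch of $\mu(\lambda)$ and of the residual factor $(\mu(\lambda)-|\xi|)^{-1}$ that arises from inverting the boundary condition.
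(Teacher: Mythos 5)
This lemma is cited from \cite{NguyenNguyen18} without proof, so there is no internal argument to compare against; your outline does match the derivation one expects to find there: subtract the Neumann heat kernel, Laplace-transform the residual problem to a one-dimensional ODE in $y$, observe that $\hat R_\xi$ is proportional to $e^{-\mu(\lambda)(y+z)}$ (hence $R_\xi$ is a function of $y+z$, giving $(\partial_y-\partial_z)R_\xi=0$), and read off pointwise bounds from a deformed Bromwich contour. That scaffolding is correct.

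The genuine gap is your treatment of the factor $(\mu(\lambda)-|\xi|)^{-1}$, which you flag as requiring ``careful bookkeeping'' but do not resolve — and it is exactly where the claim is delicate. This factor is a simple pole at $\lambda=0$, corresponding to the zero mode $e^{-|\xi| y}$ of the Stokes generator with the Robin boundary condition $(\partial_y+|\xi|)\cdot\big|_{y=0}=0$. If you actually carry the Laplace inversion through (using $\mathcal L^{-1}\big[e^{-b\sqrt{s}}/(\sqrt{s}(\sqrt{s}-a))\big](t)=e^{a^2t-ab}\,\mathrm{erfc}\big(\tfrac{b}{2\sqrt{t}}-a\sqrt{t}\big)$ with $a=|\xi|$, $b=y+z$, $s=\mu^2$) you obtain the closed form
\begin{equation*}
R_\xi(t,y,z)=|\xi|\,e^{-|\xi|(y+z)}\,\mathrm{erfc}\!\left(\tfrac{y+z}{2\sqrt{\nu t}}-|\xi|\sqrt{\nu t}\right)
\end{equation*}
up to a harmless Gaussian correction coming from the $\sqrt{4\pi}$ normalization of $\tilde H_\xi$; one checks directly that this satisfies $(\partial_y+|\xi|)R_\xi\big|_{y=0}=-|\xi|\tilde H_\xi(t,0,z)$. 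As $t\to\infty$ the $\mathrm{erfc}$ tends to $2$, so $R_\xi\to 2|\xi|e^{-|\xi|(y+z)}$, which is precisely the residue at $\lambda=0$. For $|\xi|\ll 1/\sqrt{\nu}$ this limit is \emph{not} dominated by $b\,e^{-\theta_0 b(y+z)}$ (there $\theta_0 b>|\xi|$ forces the ratio to blow up with $y+z$), while the Gaussian term has vanished. So the two-regime matching you describe does not close over all $t>0$; the estimate \eqref{kernel estimation} should be read on a finite time window (which is all the present paper uses), or the $\lambda=0$ contribution must be cancelled against the compatibility of the data $\int_0^\infty e^{-|\xi| z}\omega_{0,\xi}(z)\,dz=0$. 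Your proposal needs to say which of these is being invoked; without that, the claimed uniform bound with $b=|\xi|+1/\sqrt{\nu}$ cannot be established along the path you sketch.
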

\begin{Remark}
Based on \eqref{kernel estimation}, the residual kernel $R_{\xi}$ satisfies
\begin{equation}\label{kernel estimation 2}
|(y \partial_y)^k R_{\xi}(t,y,z)| \lesssim b e^{-\frac{\theta_0}{2}b(y+z)} + \frac{1}{\sqrt{\nu t}} e^{-\frac{\theta_0}{2}\frac{(y+z)^2}{\nu t}} e^{-\frac{\nu \xi^2 t}{8}},
\end{equation}
for $k \in \{ 0,1,2 \}$, point-wise in $y,z \geq 0$.
\end{Remark}

\section{Estimate of the X-analytic norm for $t \lesssim \nu^2/\gamma^2$}
In this section, we assume $t\leq \nu^2/\gamma^2$, and thus
\begin{equation*}
w_t(y)=w\left(y \left(\frac{\nu^2}{t \gamma^2}\right)^{\beta}\right) + \left(1-\frac{\gamma^2 t}{ \nu^2}\right)_{+}.
\end{equation*}
\begin{Lemma}\label{l1}
Assume that $\mu$ and $\tilde{\mu}$ satisfies $0< \mu < \tilde{\mu} < \mu_0 - \gamma \sqrt{s}$, $\tilde{\mu}- \mu \geq \frac{1}{c} (\mu_0-\mu - \gamma \sqrt{s} )$ for some constant $c \geq 1$. Then we have
\begin{equation}
\begin{aligned}
&\left\Vert e^{\e_0(1+ \mu -y)_{+}|\xi|} (y \partial y) \int_0^{\infty} H_{\xi}(t-s,y,z) N_{\xi}(s,z) dz \right\Vert_{\mathcal{L}_{\mu, \nu, t}^{\infty}}\\
 \lesssim& \left\Vert e^{\e_0(1+\mu-y)_{+}|\xi|} N_{\xi}(s,z) \right\Vert_{\mathcal{L}_{\tilde{\mu},\nu,s}^{\infty}} +  \left\Vert e^{\e_0(1+\mu-y)_{+}|\xi|} (z \partial z)N_{\xi}(s,z) \right\Vert_{\mathcal{L}_{\tilde{\mu},\nu,s}^{\infty}}\\& + \frac{1}{\sqrt{\mu_0-\mu -\gamma \sqrt{s}}} \left\Vert  \partial_z N_{\xi}(s,z) \right\Vert_{L^2(z \geq 1+ \tilde{\mu})}.
\end{aligned}
\end{equation}
\end{Lemma}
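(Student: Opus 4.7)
The plan is pointwise in $y\in\Omega_\mu$ and in the tangential frequency $\xi$: move $(y\partial_y)$ inside the integral and estimate the resulting weighted heat-kernel convolution before taking the supremum that recovers the $\mathcal{L}^{\infty}_{\mu,\nu,t}$ norm. The analytic exponential weight is swallowed by writing
\[
e^{\e_0(1+\mu-y)_+|\xi|}/e^{\e_0(1+\tilde\mu-z)_+|\xi|}\leq e^{\e_0(\tilde\mu-\mu)|\xi|}e^{\e_0|y-z||\xi|}
\]
and completing the square against the Gaussian $e^{-(y-z)^2/(4\nu(t-s))}$ inside $\tilde H_\xi$: the extra $e^{\e_0|y-z||\xi|}$ costs only $e^{c\e_0^2\nu(t-s)\xi^2}$, which for $\e_0$ small is absorbed by the dissipation factor $e^{-\nu(t-s)\xi^2}$, and the $(y+z)$ branch is handled analogously.

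I would then split $y\partial_y \tilde H_\xi(t-s,y,z) = (y-z)\partial_y \tilde H_\xi + z\partial_y \tilde H_\xi$ and handle each piece separately. For the $(y-z)\partial_y$ piece, explicit differentiation of $\tilde H_\xi$ yields a factor $(y-z)(y\pm z)/(\nu(t-s))$ multiplying the Gaussian; the quantity $(y\pm z)/\sqrt{\nu(t-s)}$ is absorbed into the Gaussian at the cost of a worse exponential constant, leaving a heat-kernel-like object with uniformly bounded $L^1_z$ norm. A direct pointwise estimate against $\|N_\xi\|_{\mathcal{L}^{\infty}_{\tilde\mu,\nu,s}}$, combined with the weight comparison $w_t(y)/w_s(z)\lesssim 1$ valid in the Gaussian range $|y-z|\lesssim\sqrt{\nu(t-s)}$, yields the first RHS term.

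For the $z\partial_y$ piece, I use $\partial_y h(t-s,y\mp z)=\mp\partial_z h(t-s,y\mp z)$ and integrate by parts in $z$, treating $[0,1+\tilde\mu]$ and $[1+\tilde\mu,\infty)$ differently. The Neumann symmetry $h(t-s,y-0)=h(t-s,y+0)$ makes the $z=0$ boundary contribution cancel. On $[0,1+\tilde\mu]$ the interior IBP produces
\[
\int_0^{1+\tilde\mu}[h(t-s,y-z)-h(t-s,y+z)]\bigl[N_\xi+(z\partial_z)N_\xi\bigr]\,dz,
\]
which is directly bounded by the first two RHS norms. On $[1+\tilde\mu,\infty)$ I instead use $y\,\partial_y \tilde H_\xi = \partial_z(-y\,h(t-s,y-z)+y\,h(t-s,y+z))$ and a single IBP to produce $y\int_{1+\tilde\mu}^\infty[h(t-s,y-z)-h(t-s,y+z)]\,\partial_z N_\xi\,dz$, which I estimate by Cauchy--Schwarz in $z$ against $\|\partial_z N_\xi\|_{L^2(z\geq 1+\tilde\mu)}$. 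The boundary terms at $z=1+\tilde\mu$ produced by the two IBPs combine into one contribution proportional to $N_\xi(s,1+\tilde\mu)$, controlled by $\|N_\xi\|_{\mathcal{L}^{\infty}_{\tilde\mu,\nu,s}}$ times a Gaussian factor of argument $\tilde\mu-\mu$, hence absorbed into the first RHS term. The advertised $(\mu_0-\mu-\gamma\sqrt s)^{-1/2}$ prefactor emerges from $\|h(t-s,y\pm\cdot)\|_{L^2([1+\tilde\mu,\infty))}$: since $|y-z|\geq\tilde\mu-\mu$ on that set, the Gaussian decay $e^{-c(\tilde\mu-\mu)^2/\nu(t-s)}$ combined with the $(\nu(t-s))^{-1/2}$ prefactor, after optimizing in $\nu(t-s)$, yields $(\tilde\mu-\mu)^{-1/2}\lesssim(\mu_0-\mu-\gamma\sqrt s)^{-1/2}$ by the hypothesis $\tilde\mu-\mu\geq\frac1c(\mu_0-\mu-\gamma\sqrt s)$.

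The main obstacle is the weight comparison $w_t(y)/w_s(z)\lesssim 1$ within the Gaussian range $|y-z|\lesssim\sqrt{\nu(t-s)}$. In the regime $t,s\leq\nu^2/\gamma^2$ of this section, $w_t(y)=w(y(\nu^2/(t\gamma^2))^\beta)+(1-\gamma^2 t/\nu^2)_+$, so its effective inner scale is far below $\sqrt\nu$ and the naive bound $w(y)\geq\sqrt\nu$ from Lemma~\ref{l0} is insufficient. A careful case analysis based on the relative sizes of $y,z,\sqrt\nu,\sqrt{\nu(t-s)}$, and the rescaling factor $(\nu^2/(t\gamma^2))^\beta$ is needed; the essential input is that $\sqrt{\nu(t-s)}\leq\nu^{3/2}/\gamma$ is microscopic, so that after rescaling, the arguments of $w_t(y)$ and $w_s(z)$ remain comparable. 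Taking $\gamma$ large absorbs the residual constants from both this weight comparison and the completion of square in the analytic exponential.
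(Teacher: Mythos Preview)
Your overall structure is sound and runs parallel to the paper's, but with a genuinely different decomposition: you split $y\partial_y=(y-z)\partial_y+z\partial_y$ algebraically, whereas the paper introduces a smooth cutoff $\phi(z/y)$ (equal to $1$ on $[0,1/2]$, vanishing on $[3/4,\infty)$) and integrates by parts only on the region $z\geq y/2$. Both approaches must, in the end, control the same object
\[
\sup_{y\in\Omega_\mu}\int_0^{1+\tilde\mu}\frac{w_t(y)}{w_s(z)}\,\frac{1}{\sqrt{\nu(t-s)}}\,e^{-(y-z)^2/(C\nu(t-s))}\,dz,
\]
and this is where your sketch has a real gap.

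You claim the weight ratio satisfies $w_t(y)/w_s(z)\lesssim1$ ``in the Gaussian range $|y-z|\lesssim\sqrt{\nu(t-s)}$'' and propose to justify it by noting $\sqrt{\nu(t-s)}\leq\nu^{3/2}/\gamma$ is microscopic. Even granting this in-range bound, the Gaussian has a tail: for $|y-z|\gg\sqrt{\nu(t-s)}$ the ratio $w_t(y)/w_s(z)$ can be as large as $1/\sqrt\nu$ (take $y\sim1$, $z\sim0$, $s$ close to $\nu^2/\gamma^2$), and the bare Gaussian decay integrated over that tail gives $O(1)$, not $O(\sqrt\nu)$. Taking $\gamma$ large does not help here; it only shrinks the Gaussian scale further. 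The paper avoids this by a two-region split tied to the \emph{ratio} $z/y$ rather than to $|y-z|$: when $z\geq y/2$ one has $w_t(y)/w_s(z)\lesssim1$ directly from the doubling property of $w$; when $z\leq 3y/4$ one has $y-z\geq y/4$, so that $e^{-(y-z)^2/(C\nu(t-s))}\leq e^{-y^2/(16C\nu(t-s))}\leq e^{-y/(4\sqrt{C\nu t})}$, and then the pointwise bound $w(Y)\lesssim\sqrt\nu\,e^{Y/(C'\sqrt\nu)}$ of Lemma~\ref{l0}(5) lets one absorb $w_t(y)$ \emph{entirely} into this Gaussian decay, leaving the bound $w_t(y)e^{-(y-z)^2/(C\nu(t-s))}\lesssim\sqrt\nu\lesssim w_s(z)$ uniformly over the tail. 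Your ``case analysis on relative sizes'' would ultimately have to reproduce exactly this mechanism; the comparability of rescaled arguments that you invoke is neither sufficient (because of the tail) nor necessary (the paper never uses it).

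The remaining parts of your proof --- the IBP producing $(z\partial_z)N_\xi$, the matching of boundary terms at $z=1+\tilde\mu$, and the Cauchy--Schwarz argument on $[1+\tilde\mu,\infty)$ yielding the $(\tilde\mu-\mu)^{-1/2}$ factor --- are all correct and essentially identical to the paper's treatment of $I_3$ and $I_4$.
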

\begin{proof}
Let $\phi: \mathbb{R}^+ \to \mathbb{R}^+$ be a smooth, non-increasing fucntion such that $\phi(x)=1$ for $x \in [0, 1/2]$, and $\phi=0$ when $x \geq 3/4$. For $y \in \Omega _\mu$,  by integration by parts, we have
\begin{equation*}
\begin{aligned}
&(y \partial y) \int_0^{\infty} \tilde{H}_{\xi}(t-s,y,z) N_{\xi}(s,z) dz \\
=&-y \int_0^{\infty} \partial_z \tilde{H}_{\xi}(t-s,y,z) N_{\xi}(s,z) dz \\
=&-y \int_0^{\frac{3}{4}y} \phi\left(\frac{z}{y}\right)\partial_z \tilde{H}_{\xi}(t-s,y,z) N_{\xi}(s,z) dz - \int_0^{\infty} \phi'\left(\frac{z}{y}\right) H_{\xi}(t-s,y,z) N_{\xi}(s,z) dz\\
&+ y \int_{\frac{1}{2}y}^{1+\mu} \left(1- \phi\left(\frac{z}{y}\right)\right) H_{\xi}(t-s,y,z) \partial_z N_{\xi}(s,z) dz 
+ y \int_{1+\mu}^{\infty} H_{\xi} (t-s,y,z) \partial_z N_{\xi}(s,z) dz\\
=& I_1 + I_2 + I_3 + I_4.
\end{aligned}
\end{equation*}
 \textbf{(1) Estimate of $\sup_{y \in \Omega_{\mu}} |e^{\e_0(1+\mu-y)_{+}|\xi|} w_t(y) I_1|$:}\\
In this case, $s<t$ and $z < \frac{3}{4}y <y$ hold and we have
\begin{equation*}
\begin{aligned}
 &\left|e^{\e_0(1+\mu-y)_{+}} w_t(y) I_1\right|\\
 \lesssim& \left|\int_0^{\frac{3}{4t}y}|e^{\e_0(1+\mu-z)_{+}} e^{\e_0(z-y)_{+}|\xi|} y \partial_y H_{\xi}(t-s,y,z) \frac{w_t(y)}{w_s(z)} w_s(z) N_{\xi}(s,z) dz\right|\\
\lesssim& \left|\int_0^{\frac{3s}{4t}y} y \partial_y H_{\xi} (t-s,y,z) \frac{w_t(y)}{w_s(z)} dz\right| \cdot \Vert e^{\e_0(1+\mu-y)_{+}} N_{\xi }(s,z)  \Vert_{\mathcal{L}^{\infty}_{\mu, \nu, t}}.
\end{aligned}
\end{equation*}
\textbf{(2) Estimate of $\sup_{y \in \Omega_{\mu}} |e^{\e_0(1+\mu-y)_{+}|\xi|} w_t(y) I_2|$:}\\
For any $y \in \Omega_{\mu}$, we obtain
\begin{equation*}
\begin{aligned}
&\left|e^{\e_0(1+\mu-y)_{+}|\xi|} w_t(y) I_2\right|\\
=& \left|e^{\e_0(1+\mu-y)_{+}|\xi|} w_t(y)\int_0^{\infty} \phi'(\frac{z}{y}) H_{\xi}(t-s,y,z) N_{\xi}(s,z) dz \right|\\
\lesssim& \left\lvert\int_{\frac{1}{2}y}^{\frac{3}{4}y} e^{\e_0(1+\mu-z)_{+} |\xi|} e^{\e_0(z-y)_{+}|\xi|} \frac{w_t(y)}{w_s(z)} H_{\xi}(t-s,y,z) w_s(z) N_{\xi}(s,z) dz\right\rvert\\
\lesssim& \int_0^{\frac{3}{4}y} \frac{w_t(y)}{w_s(z)} H_{\xi}(t-s,y,z) dz  \cdot \left\Vert e^{\e_0(1+\mu -z)_{+}|\xi|} N_{\xi}(s,z)\right\Vert_{\mathcal{L}^{\infty}_{\mu, \nu,s}}.
\end{aligned}
\end{equation*}
To proceed, we derive an estimate of the following term:
\begin{equation}\label{target1}
\left\Vert \mathbbm{1}_{\{0 \leq y \leq 1+\mu\}} \mathbbm{1}_{\{0 \leq z \leq \frac{3}{4}y\}} ( |H_{\xi} (t-s,y,z) + |y \partial_y H_{\xi}(t-s,y,z)|)\frac{w_t(y)}{w_s(z)} \right\Vert_{L^{\infty}_y L^1_z},
\end{equation}
where 
\begin{equation*}
y\partial_y H_{\xi}(t-s,y,z) = \frac{y(y-z)}{2 \nu (t-s)} \frac{1}{\sqrt{\nu (t-s)}} e^{-\frac{(y-z)^2}{4 \nu (t-s)}} e^{-\nu \xi^2 (t-s)}.
\end{equation*}
Since $0 \leq z \leq \frac{3}{4} y$ in equation \eqref{target1}, we have $y \leq 4(y-z)$ and
\begin{equation*}
\begin{aligned}
y \partial_y H_{\xi}(t-s, y,z) \lesssim& \frac{(y-z)^2}{\nu (t-s)} \frac{1}{\sqrt{\nu(t-s)}} e^{-\frac{(y-z)^2}{4 \nu (t-s)}} e^{-\nu \xi^2 (t-s)}\\
\lesssim& \frac{1}{\sqrt{\nu(t-s)}} e^{-\frac{(y-z)^2}{8 \nu (t-s)}} e^{-\nu \xi^2 (t-s)}.
\end{aligned}
\end{equation*}
Therefore, the quantity in \eqref{target1} is bounded by 
\begin{equation*}
\begin{aligned}
&\sup_{0 \leq y \leq 1+\mu} \int_0^{\frac{3}{4} y} \frac{w_t(y)}{w_s(z)} \frac{1}{\sqrt{\nu(t-s)}} \Big( e^{-\frac{(y-z)^2}{4 \nu (t-s)}} +  e^{-\frac{(y-z)^2}{8 \nu (t-s)}} \Big) dz\\
\lesssim& \sup_{0 \leq y \leq 1+\mu}  \int_0^{\frac{3 }{4 } y} \frac{w_t(y)}{w_s(z)} \frac{1}{\sqrt{\nu(t-s)}}   e^{-\frac{(y-z)^2}{8 \nu (t-s)}}  dz.
\end{aligned}
\end{equation*} 
We are now going to estimate the term $w_t(y)/ w_s(z)$. Recall that 
\begin{equation*}
w(y) \lesssim \sqrt{\nu} e^{\frac{y}{C \sqrt{\nu}}}
\end{equation*}
for some sufficiently large $C$. Setting $4c <C$ and noticing that $z < \frac{3}{4}y$, we arrive at 
\begin{equation*}
\begin{aligned}
w\left(y\frac{\nu^{2\beta}}{t^\beta  \gamma^{2\beta}}\right) e^{-\frac{\left(y - z\right)^2}{c^2\nu \left(t-s\right)}} \leq& w\left(y \frac{\nu^{2\beta}}{t^\beta \gamma^{2\beta}}\right) e^{-\frac{y^2}{16c^2\nu\left(t-s\right)}}
\leq w\left(y \frac{\nu^{2\beta}}{t^{\beta} \gamma^{2\beta}}\right) e^{-\frac{y^2}{16c^2 \nu t}}\\
\leq& w\left(y \frac{\nu^{2\beta}}{t^{\beta} \gamma^{2\beta}}\right) e^{-\frac{y}{4c \sqrt{\nu t} }}
\lesssim \sqrt{\nu} e^{\frac{y \nu^{2\beta}}{C \sqrt{\nu} t^\beta \gamma^{2\beta}}}e^{-\frac{y}{4c \sqrt{\nu t} }}\\
\lesssim& \sqrt{\nu} e^{\frac{y }{C \sqrt{\nu} \gamma^{2\beta} t^\beta}}e^{-\frac{y}{4c \sqrt{\nu t}}}.
\end{aligned}
\end{equation*}
Since $\beta \in (0, 1/2)$ and $\gamma> \gamma_0$, for $t \leq T$, there is 
\begin{equation*}
 e^{\frac{y }{C \gamma^{2\beta} \sqrt{\nu} t^\beta}}e^{-\frac{y}{4c \sqrt{\nu t}}} \lesssim e^{\frac{y }{C \gamma_0^{2\beta} \sqrt{\nu} t^{\beta}}}e^{-\frac{y}{4c \sqrt{\nu t}}} \lesssim1
\end{equation*}
for $\nu$ arbitrary small.
Therefore,
\begin{equation}\label{eq1}
w\left(y\frac{\nu^{2\beta}}{t^\beta \gamma^{2\beta}}\right) e^{-\frac{(y - z)^2}{c^2\nu (t-s)}} \lesssim \sqrt{\nu} \lesssim w\left(z\frac{\nu^{2\beta}}{t^\beta \gamma^{2\beta}}\right)
\end{equation}
holds.
We now consider two cases. 
\begin{enumerate}
\item When $s>\nu^2/2\gamma^2$, we have
\begin{equation*}
\begin{aligned}
\frac{w_t(y)}{w_s(z)} = \frac{w\left(y \frac{\nu^{2\beta}}{t^\beta \gamma^{2\beta}}\right)+\left(1-\frac{\gamma^2 t}{\nu^2}\right)_+}{w\left(z \frac{\nu^{2\beta}}{s^\beta \gamma^{2\beta}}\right)+\left(1-\frac{\gamma^2 s}{\nu^2}\right)_+} \leq& \frac{w\left(y \frac{\nu^{2\beta}}{t^\beta \gamma^{2\beta}}\right)}{w\left(z \frac{\nu^{2\beta}}{s^\beta \gamma^{2\beta}}\right)} + \frac{\left(1-\frac{\gamma^2 t}{\nu^2}\right)_+}{w\left(z \frac{\nu^{2\beta}}{s^\beta \gamma^{2\beta}}\right)+\left(1-\frac{\gamma^2 s}{\nu^2}\right)_+}\\
 \leq&  \frac{w\left(y \frac{\nu^{2\beta}}{t^\beta \gamma^{2\beta}}\right)}{w\left(z \frac{\nu^{2\beta}}{s^\beta \gamma^{2\beta}}\right)} + 1.
\end{aligned}
\end{equation*}
By equation \eqref{eq1}, it is easy to get
\begin{equation*}
\frac{w\left(y \frac{\nu^{2\beta}}{t^\beta \gamma^{2\beta}}\right)}{w\left(z \frac{\nu^{2\beta}}{s^\beta \gamma^{2\beta}}\right)}  \leq e^{\frac{(y - z)^2}{c^2\nu (t-s)}}.
\end{equation*}
Imposing $c>4$,  the term \eqref{target1} is bounded by
\begin{equation*}
\begin{aligned}
&\sup_{0 \leq y \leq 1+\mu} \int_0^{\frac{3}{4}y} \frac{w_t(y)}{w_s(z)} \frac{1}{\sqrt{\nu (t-s)}} e^{-\frac{(y-z)^2}{8\nu(t-s)}} dz\\
\lesssim& \sup_{0 \leq y \leq 1+\mu} \int_0^{\frac{3}{4}y}  \frac{1}{\sqrt{\nu (t-s)}} e^{-\frac{(y-z)^2}{16\nu(t-s)}} dz.
\end{aligned}
\end{equation*}
Letting $\frac{y-z}{\sqrt{t-s}}=u$, we obtain
\begin{equation*}
\begin{aligned}
\sup_{0 \leq y \leq 1+\mu} \int_0^{\frac{3}{4}y} \frac{w_t(y)}{w_s(z)} \frac{1}{\sqrt{\nu (t-s)}} e^{-\frac{(y-z)^2}{8\nu(t-s)}} dz\lesssim& \int^{\frac{y}{\sqrt{t-s}}}_{\frac{1-\frac{3}{4}}{\sqrt{t-s}}y} e^{-\frac{u^2}{16\nu}} \frac{1}{\sqrt{\nu}} du\\
\lesssim& \frac{3}{4} \frac{y}{\sqrt{t-s}}  e^{-\frac{1}{16\nu}(\frac{1-\frac{3}{4}}{\sqrt{t-s}})^2 y^2} \frac{1}{\sqrt{\nu}}
\lesssim1.
\end{aligned}
\end{equation*}
Therefore, we have
\begin{equation*}
\sup_{0 \leq y \leq 1+\mu} \int_0^{\frac{3}{4}y} \frac{w_t(y)}{w_s(z)}  \frac{1}{\sqrt{\nu (t-s)}} e^{-\frac{(y-z)^2}{8\nu(t-s)}} dz \lesssim 1.
\end{equation*}

\item When $s<\nu^2/2\gamma^2$, there is $w_s(z) \geq 1/2$ and thus
\begin{equation*}
\frac{w_t(y)}{w_s(z)} \lesssim 1.
\end{equation*}
Therefore, it holds that
\begin{equation*}
\begin{aligned}
\sup_{0 \leq y \leq 1+\mu} \int_0^{\frac{3}{4}y} \frac{w_t(y)}{w_s(z)} \frac{1}{\sqrt{\nu (t-s)}} e^{-\frac{(y-z)^2}{8 \nu (t-s)}} dz
\lesssim&\sup_{0 \leq y \leq 1+\mu} \int_0^{\frac{3}{4}y}  \frac{1}{\sqrt{\nu (t-s)}} e^{-\frac{(y-z)^2}{8 \nu (t-s)}} dz\\
\lesssim& \sup_{0 \leq y \leq 1+\mu} \frac{3}{4} \frac{y}{\sqrt{\nu (t-s)}} e^{-\frac{y^2}{128 \nu (t-s)}} dz \lesssim 1.
\end{aligned}
\end{equation*}
\end{enumerate}
In summary, we have
\begin{equation*}
\Vert e^{\e_0(1+\mu-y)_+|\xi|}(I_1+ I_2) \Vert_{\mathcal{L}^{\infty}_{\mu, \nu, t}} \lesssim \Vert  e^{\e_0(1+\mu-y)_+|\xi|} N_{\xi}(s)\Vert_{\mathcal{L}^{\infty}_{\mu, \nu, s}}.
\end{equation*}
\textbf{(3)  Estimate of $\sup_{y \in \Omega_{\mu}} |e^{\e_0(1+\mu-y)_{+}|\xi|} w_t(y) I_3|$:} \\
Recalling the definition of $I_3$, we have
\begin{equation*}
\begin{aligned}
&\sup_{y \in \Omega_{\mu}} |e^{\e_0(1+\mu-y)_{+}|\xi|} w_t(y) I_3|\\
&\quad =\sup_{y \in \Omega_{\mu}} \left| \int_{\frac{1}{2}y}^{1+\mu} \frac{w_t(y)}{w_s(z)} \frac{y}{z} \left(1- \phi\left(\frac{z}{y}\right)\right) H_{\xi}(t-s,y,z) e^{\e_0(z-y)_+|\xi|}
 e^{\e_0(1+\mu-z)_{+}|\xi|}\partial_z N_{\xi}(s,z)z w_s(z)dz \right|.
\end{aligned}
\end{equation*}
In this case, there is $z> \frac{1}{2}y$ which implies $y/z < 2$, and thus 
\begin{equation*}
w(y\frac{\nu^{2\beta}}{t^\beta \gamma^{2\beta}}) \lesssim w(2z \frac{\nu^{2\beta}}{s^\beta \gamma^{2\beta}}) \lesssim w(z \frac{\nu^{2\beta}}{s^\beta \gamma^{2\beta}})
\end{equation*}
holds.
Therefore, we obtain
\begin{equation*}
\begin{aligned}
&\sup_{y \in \Omega_{\mu}} |e^{\e_0(1+\mu-y)_{+}|\xi|} w_t(y) I_3|\\
\lesssim&\sup_{y \in \Omega_{\mu}}  \int_{\frac{1}{2}y}^{1+\mu} \frac{w_t(y)}{w_s(z)}  (1- \phi(\frac{z}{y})) H_{\xi}(t-s,y,z) e^{\e_0(z-y)_+|\xi|}dz\\
&\ \ \ \ \ \ \ \ \ \ \ \ \ \ \ \ \times \Vert e^{\e_0(1+\mu-z)_{+}|\xi|}z\partial_z N_{\xi}(s,z) \Vert_{\mathcal{L}^{\infty}_{\mu, \nu,s}}\\
\lesssim&\sup_{y \in \Omega_{\mu}} \int_{\frac{1}{2}y}^{1+\mu} \frac{w_t(y)}{w_s(z)} H_{\xi}(t-s,y,z) e^{\e_0(z-y)_+|\xi|}dz \Vert e^{\e_0(1+\mu-z)_{+}|\xi|}z\partial_z N_{\xi}(s,z) \Vert_{\mathcal{L}^{\infty}_{\mu, \nu,s}}.
\end{aligned}
\end{equation*}
We are done with the $I_3$ term once we have the estimate of the quantity 
\begin{equation}\label{target2}
 \left\Vert \frac{w_t(y)}{w_s(z)} H_{\xi}(t-s,y,z) e^{\e_0(z-y)_+|\xi|}\right\Vert_{L^{\infty}_y L^1_z}.
\end{equation}
Note that
\begin{equation*}
\frac{w_t(y)}{w_s(z)} \lesssim \frac{w(y \frac{\nu^{2\beta}}{t^\beta \gamma^{2\beta}})}{w(z \frac{\nu^{2\beta}}{s^\beta \gamma^{2\beta}} )} + \frac{(1-\frac{t\gamma^2}{\nu^2})_+}{(1-\frac{s \gamma^2}{\nu^2})_+} \lesssim 1
\end{equation*}
and
\begin{equation*}
e^{\e_0(z-y)_+|\xi|} \leq e^{\frac{\e_0(z-y)^2}{2\nu (t-s)}} e^{\frac{1}{2}|\xi|^2 \nu (t-s)}.
\end{equation*}
Hence, if we choose $\e_0$ small enough, there is 
\begin{equation}\label{eq2}
\begin{aligned}
 &\int_{\frac{1}{2}y}^{1+\mu} \frac{w_t(y)}{w_s(z)} H_{\xi}(t-s,y,z) e^{\e_0(z-y)_+|\xi|}dz \\
\lesssim&\int_0^{\infty}  e^{\frac{\e_0(z-y)^2}{2\nu (t-s)}} e^{\frac{1}{2}|\xi|^2 \nu (t-s)} \frac{1}{\sqrt{\nu (t-s)}} e^{-\frac{(y-z)^2}{4\nu (t-s)}} e^{-\nu\xi^2(t-s)} dz\lesssim1,
\end{aligned}
\end{equation}
from where we further arrive at
\begin{equation*}
\left \Vert e^{\e_0(1+ \mu -y)_+|\xi|} I_3 \right\Vert_{\mathcal{L}^{\infty}_{\mu, \nu, t}} \lesssim  \left\Vert e^{(1+\mu-z)_+|\xi|} z\partial_z N_{\xi}(s,z)   \right\Vert_{\mathcal{L}^{\infty}_{\mu, \nu, s}}.
\end{equation*}
\textbf{(4) Estimate of $\sup_{y \in \Omega_{\mu}} |e^{\e_0(1+\mu-y)_{+}|\xi|} w_t(y) I_4|$:}\\
Recalling the definition of $I_4$ and splitting the integral interval into $[1+\mu, 1+ \tilde{\mu}]$ and $[1+ \tilde{\mu}, \infty)$, we have
\begin{equation*}
\begin{aligned}
&\sup_{y \in \Omega_\mu}|e^{\e_0(1+\mu-y)_{+}|\xi|} w_t(y) I_4|\\
=&\sup_{y \in \Omega_\mu}\Big|e^{\e_0(1+\mu-y)_{+}|\xi|} w_t(y) y \int_{1+\mu}^{\infty} H_{\xi} (t-s,y,z) \partial_z N_{\xi}(s,z) dz\Big|\\
\leq&\sup_{y \in \Omega_\mu}\Big|e^{\e_0(1+\mu-y)_{+}|\xi|} w_t(y) y \int_{1+\mu}^{1+\tilde{\mu}} H_{\xi} (t-s,y,z) \partial_z N_{\xi}(s,z) dz\Big|\\
& + \sup_{y \in \Omega_\mu}\Big|e^{\e_0(1+\mu-y)_{+}|\xi|} w_t(y) y \int_{1+\tilde{\mu}}^{\infty} H_{\xi} (t-s,y,z) \partial_z N_{\xi}(s,z) dz\Big|\\
\lesssim& \sup_{y \in \Omega_\mu} \Big|\int_{1+\mu}^{1+\tilde{\mu}} e^{\e_0(z-y)_+|\xi|} H_{\xi}(t-s,y,z) \frac{w_t(y)}{w_s(z)} \frac{y}{z} e^{\e_0{(1+\mu -z)_+|\xi|}}w_s(z) z \partial_z N_{\xi}(s,z) dz\Big|\\
&+ \sup_{y \in \Omega_\mu}\Big| \int_{1+\tilde{\mu}}^{\infty} H_{\xi} (t-s,y,z) e^{\e_0(z-y)_{+}|\xi|} w_t(y)y e^{\e_0(1+\mu-z)_{+}|\xi|} \partial_z N_{\xi}(s,z) dz\Big|.
\end{aligned}
\end{equation*}
For the first integral, noticing that $z>1+\mu>y$, we obtain
\begin{equation*}
\begin{aligned}
&\sup_{y \in \Omega_\mu} \Big|\int_{1+\mu}^{1+\tilde{\mu}} e^{\e_0(z-y)_+|\xi|} H_{\xi}(t-s,y,z) \frac{w_t(y)}{w_s(z)} \frac{y}{z} e^{\e_0{(1+\mu -z)_+|\xi|}}w_s(z) z \partial_z N_{\xi}(s,z) dz \Big|\\
\lesssim& \sup_{y \in \Omega_\mu} \int_{1+\mu}^{1+\tilde{\mu}} e^{\e_0(z-y)_+|\xi|} H_{\xi}(t-s,y,z) \frac{w_t(y)}{w_s(z)} dz \Big\Vert e^{\e_0{(1+\mu -z)_+|\xi|}} z \partial_z N_{\xi}(s,z) \Big\Vert_{\mathcal{L}^{\infty}_{\tilde{\mu}, \nu, s}}.
\end{aligned}
\end{equation*}
While for the second integral, using $w_t(y)y \lesssim1 $ and  $z> 1+\tilde{\mu} > 1+\mu$, we get
\begin{equation*}
\begin{aligned}
 &\sup_{y \in \Omega_\mu}\Big| \int_{1+\tilde{\mu}}^{\infty} H_{\xi} (t-s,y,z) e^{\e_0(z-y)_{+}|\xi|} w_t(y)y e^{\e_0(1+\mu-z)_{+}|\xi|} \partial_z N_{\xi}(s,z) dz\Big|\\
\lesssim & \sup_{y \in \Omega_\mu}\Big| \int_{1+\tilde{\mu}}^{\infty} H_{\xi} (t-s,y,z) e^{\e_0(z-y)_{+}|\xi|}\partial_z N_{\xi}(s,z) dz\Big|\\
\lesssim&  \sup_{y \in \Omega_\mu} \Big(\int_{1+\tilde{\mu}}^{\infty} |H_{\xi} (t-s,y,z) e^{\e_0(z-y)_{+}|\xi|}|^2 dz\Big)^{1/2} \Big(\int_{1+\tilde{\mu}}^{\infty} |\partial_z N_{\xi}(s,z)|^2 dz|\Big)^{1/2}.
\end{aligned}
\end{equation*}
Therefore, in order to bound the $I_4$ term, we are left to estimate the following two quantities:
\begin{equation}\label{target3}
\Big\Vert \mathbbm{1}_{\{0 \leq y \leq 1+\mu\}}  \mathbbm{1}_{\{1+\mu \leq z \leq 1+\tilde{\mu}\}} e^{\e_0(z-y)_+|\xi|} H_{\xi}(t-s, y,z) \frac{w_t(y)}{w_s(z)} \Big\Vert_{L^{\infty}_yL^{1}_z},
\end{equation}
and
\begin{equation}\label{target4}
\Big\Vert \mathbbm{1}_{\{0 \leq y \leq 1+\mu\}} \mathbbm{1}_{\{z \geq 1+\tilde{\mu}\}}  H_{\xi} (t-s,y,z) e^{\e_0(z-y)_{+}|\xi|} \Big\Vert_{L^{\infty}_y L^2_z}.
\end{equation}
Since $z \geq 1+\mu \geq y$ and $t>s$, there is $y\frac{\nu^{2\beta}}{t^{\beta}} \leq z\frac{\nu^{2\beta}}{s^{\beta}} $.
Hence, we have
\begin{equation*}
\frac{w_t(y)}{w_s(z)} \leq \frac{w\left(y\frac{\nu^{2\beta}}{t^{\beta} \gamma^{2\beta}}\right)}{w\left(z\frac{\nu^{2\beta}}{s^{\beta} \gamma^{2\beta}}\right)} + \frac{\left(1- \frac{\gamma^2 t}{\nu^2}\right)_+}{\left(1- \frac{\gamma^2 s}{\nu^2}\right)_+} \lesssim 1.
\end{equation*}
Following the same steps as in~\eqref{eq2}, the quantity defined in~\eqref{target3} is bounded by
\begin{equation*}
\begin{aligned}
& \sup_{y \in \Omega_\mu} \Big|\int_{1+\mu}^{1+\tilde{\mu}} e^{\e_0(z-y)_+|\xi|} H_{\xi}(t-s,y,z) \frac{w_t(y)}{w_s(z)} dz \Big|\\
\lesssim& \sup_{y \in \Omega_\mu} \Big|\int_{1+\mu}^{1+\tilde{\mu}} e^{\e_0(z-y)_+|\xi|} H_{\xi}(t-s,y,z) dz\Big| \lesssim 1.\\
\end{aligned}
\end{equation*}
By similar arguments, for $\e_0$ small enough, we obtain
\begin{equation*}
\begin{aligned}
&\int_{1+\tilde{\mu}}^{\infty} \Big|H_{\xi} (t-s,y,z) e^{\e_0(z-y)_{+}|\xi|}\Big|^2 dz\\
\lesssim& \int_{1+\tilde{\mu}}^{\infty} \Big|e^{\frac{\e_0^2(z-y)^2}{2\nu (t-s)}} e^{\frac{1}{2} \nu (t-s) |\xi|^2} \frac{1}{\sqrt{\nu (t-s)}} e^{-\frac{(y-z)^2}{4\nu(t-s)}} e^{-\nu \xi^2(t-s)}\Big|^2 dz
\lesssim \int_{1+\tilde{\mu}}^{\infty} \Big|\frac{1}{\sqrt{\nu (t-s)}} e^{-\frac{(y-z)^2}{16\nu(t-s)}}\Big|^2 dz\\
\lesssim& \int_0^{\infty} \frac{1}{\sqrt{\nu (t-s)}} e^{-\frac{(y-z)^2}{16\nu(t-s)}} dz  \frac{\tilde{\mu}-\mu}{\sqrt{\nu (t-s)}} e^{-\frac{(\tilde{\mu}-\mu)^2}{16\nu(t-s)}} \frac{1}{\tilde{\mu}- \mu}\\
\lesssim&  \frac{1}{\tilde{\mu}- \mu}.\\
\end{aligned}
\end{equation*}
Therefore, we have
\begin{equation*}
\begin{aligned}
&\sup_{y \in \Omega_\mu}|e^{\e_0(1+\mu-y)_{+}|\xi|} w_t(y) I_4|\\
\lesssim& \Vert e^{\e_0{(1+\mu -z)_+|\xi|}} z \partial_z N_{\xi}(s,z) \Vert_{\mathcal{L}^{\infty}_{\tilde{\mu}, \nu, s}} + \frac{1}{\sqrt{\tilde{\mu}-\mu}}\Vert \partial_z N_{\xi}(z) \Vert_{L^2(z \geq 1+ \tilde{\mu})},
\end{aligned}
\end{equation*}
which completes the proof.
\end{proof}

\begin{Lemma}
Assume that $\mu$ and $\tilde{\mu}$ satisfy $0< \mu < \tilde{\mu} < \mu_0 - \gamma \sqrt{s}$ with $\tilde{\mu}- \mu \geq \frac{1}{c} (\mu_0-\mu - \gamma \sqrt{s} )$ for some constant $c \geq 1$. Then we have
\begin{equation}
\begin{aligned}
&\Big\Vert e^{\e_0(1+ \mu -y)_{+}|\xi|} \xi^i(y \partial y)^j \int_0^{\infty} H_{\xi}(t-s,y,z) N_{\xi}(s,z) dz \Big\Vert_{\mathcal{L}_{\mu, \nu, t}^{\infty}}\\
 \lesssim& \Vert e^{\e_0(1+\mu-y)_{+}|\xi|}  N_{\xi}(s,z) \Vert_{\mathcal{L}_{\tilde{\mu},\nu,s}^{\infty}} +  \Vert e^{\e_0(1+\mu-y)_{+}|\xi|} (y \partial_y)(\partial_x^iN)_{\xi}(s,z) \Vert_{\mathcal{L}_{\tilde{\mu},\nu,s}^{\infty}}\\& + \frac{1}{\sqrt{\mu_0-\mu -\gamma \sqrt{s}}} \Vert  \partial_y^j (\partial_x^iN)_{\xi}(s,z) \Vert_{L^2(z \geq 1+ \tilde{\mu})},
\end{aligned}
\end{equation}
for all $i+j \leq 1$.
\end{Lemma}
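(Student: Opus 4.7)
The admissible range $i+j \leq 1$ consists of three pairs; the case $(i,j) = (0,1)$ is exactly Lemma \ref{l1} (after the trivial relabeling $z\partial_z \leftrightarrow (y\partial_y)$ of the integration variable). For the case $(i,j) = (1,0)$, I first commute $\xi$ through the $z$-integral using the elementary Fourier identity
\[
\xi \int_0^\infty H_\xi(t-s,y,z)\, N_\xi(s,z)\, dz \;=\; -\,i \int_0^\infty H_\xi(t-s,y,z)\, (\partial_x N)_\xi(s,z)\, dz,
\]
which reduces it to the $(i,j) = (0,0)$ estimate with $N$ replaced by $\partial_x N$, so that the $\partial_x^i$ on the right-hand side of the claim materializes automatically from the reduction (with $|\xi| \geq 1$ dominating the $\xi=0$ mode, which contributes trivially). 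Hence the real task is the $(0,0)$ base case.

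\textbf{Proof of the $(0,0)$ base case.} Since there is no outer $(y\partial_y)$, no preliminary integration by parts is required. Using the same smooth cutoff $\phi$ as in Lemma \ref{l1}, I split
\[
\int_0^\infty H_\xi N_\xi\, dz \;=\; \int_0^{3y/4} \phi(z/y)\, H_\xi N_\xi\, dz \,+\, \int_{y/2}^{1+\tilde\mu} (1-\phi(z/y))\, H_\xi N_\xi\, dz \,+\, \int_{1+\tilde\mu}^\infty H_\xi N_\xi\, dz \;=:\; J_1+J_2+J_3.
\]
For $J_1$ (where $z \leq 3y/4$) the Gaussian $e^{-(y-z)^2/(4\nu(t-s))}$ dominates the weight ratio $w_t(y)/w_s(z)$ via $w(y) \lesssim \sqrt\nu\, e^{y/(C\sqrt\nu)}$, which is the exact calculation that bounded $I_1$ and $I_2$ in Lemma \ref{l1}; it produces the contribution $\|e^{\epsilon_0(1+\mu-y)_+|\xi|} N_\xi\|_{\mathcal{L}^\infty_{\tilde\mu,\nu,s}}$. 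For $J_2$ (where $y/2 \leq z \leq 1+\tilde\mu$) the ratio $w_t(y)/w_s(z)$ is bounded by Lemma \ref{l0}(2), and the $L^\infty_y L^1_z$ bound on $H_\xi e^{\epsilon_0(z-y)_+|\xi|}$ from the $I_3$ analysis again controls it by $\|e^{\epsilon_0(1+\mu-y)_+|\xi|} N_\xi\|_{\mathcal{L}^\infty_{\tilde\mu,\nu,s}}$. For $J_3$ (where $z \geq 1+\tilde\mu$) Cauchy--Schwarz in $z$ combined with the $L^\infty_y L^2_z$ bound on $H_\xi e^{\epsilon_0(z-y)_+|\xi|}$ restricted to $\{z \geq 1+\tilde\mu\}$ produces the factor $(\tilde\mu-\mu)^{-1/2} \sim (\mu_0 - \mu - \gamma \sqrt{s})^{-1/2}$ in front of $\|N_\xi\|_{L^2(z\geq 1+\tilde\mu)}$, exactly as in the $I_4$ analysis. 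The middle term $\|(y\partial_y)(\partial_x^i N)_\xi\|$ of the claim is strictly necessary only in the $(0,1)$ subcase but is preserved in the statement to provide a single unified right-hand side that covers all three pairs.

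\textbf{Main obstacle.} All three pieces $J_1, J_2, J_3$ are controlled by kernel integrals that have already been established inside the proof of Lemma \ref{l1}; the one genuinely delicate inequality is the control of $w_t(y)/w_s(z)$ against the heat-kernel Gaussian in the near-diagonal regime $z \leq 3y/4$, particularly when $s > \nu^2/(2\gamma^2)$ and $w_s(z)$ can be as small as $\sqrt\nu$ while $w_t(y)$ is substantially larger. This is absorbed by choosing $\beta \in (0, 1/2)$ and $\gamma > \gamma_0$ large enough that $e^{y/(C\gamma^{2\beta}\sqrt\nu\, t^\beta)} e^{-y/(4c\sqrt{\nu t})} \lesssim 1$ uniformly in $y \in [0, 1+\mu_0]$ and in small $\nu$, after which all remaining heat-kernel integrals collapse to universal constants exactly as in Lemma \ref{l1}. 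No new analytical input beyond that is required.
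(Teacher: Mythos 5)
Your proposal is correct and follows essentially the same approach as the paper, which simply states that the $(0,0)$ case is "similar" to the $(0,1)$ case of Lemma \ref{l1} and that $(1,0)$ follows by applying the $(0,0)$ bound with $N$ replaced by $\partial_x N$. You make the $(0,0)$ case explicit via the $J_1+J_2+J_3$ decomposition (no integration by parts, but the same cutoff $\phi$ and the same weight-ratio and kernel bounds), and you make the $(1,0)$ reduction explicit via the Fourier identity $\xi N_\xi = -i(\partial_x N)_\xi$; both are just the details the paper leaves to the reader.
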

\begin{proof}
In the previous lemma, we have proved the result in the case $(i,j)=(0,1)$.  The proof of case $(i,j)=(0,0)$ is similar to that of $(i,j)=(0,1)$. The result when $(i,j)=(1,0)$ follows from the bound of $(i,j)=(0,0)$ replacing $N$ by $\partial_xN$.
\end{proof}

By summing up the above estimates in $\xi$, we have the following corollary.
\begin{Corollary}\label{X1}
Assume that $\mu$ and $\tilde{\mu}$ satisfies $0< \mu < \tilde{\mu} < \mu_0 - \gamma \sqrt{s}$ with $\tilde{\mu}- \mu \geq \frac{1}{c} (\mu_0-\mu - \gamma \sqrt{s} )$ for some constant $c \geq 1$. Then we have
\begin{equation}
\begin{aligned}
&\Big\Vert\xi^i(y \partial y)^j \int_0^{\infty}{H}(t-s,y,z) N_{\xi}(s,z) dz \Big\Vert_{X_{\mu,t}}\\ 
\lesssim& \Vert \partial_x^i N(s,z) \Vert_{X_{\tilde{\mu},s}} +  \Vert (\partial_x^i (y \partial_y)^jN)_{\xi}(s,z) \Vert_{X_{\tilde{\mu},s}} \\
&\ \ \ \ \ \ \ \ + \frac{1}{\sqrt{\mu_0-\mu -\gamma \sqrt{s}}} \sum_{\xi \in \mathbb{Z}}\Vert \partial_y^j (\partial_x^iN)_{\xi}(s,z) \Vert_{L^2(y \geq 1+ \tilde{\mu})},
\end{aligned}
\end{equation}
 for all $i+j \leq 1$ and $s \in [0,t]$.
\end{Corollary}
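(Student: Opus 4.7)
\textbf{Proof plan for Corollary \ref{X1}.} The statement is a direct frequency-summed reformulation of the preceding lemma, so the plan is essentially to apply that lemma at each Fourier mode $\xi \in \mathbb{Z}$ and then sum. The key observation is that the $X_{\mu,t}$ norm is defined by
\[
\Vert f \Vert_{X_{\mu,t}} = \sum_{\xi \in \mathbb{Z}}  \Vert e^{\e_0(1+ \mu -y)_+ |\xi|} f_{\xi} \Vert_{\mathcal{L}^{\infty}_{\mu,\nu,t}},
\]
so that once we have a per-frequency bound whose right-hand side has the same weight structure, summing in $\xi$ recovers the $X_{\mu,s}$ norms on the right and the $X_{\mu,t}$ norm on the left. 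The multiplier $\xi^i$ (for $i\in\{0,1\}$) corresponds on the physical side to $\partial_x^i$ up to an $i$-power, which is why $\partial_x^i N$ appears on the right-hand side.

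First I would fix $\xi$ and invoke the preceding lemma, which yields
\[
\begin{aligned}
&\Big\Vert e^{\e_0(1+ \mu -y)_{+}|\xi|} \xi^i(y \partial_y)^j \int_0^{\infty} H_{\xi}(t-s,y,z) N_{\xi}(s,z)\, dz \Big\Vert_{\mathcal{L}_{\mu, \nu, t}^{\infty}}\\
&\quad\lesssim \Vert e^{\e_0(1+\tilde{\mu}-z)_{+}|\xi|} (\partial_x^iN)_{\xi}(s,z) \Vert_{\mathcal{L}_{\tilde{\mu},\nu,s}^{\infty}} +  \Vert e^{\e_0(1+\tilde{\mu}-z)_{+}|\xi|} (y \partial_y)^j(\partial_x^iN)_{\xi}(s,z) \Vert_{\mathcal{L}_{\tilde{\mu},\nu,s}^{\infty}}\\
&\qquad + \frac{1}{\sqrt{\mu_0-\mu -\gamma \sqrt{s}}} \Vert  \partial_y^j (\partial_x^iN)_{\xi}(s,z) \Vert_{L^2(z \geq 1+ \tilde{\mu})},
\end{aligned}
\]
using that $e^{\e_0(1+\mu-z)_+|\xi|}\le e^{\e_0(1+\tilde\mu-z)_+|\xi|}$ since $\mu<\tilde\mu$. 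Then I would sum in $\xi \in \mathbb{Z}$. On the left, the sum is the definition of $\Vert \xi^i(y\partial_y)^j \int_0^\infty H(t-s,y,z) N(s,z)\, dz\Vert_{X_{\mu,t}}$ after identifying $\xi^i$ with the Fourier symbol of $\partial_x^i$. On the right, the first two terms become $\Vert \partial_x^i N\Vert_{X_{\tilde\mu,s}}$ and $\Vert \partial_x^i (y\partial_y)^j N\Vert_{X_{\tilde\mu,s}}$ respectively, and the last term becomes $(\mu_0-\mu-\gamma\sqrt{s})^{-1/2}\sum_{\xi}\Vert \partial_y^j(\partial_x^iN)_\xi\Vert_{L^2(z\ge 1+\tilde\mu)}$.

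The proof is thus essentially bookkeeping once the previous lemma is in hand; there is no real obstacle. The one minor point requiring care is that the previous lemma was stated for the $(i,j)=(0,1)$ case and extended in the immediately preceding short lemma to all $i+j\le 1$ by swapping $N$ with $\partial_x N$ when $i=1$, so the extra $\xi$ factor on the left is exactly absorbed. The hypothesis $\tilde\mu-\mu\gtrsim \mu_0-\mu-\gamma\sqrt{s}$ is used only in the $I_4$ estimate from the previous lemma to control the off-diagonal tail, and it propagates unchanged through the summation.
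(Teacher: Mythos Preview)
Your proposal is correct and follows exactly the paper's approach: the paper states only that the corollary follows ``by summing up the above estimates in $\xi$,'' and your argument spells out precisely this summation, including the harmless upgrade $e^{\e_0(1+\mu-z)_+|\xi|}\le e^{\e_0(1+\tilde\mu-z)_+|\xi|}$ needed to match the $X_{\tilde\mu,s}$ norm.
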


\begin{Lemma}\label{l2}
Assume that $\mu$ and $\tilde{\mu}$ satisfies $0< \mu < \tilde{\mu} < \mu_0 - \gamma \sqrt{s}$ with $\tilde{\mu}- \mu \geq \frac{1}{c} (\mu_0-\mu - \gamma \sqrt{s} )$ for some constant $c \geq 1$. Then we have
\begin{equation}
	\label{resi:kern}
\begin{aligned}
&\Big\Vert e^{\e_0(1+\mu-y)_+|\xi|} \xi^i (y\partial y)^j \int_0^{\infty} R_{\xi} (t-s,y,z) N_{\xi}(s,z) dz \Big\Vert_{\mathcal{L}^{\infty}_{\mu,\nu,t}} \\
\lesssim& \Vert e^{\e_0(1+ \mu-y)_+|\xi|} \xi^i (y \partial_y)^j N_{\xi}(s,y) \Vert_{\mathcal{L}^{\infty}_{\tilde{\mu}, \nu,s}} + \Vert e^{\e_0(1+ \mu-y)_+|\xi|}  N_{\xi}(s,y) \Vert_{\mathcal{L}^{\infty}_{\tilde{\mu}, \nu,s}}  \\
&\ \ \ \ \ \ +\frac{1}{\sqrt{\mu_0- \mu-\gamma \sqrt{s}}} \Vert  \xi^i \partial_y^j N_{\xi}(s,y)\Vert_{L^2(y \geq 1+ \tilde{\mu})},
\end{aligned}
\end{equation}
 for all $i+j \leq 1$ and $s \in [0,t]$.
\end{Lemma}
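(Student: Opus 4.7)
The plan is to mirror Lemma \ref{l1} but exploit two simplifications peculiar to the residual kernel. First, since $R_\xi$ depends only on $y+z$, the bound \eqref{kernel estimation 2} already controls $(y\partial_y)^k R_\xi$ directly, so no integration by parts in $z$ is needed: $(y\partial_y)^j$ can be passed under the integral and estimated kernel-wise. Second, the case $(i,j)=(1,0)$ follows from $(0,0)$ applied to $\partial_x N$ (via $\xi N_\xi = -i(\partial_x N)_\xi$), so only $j \leq 1$ with no factor of $\xi$ need be argued in detail.

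For each admissible $(i,j)$ I split the $z$-integral as
\[
\int_0^\infty = \int_0^{1+\mu} + \int_{1+\mu}^{1+\tilde\mu} + \int_{1+\tilde\mu}^\infty.
\]
On the first two intervals I factor out the analytic $\mathcal L^\infty_{\tilde\mu,\nu,s}$-norm of $(y\partial_y)^j N_\xi$ (respectively of $N_\xi$), inserting $e^{\epsilon_0(1+\tilde\mu-z)_+|\xi|} w_s(z)$ and $e^{\epsilon_0(z-y)_+|\xi|}$ as usual. The task then reduces to showing
\[
\sup_{y\in\Omega_\mu}\int_0^{1+\tilde\mu} \bigl|\xi^i (y\partial_y)^j R_\xi(t-s,y,z)\bigr|\, \frac{w_t(y)}{w_s(z)}\, e^{\epsilon_0(z-y)_+|\xi|}\, dz \;\lesssim\; 1.
\]
On the Sobolev tail I use Cauchy--Schwarz, and reduce to
\[
\sup_{y\in\Omega_\mu}\left(\int_{1+\tilde\mu}^\infty \bigl|\xi^i (y\partial_y)^j R_\xi(t-s,y,z)\bigr|^2 e^{2\epsilon_0(z-y)_+|\xi|}\,dz\right)^{1/2} \;\lesssim\; \frac{1}{\sqrt{\tilde\mu-\mu}},
\]
which, combined with $\tilde\mu-\mu \geq (\mu_0-\mu-\gamma\sqrt{s})/c$, produces the target $(\mu_0-\mu-\gamma\sqrt{s})^{-1/2}$ factor in \eqref{resi:kern}.

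Both kernel integrals are estimated piece by piece using the two summands in \eqref{kernel estimation 2}. The boundary-layer summand $b\, e^{-\theta_0 b(y+z)/2}$ integrates to $O(1)$ in $z$ on its own, and its $|\xi|$-decay absorbs $e^{\epsilon_0 z|\xi|}$ once $\epsilon_0 < \theta_0/4$; the Sobolev tail additionally produces a $(\tilde\mu-\mu)^{-1/2}$ from the geometric tail $e^{-\theta_0(\tilde\mu-\mu)/\sqrt\nu}/\sqrt\nu$ applied at $z=1+\tilde\mu$. The heat-like summand is handled exactly as the $I_2$ term in Lemma \ref{l1}, with $\epsilon_0$ small enough to complete the square against the Gaussian and the factor $e^{\nu\xi^2(t-s)/2}$ controlled by $e^{-\nu\xi^2(t-s)/8}$.

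The main obstacle, as in Lemma \ref{l1}, is controlling the weight ratio $w_t(y)/w_s(z)$ in the regime $s > \nu^2/(2\gamma^2)$, where both weights can individually be as small as $\sqrt\nu$ and the temporal rescalings differ. The key observation is that the residual kernel carries the stronger boundary-layer scale $b \geq 1/\sqrt\nu$ rather than the parabolic scale $1/\sqrt{\nu(t-s)}$, so property 5 of Lemma \ref{l0} gives $w_t(y) \lesssim \sqrt\nu\, \exp\bigl(y\nu^{2\beta-1/2}/(C\gamma^{2\beta} t^\beta)\bigr)$ while $w_s(z)\gtrsim\sqrt\nu$; the resulting growth in $y$ is strictly slower than $\exp(\theta_0 y/(4\sqrt\nu))$ once $\gamma_0$ is large and $\epsilon_0$ is small, by the same chain of inequalities that produced \eqref{eq1}. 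With this weight-vs-kernel comparison in hand, the block structure of Lemma \ref{l1} transfers verbatim and closes the proof.
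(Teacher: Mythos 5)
You correctly identify that $(y\partial_y)^k R_\xi$ can be estimated directly from \eqref{kernel estimation 2} without integration by parts, and your decomposition of the $z$-integral is serviceable. However, the key step — the weight-ratio bound in the regime $s > \nu^2/(2\gamma^2)$ — is wrong as written, and the error is not cosmetic.

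You claim that property 5 of Lemma~\ref{l0} gives $w_t(y) \lesssim \sqrt\nu\, \exp\bigl(y\nu^{2\beta-1/2}/(C\gamma^{2\beta} t^\beta)\bigr)$ and that $w_s(z) \gtrsim \sqrt\nu$ then closes the weight comparison. But property 5 applies only to the function $w$, and $w_t(y)$ is defined as
\begin{equation*}
w_t(y) = w\!\left(y\left(\tfrac{\nu^2}{t\gamma^2}\right)^\beta\right) + \left(1-\tfrac{\gamma^2 t}{\nu^2}\right)_+ .
\end{equation*}
Throughout Section~4 the standing assumption is $t \leq \nu^2/\gamma^2$, so in the sub-case $\nu^2/(2\gamma^2) < s < t < \nu^2/\gamma^2$ the additive term $(1-\gamma^2 t/\nu^2)_+$ lies in $(0,1/2)$, i.e.\ is of order $1$, not $O(\sqrt\nu)$. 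Your inequality $w_t(y)\lesssim\sqrt\nu\,e^{\cdots}$ therefore fails, and pairing the faulty bound with $w_s(z)\gtrsim\sqrt\nu$ would only give $w_t(y)/w_s(z)\lesssim\nu^{-1/2}$, which is far too large to close the kernel integral. The paper avoids this by using the additive structure of $w_s(z)$: since $t\geq s$, one has $(1-\gamma^2 t/\nu^2)_+ \leq (1-\gamma^2 s/\nu^2)_+$, and the latter appears additively in $w_s(z)$, so
\begin{equation*}
\frac{w_t(y)}{w_s(z)} \;\leq\; \frac{w\!\left(y\left(\tfrac{\nu^2}{t\gamma^2}\right)^\beta\right)}{w\!\left(z\left(\tfrac{\nu^2}{s\gamma^2}\right)^\beta\right)} + \frac{(1-\gamma^2 t/\nu^2)_+}{(1-\gamma^2 s/\nu^2)_+} \;\lesssim\; e^{y\nu^{2\beta-1/2}/(C\gamma^{2\beta} t^\beta)} + 1,
\end{equation*}
with the second quotient interpreted as $0$ when $(1-\gamma^2 s/\nu^2)_+=0$. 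Without this monotonicity observation — which is the whole point of the time-dependent weight — your estimate does not close.

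A secondary inconsistency: you state that on the near intervals you factor out $\Vert(y\partial_y)^j N_\xi\Vert_{\mathcal L^\infty_{\tilde\mu,\nu,s}}$, but if you truly pass $(y\partial_y)^j$ onto the kernel (as you announced), the nonlinearity remains $N_\xi$ undifferentiated and only $\Vert N_\xi\Vert_{\mathcal L^\infty_{\tilde\mu,\nu,s}}$ should appear. This is harmless for the truth of the lemma (the claimed right-hand side is larger) but signals that the two ingredients of the argument were not reconciled. Once the weight-ratio bound is repaired as above, the rest of your plan — absorbing $e^{\epsilon_0(z-y)_+|\xi|}$ into $e^{-\theta_0 b(y+z)/2}$ for small $\epsilon_0$, Cauchy--Schwarz on the Sobolev tail producing $(\tilde\mu-\mu)^{-1/2}$ — is sound and essentially matches the paper, with your use of the single exponential weight bound for both summands being a minor streamlining of the paper's use of both $y/\sqrt\nu + 1$ and the exponential bound.
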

\begin{proof}
By the same argument as in the previous lemma, we only need to prove the result for the case $(i,j)=(0,1)$. In fact, we show that the quantities \eqref{target1}, \eqref{target2}, \eqref{target3}, and \eqref{target4} share similar bounds as in Lemma~\ref{l1} with kernel $H_{\xi}(t-s,y,z)$ replaced by $R_{\xi}(t-s,y,z)$. To be more specific, we are going to estimate the following term,
\begin{equation*}
\begin{aligned}
\frac{w_t(y)}{w_s(z)} \left( |R_{\xi}(t-s,y,z) + |y \partial_y R_{\xi}(t-s,y,z)|\right)
\lesssim \frac{w_t(y)}{w_s(z)}  \left(b e^{-\frac{\theta_0}{2}b(y+z)} + \frac{1}{\sqrt{\nu t}}e^{-\frac{\theta_0}{2} \frac{(y+z)^2}{\nu t}} e^{-\frac{\nu \xi^2 t}{8}}\right),
\end{aligned}
\end{equation*}
which requires the estimate of the quantity in~\eqref{target1} with $H_{\xi}(t-s,y,z)$ replaced by $R_{\xi}(t-s,y,z)$. We consider two cases.
\textbf{(1) When $s> \nu^2/2\gamma^2$}, we have
\begin{equation*}
\begin{aligned}
\frac{w_t(y)}{w_s(z)} \lesssim& \frac{w\left(y\frac{\nu^{2\beta}}{t^{\beta} \gamma^{2\beta}}\right)}{w\left(z \frac{\nu^{2\beta}}{s^{\beta} \gamma^{2\beta}}\right)} + 1
\lesssim \frac{1}{\sqrt{\nu}} \left(\sqrt{\nu}\mathbbm{1}_{\{0 \leq y\frac{\nu^{2\beta}}{t^\beta \gamma^{2\beta}} \leq \nu\}} + y\frac{\nu^{2\beta}}{t^\beta\gamma^{2 \beta}} \mathbbm{1}_{\{y\frac{\nu^{2\beta}}{t^\beta \gamma^{2\beta}} > \nu\}}\right)   +1\\
\lesssim& \nu^{2\beta-1/2}\frac{y}{s^{\beta}}+1 \lesssim \frac{y}{\sqrt{\nu}} +1,
\end{aligned}
\end{equation*}
and
\begin{equation*}
\frac{w_t(y)}{w_s(z)} \lesssim e^{y \frac{\nu^{2\beta}}{C \gamma^{2\beta} \sqrt{\nu} t^\beta}} \lesssim  e^{y \frac{\nu^{2\beta}}{C \gamma_0^{2\beta} \sqrt{\nu} t^\beta}}.
\end{equation*}
Now~\eqref{target1} is bounded by
\begin{equation*}
\begin{aligned}
&\Big\Vert \mathbbm{1}_{0 \leq 1 \leq 1+\mu} \mathbbm{1}_{\{0 \leq z \leq \frac{3}{4} y\}}\frac{w_t(y)}{w_s(z)} (|R_{\xi}(t-s,y,z) + |y \partial_y R_{\xi}(t-s,y,z)|)\Big\Vert_{L^{\infty}_yL^1_z}\\
\lesssim& \sup_{0 \leq y \leq 1+\mu} \int_0^{\frac{3}{4}y} \left(be^{-\frac{\theta_0}{2}b(y+z)} +\frac{1}{\sqrt{\nu t}} e^{-\frac{\theta_0}{2} \frac{(y+z)^2}{\nu t}} e^{-\frac{\nu \xi^2 t}{8}}\right) \frac{w_t(y)}{w_s(z)} dz\\
\lesssim& \sup_{0 \leq y \leq 1+\mu} \int_0^{\frac{3}{4}y} be^{-\frac{\theta_0}{2} b(y+z)} \frac{y}{\sqrt{\nu}} dz \\
&+ \sup_{0 \leq y \leq 1+\mu} \int_0^{\frac{3}{4}y} \frac{1}{\sqrt{\nu t}} e^{-\frac{\theta_0}{2} \frac{(y+z)^2}{\nu t}} e^{-\frac{\nu \xi^2 t}{8}}  e^{y \frac{\nu^{2\beta}}{C \gamma_0^{2\beta} \sqrt{\nu} t^\beta}} dz\\
&+\sup_{0 \leq y \leq 1+\mu} \int_0^{\frac{3}{4}y} \left(be^{-\frac{\theta_0}{2}b(y+z)} +\frac{1}{\sqrt{\nu t}} e^{-\frac{\theta_0}{2} \frac{(y+z)^2}{\nu t}} e^{-\frac{\nu \xi^2 t}{8}}\right)  dz\\
:=&I_1 +I_2 + I_3.
\end{aligned}
\end{equation*}
For the term $I_1$, there is
\begin{equation*}
\begin{aligned}
I_1 \lesssim& \sup_{0 \leq y \leq 1+\mu}\int_0^{\frac{3}{4}y}  b e^{-\frac{\theta_0}{2}bz} dz \cdot e^{-\frac{\theta_0}{2}by} \frac{y}{\sqrt{\nu}}
\lesssim \sup_{0 \leq y \leq 1+\mu}by \cdot e^{-\frac{\theta_0}{2}by} \cdot \frac{y}{\sqrt{\nu}}\\
\lesssim& \sup_{0 \leq y \leq 1+\mu} \left(\frac{1}{\sqrt{\nu}} + |\xi|\right) e^{-\frac{\theta_0}{2}\frac{y}{\sqrt{\nu}}} \cdot \frac{y^2}{\sqrt{\nu}}\cdot e^{-\frac{\theta_0}{2} |\xi|y}\\
\lesssim& \sup_{0 \leq y \leq 1+\mu}    e^{-\frac{\theta_0}{2}\frac{y}{\sqrt{\nu}}} \cdot \frac{y^2}{{\nu}} +  e^{-\frac{\theta_0}{2} \frac{y}{\sqrt{\nu}}} \cdot \frac{y}{\sqrt{\nu}}  \cdot y |\xi| e^{-\frac{\theta_0}{2} |\xi| y} \lesssim 1.
\end{aligned}
\end{equation*}
While for the second piece $I_2$, we get
\begin{equation*}
\begin{aligned}
I_2 \lesssim& \sup_{0 \leq y \leq 1+\mu} \int_0^{\frac{3s^\beta}{4t^\beta}y} \frac{t^\beta}{s^\beta} \frac{1}{\sqrt{\nu t}} e^{-\theta_0 \frac{y^2+z^2}{\nu t}} dz \cdot e^{-\frac{\nu \xi^2 t}{8}} \cdot e^{\frac{y \nu^{2\beta}}{C \gamma_0^{2\beta} \sqrt{\nu} t^\beta}}\\
\lesssim& \sup_{0 \leq y \leq 1+\mu}  \frac{y}{\sqrt{\nu t}} e^{-\theta_0 \frac{y^2+z^2}{\nu t}} \cdot e^{\frac{y }{C\gamma_0^{2\beta}\sqrt{\nu} t^\beta}}\\
\lesssim& \sup_{0 \leq y \leq 1+\mu} \frac{y}{\sqrt{\nu t}} e^{-\sqrt{\theta_0} \frac{y}{\sqrt{\nu t}}} \cdot e^{\frac{y }{C \gamma_0^{2\beta} \sqrt{\nu t} } t^{1/2-\beta}}.
\end{aligned}
\end{equation*}
By choosing $C$ large enough, there is  
\begin{equation*} 
I_2 \lesssim \sup_{0 \leq y \leq 1+\mu} \frac{y}{\sqrt{\nu t}} e^{-\frac{\sqrt{\theta_0}}{2} \frac{y}{\sqrt{\nu t}}} \lesssim1.
\end{equation*}
For the term $I_3$, we have
\begin{equation*}
\begin{aligned}
I_3 \lesssim& \sup_{0 \leq y \leq 1+\mu} \int_0^{\frac{3s^\beta}{4t^\beta} y} \frac{t^\beta}{s^\beta} \left(b e^{-\frac{\theta_0}{2}b(y+z)} + \frac{1}{\sqrt{\nu t}}e^{-\frac{\theta_0}{2} \frac{(y+z)^2}{\nu t}} e^{-\frac{\nu \xi^2 t}{8}}\right) dz\\
\lesssim& \sup_{0 \leq y \leq 1+\mu} \int_0^{\frac{3s^\beta}{4t^\beta} y} \frac{t^\beta}{s^\beta} b e^{-\frac{\theta_0}{2}bz}dz \cdot e^{-\frac{\theta_0}{2}by} +  \sup_{0 \leq y \leq 1+\mu} \int_0^{\frac{3s^\beta}{4t^\beta} y} \frac{t^\beta}{s^\beta}\frac{1}{\sqrt{\nu t}}e^{-\frac{\theta_0}{2} \frac{y^2+z^2}{\nu t}}dz\\
\lesssim& \sup_{0 \leq y \leq 1+\mu} by e^{-\frac{\theta_0}{2}by} + \frac{y}{\sqrt{\nu t}}e^{-\frac{\theta_0}{2} \frac{y^2}{\nu t}} \lesssim 1.
\end{aligned}
\end{equation*}
\textbf{(2)When $s < \nu^2/2\gamma^2$}, there is
\begin{equation*}
\frac{w_t(y)}{w_s(z)} \lesssim 1,
\end{equation*}
and thus the estimate of quantity \eqref{target1} is the same as that of $I_3$.\\
The rest of the proof follows exactly as the proof of Lemma 4.3 in \cite{KukVicWan20}.
\end{proof}
Summing up the estimate~\eqref{resi:kern} in $\xi$, we have the following corollary.
\begin{Corollary}\label{X2}
Assume that $\mu$ and $\tilde{\mu}$ satisfies $0< \mu < \tilde{\mu} < \mu_0 - \gamma \sqrt{s}$ with $\tilde{\mu}- \mu \geq \frac{1}{c} (\mu_0-\mu - \gamma \sqrt{s} )$ for some constant $c \geq 1$. Then
\begin{equation}
\begin{aligned}
\left\Vert \partial_x^i (y \partial_y)^j \int_0^{\infty} R(t-s,y,z) N(s,z) dz \right\Vert_{X_{\mu , t}} \lesssim&  \Vert \partial_x^i (y\partial_y)^j N(s) \Vert_{X_{\tilde{\mu},s}} + \Vert N(s) \Vert_{X_{\tilde{\mu},s}}\\
&+\frac{1}{\sqrt{\mu_0-\mu-\gamma \sqrt{s}}} \sum_{\xi} \Vert  \partial_x^i (y\partial_y)^j N_{\xi} \Vert_{L^2(y \geq1+\tilde{\mu})},
\end{aligned}
\end{equation}
 for $i+j \leq 1$ and $s \in [0,t]$.
\end{Corollary}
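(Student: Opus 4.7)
The plan is to deduce the corollary from Lemma~\ref{l2} by a termwise summation in the tangential frequency $\xi\in\mathbb Z$. Recall that for any function $f$, by the definition of $X_{\mu,t}$,
\begin{equation*}
\Vert f \Vert_{X_{\mu,t}} = \sum_{\xi\in\mathbb Z}\Vert e^{\e_0(1+\mu-y)_+|\xi|} f_{\xi}\Vert_{\mathcal L^\infty_{\mu,\nu,t}},
\end{equation*}
and that taking the Fourier transform in $x$ converts $\partial_x^i$ into multiplication by $(i\xi)^i$. Therefore, applying Lemma~\ref{l2} with the exponents $(i,j)$ satisfying $i+j\le 1$ and summing the resulting pointwise inequality in $\xi$, the left-hand side becomes exactly $\Vert \partial_x^i (y\partial_y)^j \int_0^\infty R(t-s,y,z)N(s,z)\,dz\Vert_{X_{\mu,t}}$, up to an irrelevant unimodular constant absorbed into $\lesssim$.

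For the right-hand side, the three terms appearing in Lemma~\ref{l2} each carry the weight $e^{\e_0(1+\mu-y)_+|\xi|}$ (rather than the larger $e^{\e_0(1+\tilde\mu-y)_+|\xi|}$), but since $\mu<\tilde\mu$ implies $(1+\mu-y)_+\le(1+\tilde\mu-y)_+$, we may harmlessly enlarge the weight before summing. After this replacement, summing in $\xi$ turns
\begin{equation*}
\sum_{\xi}\Vert e^{\e_0(1+\tilde\mu-y)_+|\xi|}\,\xi^i(y\partial_y)^j N_{\xi}(s)\Vert_{\mathcal L^\infty_{\tilde\mu,\nu,s}}\quad\text{into}\quad \Vert \partial_x^i(y\partial_y)^j N(s)\Vert_{X_{\tilde\mu,s}},
\end{equation*}
and similarly produces $\Vert N(s)\Vert_{X_{\tilde\mu,s}}$ from the second term. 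The $L^2$ piece, which has no exponential weight, sums directly to $\frac{1}{\sqrt{\mu_0-\mu-\gamma\sqrt{s}}}\sum_{\xi}\Vert\partial_y^j(\partial_x^i N)_{\xi}\Vert_{L^2(y\ge 1+\tilde\mu)}$, matching the third term of the corollary.

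\textbf{Main subtlety.} There is essentially no obstacle here beyond bookkeeping: the bound in Lemma~\ref{l2} holds with a constant uniform in $\xi$ (the restrictions $0<\mu<\tilde\mu<\mu_0-\gamma\sqrt s$ and $\tilde\mu-\mu\ge\tfrac1c(\mu_0-\mu-\gamma\sqrt s)$ depend only on $\mu,\tilde\mu,s$), so summation does not introduce a loss. The only point worth flagging is the weight replacement $e^{\e_0(1+\mu-y)_+|\xi|}\le e^{\e_0(1+\tilde\mu-y)_+|\xi|}$, needed to recognize the right-hand side as $X_{\tilde\mu,s}$ norms. This completes the derivation.
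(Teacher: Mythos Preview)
Your proposal is correct and follows the paper's approach exactly: the paper simply states that the corollary is obtained by summing the estimate of Lemma~\ref{l2} in $\xi$, and you have spelled out this summation together with the harmless weight enlargement $e^{\e_0(1+\mu-y)_+|\xi|}\le e^{\e_0(1+\tilde\mu-y)_+|\xi|}$ needed to recognize the $X_{\tilde\mu,s}$ norms.
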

The next lemma deals with contribution from the boundary.
\begin{Lemma}\label{X3}
Let $\mu \in (0, \mu_0 - \gamma s)$ be arbitrary, then 
\begin{equation}
\begin{aligned}
&\Vert \partial_x^i (y \partial_y)^j G(t-s) \partial_z \Delta^{-1}N(s,z)|_{z=0} \Vert_{X_{\mu,t}}\\
 \leq& \frac{1}{\sqrt{t-s}}\left(\frac{1}{\sqrt{\nu}} \mathbbm{1}_{\{0 \leq t \leq {\nu^2}/\gamma^2\}} + \frac{1}{s^\beta}\right) (\Vert  \partial_x^i N(s)\Vert_{Y_{\mu}} + \Vert \partial_x^i N(s) \Vert_{S_\mu})\\
&+ \left(1 + \frac{1}{\sqrt{\nu}} \mathbbm{1}_{\{0 \leq t \leq {\nu^2}/\gamma^2\}}\right) \Vert  \partial_x^i N(s)\Vert_{X_{\mu,s}}, 
\end{aligned}
\end{equation}
for $i+j \leq 1$.
\end{Lemma}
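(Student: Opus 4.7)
The plan is to mimic the boundary-data estimate of \cite{KukVicWan20}, with the new ingredient being the control of the time-dependent weight $w_t$ against the heat kernel decay, in the same spirit as Lemmas \ref{l1} and \ref{l2}. After Fourier transforming, the boundary contribution reduces to $B_\xi(s):=\partial_z \Delta_\xi^{-1} N_\xi(s,z)|_{z=0}=-\int_0^\infty e^{-|\xi| z} N_\xi(s,z)\,dz$, so the quantity in question reads $G_\xi(t-s,y,0) B_\xi(s)$. I would first bound $B_\xi(s)$ by splitting the integral at $z=1+\mu$: the inner part is controlled either via the $\mathcal{L}^1$-type structure of $Y_\mu$ (by integrating on $\partial\Omega_\theta$) or pointwise via $|N_\xi(s,z)|\le \|e^{\e_0(1+\mu-z)_+|\xi|}N_\xi(s)\|_{\mathcal{L}^{\infty}_{\mu,\nu,s}}/w_s(z)$ together with the elementary estimate $\int_0^\infty e^{-|\xi| z}/w_s(z)\,dz \lesssim 1+\mathbbm{1}_{\{t\le \nu^2/\gamma^2\}}/\sqrt{\nu}$ (the $1/\sqrt{\nu}$ loss arises because $w_s(z)\ge \sqrt{\nu}$ can be saturated), yielding the $X_{\mu,s}$ line. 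The tail $z>1+\mu$ is handled by Cauchy--Schwarz against $S_\mu$, using that $e^{-|\xi| z}$ is uniformly $L^2$ beyond $1+\mu$.

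Next I would decompose $G_\xi = \tilde{H}_\xi + R_\xi$ and bound each kernel piece at $z=0$. For the heat part, $\tilde{H}_\xi(t-s,y,0) = 2(\nu(t-s))^{-1/2} e^{-y^2/4\nu(t-s)} e^{-\nu\xi^2(t-s)}$, and applying $(y\partial_y)^j$ with $j\le 1$ introduces at most a factor $y^2/(\nu(t-s))$ which is absorbed into $e^{-y^2/(8\nu(t-s))}$, leaving the expected $(\nu(t-s))^{-1/2}$ prefactor responsible for the $1/\sqrt{t-s}$ loss in the first line. The residual $R_\xi(t-s,y,0)$ is estimated via \eqref{kernel estimation 2}: its regular component $b\, e^{-\theta_0 b y/2}$ does not blow up as $t\to s$, and the Gaussian component is treated as above. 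This is precisely why $R$ pairs only with $X_{\mu,s}$ and produces no $1/\sqrt{t-s}$, feeding the second line of the inequality.

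The core technical step is to evaluate $\sup_{y\in \Omega_\mu} w_t(y) e^{\e_0(1+\mu-y)_+|\xi|} e^{-y^2/(8\nu(t-s))}$ uniformly in $\xi$. Choosing $\e_0$ sufficiently small, the tangential exponential is absorbed into $e^{-\nu\xi^2(t-s)/2}$, so it suffices to bound $\sup_{y\ge 0} w_t(y) e^{-y^2/(8\nu(t-s))}$. For $t>\nu^2/\gamma^2$ this gives $\lesssim \sqrt{\nu}$ by Lemma \ref{l0}(5). For $t\le \nu^2/\gamma^2$ I would split at $y\nu^{2\beta}/(t^\beta\gamma^{2\beta})=\sqrt{\nu}$: in the low regime $w_t(y)\lesssim 1+\sqrt{\nu}$, and in the high regime $w(y\nu^{2\beta}/(t^\beta\gamma^{2\beta}))\lesssim \sqrt{\nu}\, e^{y\nu^{2\beta}/(C\sqrt{\nu} t^\beta\gamma^{2\beta})}$ gets absorbed into the Gaussian because $\beta<1/4$ and $\gamma$ is large, exactly as in the derivation of \eqref{eq1}. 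Dividing by $w_s(0)\sim \sqrt{\nu}+(1-\gamma^2 s/\nu^2)_+$ then reproduces the prefactor $1/\sqrt{\nu}\,\mathbbm{1}_{\{t\le \nu^2/\gamma^2\}}$ exactly, while the $1/s^\beta$ factor comes from the rescaled argument $\nu^{2\beta}/(s^\beta\gamma^{2\beta})$ inside $w_s(z)^{-1}$ when estimating the pointwise bound on $N_\xi$ in the $B_\xi$ integral.

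The main obstacle is this case analysis coupling $w_t(y)/w_s(0)$ with the heat scale $\sqrt{\nu(t-s)}$, in particular extracting the sharp $1/s^\beta$ in the transient regime without incurring extra $\nu^{-1/2}$ losses outside $t\le \nu^2/\gamma^2$; the constraint $\beta<1/4$ is what makes this absorption possible. Once the pointwise kernel bound is established, summing in $\xi$, taking supremum over $\mu$, and invoking the definition of $X_{\mu,t}$ finishes the proof.
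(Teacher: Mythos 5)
Your overall framework is faithful to the paper's strategy (Fourier transform, decompose $G_\xi = \tilde H_\xi + R_\xi$, split $B_\xi = -\int_0^\infty e^{-|\xi|z}N_\xi\,dz$ at $z = 1+\mu$, pair each piece against the appropriate norm, then sup in $y$), but there is a genuine misattribution that would block you when you try to close the estimate. You claim that the residual kernel $R_\xi$ ``pairs only with $X_{\mu,s}$ and produces no $1/\sqrt{t-s}$,'' and that the $1/s^\beta$ factor arises from $w_s(z)^{-1}$ in the pointwise estimate of $B_\xi$. Both are wrong, and together they make the bookkeeping incompatible with the statement: if the $1/s^\beta$ came from $w_s(z)^{-1}$ (which only appears when you invoke $X_{\mu,s}$), you would land a $1/s^\beta$ in front of $\Vert \partial_x^i N(s)\Vert_{X_{\mu,s}}$ --- but the statement has only $(1 + \nu^{-1/2}\mathbbm{1}_{\{t\le\nu^2/\gamma^2\}})$ there. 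Moreover $w_s(z)^{-1}$ cannot produce $s^{-\beta}$ at all: on the short-time branch $w_s(z) \ge w(z\nu^{2\beta}/(s^\beta\gamma^{2\beta}))\ge \sqrt{\nu}$, and when the linear regime of $w$ is active, $w_s(z)^{-1}\lesssim s^\beta\gamma^{2\beta}/(z\nu^{2\beta})$, i.e., $s^\beta$ in the numerator.

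In the paper, the $1/s^\beta$ in the first line of the lemma actually originates from the opposite side of the duality: from bounding the \emph{output} weight $w_t(y)$. Using $s<t$ and $w(a)\lesssim\sqrt{\nu}+a$, one has $w(y\nu^{2\beta}/(t^\beta\gamma^{2\beta})) \lesssim \sqrt{\nu}+ y/s^\beta$ (the paper's \eqref{eq3}). This bound is what gets multiplied by the \emph{non-singular} component $b e^{-\theta_0 by/2}$ of $R_\xi$: splitting $b=|\xi|+\nu^{-1/2}$, the piece $\frac{y}{s^\beta}\cdot\frac{1}{\sqrt{\nu}}e^{-\theta_0 y/(2\sqrt{\nu})}\lesssim\frac{1}{s^\beta}$ is then paired with the $\mathcal{L}^1_\mu$-bound on $B_\xi$ (hence $Y_\mu$ in the statement), while the $|\xi|$-piece pairs with $\mathcal{L}^\infty_{\mu,\nu,s}$ after absorbing $|\xi|e^{-|\xi|z/2}$ into the $z$-integral, giving the $(1+\nu^{-1/2}\mathbbm{1})$ coefficient on $X_{\mu,s}$. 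So $R_\xi$ feeds \emph{both} lines, not just the $X_{\mu,s}$ line, and the heat kernel $\tilde H_\xi$ is responsible only for the $\nu^{-1/2}(t-s)^{-1/2}$ part, not for the $s^{-\beta}$. If you carry out your plan as stated you will either produce an extraneous $s^{-\beta}\Vert\cdot\Vert_{X_{\mu,s}}$ or fail to produce the $s^{-\beta}\Vert\cdot\Vert_{Y_\mu}$ that the statement requires; the fix is to route the $s^{-\beta}$ through the $w_t(y)\,be^{-\theta_0 by/2}$ pairing against $Y_\mu$ rather than through $w_s(z)^{-1}$ against $X_{\mu,s}$.
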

\begin{proof}
For $\xi \in \mathbb{Z}$, the kernel $G_{\xi}(t-s,y,0)$ is the sum of two trace operators:
\begin{equation*}
T_1(t-s,y)= \tilde{H}_{\xi}(t-s,y,0) = \frac{2}{\sqrt{\nu (t-s)}} e^{-\frac{y^2}{4 \nu (t-s)}} e^{-\gamma \xi^2 (t-s)},
\end{equation*}
and
\begin{equation*}
T_2(t-s,y) = R_{\xi}(t-s,y,0).
\end{equation*}
For the operators $T_1$ and $T_2$, we have the following estimates:
\begin{equation*}
y \partial_y T_1(t-s,y) \lesssim \frac{1}{\sqrt{\nu (t-s)}} e^{-\frac{y^2}{8 \nu (t-s)}} e^{-\gamma \xi^2 (t-s)},
\end{equation*}
and
\begin{equation}\label{kernel T2}
|T_2(t-s,y)| + |y \partial_yT_2(t-s,y)| \lesssim b e^{-\frac{\theta_0}{2} by } + \frac{1}{\sqrt{\nu(t-s)}} e^{-\frac{\theta_0}{2} \frac{y^2}{\nu(t-s)}} e^{-\frac{\nu \xi^2 (t-s)}{8}}.
\end{equation}
Moreover, by representation formula, we have
\begin{equation*}
\begin{aligned}
(\partial_z \Delta^{-1}N_{\xi}(s,z))|_{z=0}=& -\int_0^{\infty} e^{-|\xi|z} N_{\xi}(s,z) dz\\
=&-\int_0^{1+\mu} e^{-|\xi|z} N_{\xi}(s,z) dz -\int_{1+\mu}^{\infty} e^{-|\xi|z} N_{\xi}(s,z) dz \\
:=&I_1 +I_2.
\end{aligned}
\end{equation*}
We take care of contribution from the operator $T_1$ first.\\
\textbf{(1) For the $I_1$ part, we have:}
\begin{equation*}
\begin{aligned}
&\sup_{0 \leq y \leq 1+\mu} |e^{\e_0(1+\mu-y)_+|\xi|} w_t(y) T_1(t-s,y) I_1|\\
\lesssim& \sup_{0 \leq y \leq 1+\mu} e^{\e_0(1+\mu-y)_+|\xi|} w_t(y) \frac{2}{\sqrt{\nu (t-s)}} e^{-\frac{y^2}{4 \nu (t-s)}} e^{-\gamma \xi^2 (t-s)} \int_0^{1+\mu} e^{-|\xi|z} N_{\xi}(s,z) dz\\
\lesssim& \sup_{0 \leq y \leq 1+\mu} \frac{w_t(y)}{\sqrt{\nu (t-s)}} e^{-\frac{y^2}{4 \nu (t-s)}} \int_0^{1+\mu} e^{\e_0(z-y)_+|\xi|} e^{-|\xi|z} e^{\e_0(1+\mu-z)_+|\xi|} N_{\xi}(s,z) dz\\
\lesssim& \sup_{0 \leq y \leq 1+\mu}  \frac{w_t(y)}{\sqrt{\nu (t-s)}} e^{-\frac{y^2}{4 \nu (t-s)}}  \int_0^{1+\mu}e^{\e_0(1+\mu-z)_+|\xi|} N_{\xi}(s,z) dz\\
\lesssim& \sup_{0 \leq y \leq 1+\mu} \frac{w\left(y\frac{\nu^{2\beta} }{t^\beta\gamma^{2\beta}}\right) + \left(1-\frac{t \gamma^2}{\nu^2}\right)_+}{\sqrt{\nu (t-s)}} \Vert e^{\e_0(1+\mu-z)_+|\xi|} N_{\xi}(s,z)  \Vert_{\mathcal{L}^1_{\mu}}.
\end{aligned}
\end{equation*}
Noticing that
\begin{equation*}
w\left(y\frac{\nu^{2\beta}}{t^\beta \gamma^{2\beta}}\right) \lesssim \sqrt{\nu} e^{\frac{y \nu^{2\beta}}{C \sqrt{\nu} t^\beta \gamma^{2\beta}}} \lesssim\sqrt{\nu} e^{\frac{y \nu^{2\beta}}{C \gamma_0^{2\beta}\sqrt{\nu} t^\beta }},
\end{equation*}
we have
\begin{equation*}
\begin{aligned}
\frac{1}{\sqrt{\nu(t-s)}} w\left(y\frac{\nu^{2\beta}}{t^\beta \gamma^{2\beta}}\right)e^{-\frac{y^2}{4 \nu (t-s)}}\lesssim& \frac{1}{\sqrt{t-s}} e^{\frac{y}{C \gamma_0^{2\beta} \sqrt{\nu} t^\beta}}e^{-\frac{y^2}{4 \nu (t-s)}}\\
\lesssim& \frac{1}{\sqrt{t-s}} e^{\frac{y}{C\gamma_0^{2\beta} \sqrt{\nu} t^\beta}}e^{-\frac{y^2}{4 \nu t}}\\
\lesssim& \frac{1}{\sqrt{t-s}} e^{\frac{y}{C\gamma_0^{2\beta} \sqrt{\nu} t^\beta}}e^{-\frac{y}{2\sqrt{ \nu t}}}.\\
\end{aligned}
\end{equation*}
Now since $\beta \in (0, 1/2)$, we can pick $C$ large enough so that 
\begin{equation*}
\frac{w\left(y \frac{\nu^{2\beta}}{t^\beta\gamma^{2\beta}}\right) }{\sqrt{\nu(t-s)}} \lesssim \frac{1}{\sqrt{t-s}}.
\end{equation*}
On the other hand, we notice that
\begin{equation*}
\begin{aligned}
\frac{\left(1-\frac{t \gamma^2}{\nu^2}\right)_+}{\sqrt{\nu (t-s)}} \lesssim \mathbbm{1}_{\{0 \leq t \leq \nu^2/\gamma^2\}} \frac{1}{\sqrt{\nu (t-s)}}. 
\end{aligned}
\end{equation*}
Therefore, we obtain
\begin{equation*}
\begin{aligned}
&\sup_{0 \leq y \leq 1+\mu} |e^{\e_0(1+\mu-y)_+|\xi|} w_t(y) T_1(t-s,y) I_1|\\ \lesssim& \frac{1}{\sqrt{t-s}} \left(1+ \frac{\mathbbm{1}_{\{ 0 \leq t \leq {\nu^2}/\gamma^2 \}}}{\sqrt{\nu}}\right) \Vert e^{\e_0(1+\mu-z)_+|\xi|} N_{\xi}(s,z)  \Vert_{\mathcal{L}^1_{\mu}}.
\end{aligned}
\end{equation*}
\textbf{(2) For the $I_2$ part:}
By the definition of the weight function $w_t(y)$ and Cauchy inequality, there is 
\begin{equation*}
\begin{aligned}
&\sup_{0 \leq y \leq 1+\mu} |e^{\e_0(1+\mu-y)_+|\xi|} w_t(y) T_1(t-s,y) I_2|\\
\lesssim& \sup_{0 \leq y \leq 1+\mu} e^{\e_0(1+\mu-y)_+|\xi|} w_t(y) \frac{2}{\sqrt{\nu (t-s)}} e^{-\frac{y^2}{4 \nu (t-s)}} e^{-\gamma \xi^2 (t-s)} \int_{1+\mu}^{\infty} e^{-|\xi|z} N_{\xi}(s,z) dz\\
\lesssim&\sup_{0 \leq y \leq 1+\mu} \int_0^{\infty} e^{\e_0(1+\mu-z)_+|\xi|} e^{e_0(z-y)_+|\xi|} e^{-|\xi|z} N_{\xi}(s,z) dz \cdot w_t(y)  \frac{1}{\sqrt{\nu (t-s)}} e^{-\frac{y^2}{4 \nu (t-s)}} e^{-\gamma \xi^2 (t-s)}\\
\lesssim& \sup_{0 \leq y \leq 1+\mu}\int_0^{\infty}  N_{\xi}(s,z) dz \cdot \left(w\left(y \frac{\nu^{2\beta}}{t^\beta \gamma^{2\beta}}\right) +\left(1- \frac{t \gamma^2}{\nu^2}\right)_+\right)  \frac{1}{\sqrt{\nu (t-s)}} e^{-\frac{y^2}{4 \nu (t-s)}} e^{-\gamma \xi^2 (t-s)}\\
\lesssim&  \sup_{0 \leq y \leq 1+\mu}\Vert z N_{\xi}(s,z) \Vert_{L^2(z \geq 1+\mu)} \left(\sqrt{\nu} e^{\frac{y \nu^{2\beta}}{C \gamma^{2\beta}\sqrt{\nu} t^\beta}} + \left(1-\frac{t \gamma^2}{\nu^2}\right)_+\right)  \frac{1}{\sqrt{\nu (t-s)}} e^{-\frac{y^2}{4 \nu (t-s)}} e^{-\gamma \xi^2 (t-s)}\\
\lesssim&  \sup_{0 \leq y \leq 1+\mu}\Vert z N_{\xi}(s,z) \Vert_{L^2(z \geq 1+\mu)} \left(\frac{1}{\sqrt{t-s}} e^{\frac{y}{C \gamma_0^{2\beta} \sqrt{\nu}t^\beta}}\cdot e^{-\frac{y}{2\sqrt{\nu t}}} + \left(1-\frac{t \gamma^2}{\nu^2}\right)_+ \frac{1}{\sqrt{\nu(t-s)}}\right).
\end{aligned}
\end{equation*}
Since $\beta \in (0, 1/2)$, we can take $C$ large enough so that
\begin{equation*}
\begin{aligned}
&\sup_{0 \leq y \leq 1+\mu} |e^{\e_0(1+\mu-y)_+|\xi|} w_t(y) T_1(t-s,y) I_2| \\
\lesssim& \frac{1}{\sqrt{t-s}}\left(1+ \frac{\mathbbm{1}_{\{ 0 \leq t \leq {\nu^2}/\gamma^2 \}}}{\sqrt{\nu}}\right)\Vert z N_{\xi}(s,z) \Vert_{L^2(z \geq 1+\mu)}.
\end{aligned}
\end{equation*}
Therefore, we obtain
\begin{equation*}
\begin{aligned}
&\sup_{0 \leq y \leq 1+\mu} |e^{e_0(1+\mu-y)_+|\xi|} w_t(y) T_1(t-s,y) \partial_z \Delta^{-1}N_{\xi}(s,z)|_{z=0}|\\
\lesssim&  \frac{1}{\sqrt{t-s}}\left(1+ \frac{\mathbbm{1}_{\{ 0 \leq t \leq {\nu^2}/\gamma^2 \}}}{\sqrt{\nu}}\right)\\
&\ \ \ \ \ \ \ \ \ \ \ \ \ \ \ \ \ \ \ \ \ \ \times\left(\Vert z N_{\xi}(s,z) \Vert_{L^2(z \geq 1+\mu)} + \Vert e^{\e_0(1+\mu-z)_+|\xi|} N_{\xi}(s,z)  \Vert_{\mathcal{L}^1_{\mu}}\right).
\end{aligned}
\end{equation*}
We now take care of the $T_2$ contribution.\\
\textbf{(1) For the $I_1$ part, we have:}
\begin{equation*}
\begin{aligned}
&\sup_{0 \leq y \leq 1+\mu} |e^{\e_0(1+\mu-y)_+|\xi|} w_t(y) b e^{-\frac{1}{2}\theta_0 by } I_1|\\
\lesssim&\sup_{0 \leq y \leq 1+\mu} \left|e^{\e_0(1+\mu-y)_+|\xi|} \left(w\left(y\frac{\nu{2^\beta}}{t^\beta \gamma^{2\beta}}\right) + \left(1- \frac{t \gamma^2}{\nu^2}\right)_+\right)\right|\\
&\ \ \ \ \ \ \ \ \ \ \ \ \ \ \ \ \ \ \ \times \left|b e^{-\frac{1}{2}\theta_0 by } \int_0^{1+\mu} e^{-|\xi|z} N_{\xi}(s,z) dz\right|\\
\lesssim&\sup_{0 \leq y \leq 1+\mu} \left| \left(w\left(y\frac{\nu^{2\beta}}{t^\beta \gamma^{2\beta}}\right) + \left(1- \frac{t \gamma^2}{\nu^2}\right)_+\right) b e^{-\frac{1}{2}\theta_0 by }\right|\\
&\ \ \ \ \ \ \ \ \ \ \ \ \ \ \ \ \ \ \ \ \times  \left|\int_0^{1+\mu} e^{\e_0(1+\mu-z)_+|\xi|}e^{\e_0(z-y)_+|\xi|}e^{-|\xi|z} N_{\xi}(s,z) dz \right|.\\
\end{aligned}
\end{equation*}
Recalling that for $s<t$, there is
\begin{equation}\label{eq3}
w\left(y\frac{\nu^{2\beta}}{t^\beta \gamma^{2\beta}}\right) \lesssim \sqrt{\nu} + \frac{1}{\gamma_0^{2\beta}}\frac{y}{s^\beta} \nu^{2\beta} \lesssim \sqrt{\nu} + \frac{y}{s^\beta}.
\end{equation}
Hence, we have
\begin{equation*}
\begin{aligned}
&\sup_{0 \leq y \leq 1+\mu} |e^{e_0(1+\mu-y)_+|\xi|} w_t(y) T_2(t-s,y) \partial_z \Delta^{-1}N_{\xi}(s,z)|_{z=0}|\\
\lesssim&\sup_{0\leq y \leq 1+\mu}\left(\sqrt{\nu} + \frac{y}{s^\beta} + \mathbbm{1}_{\{ 0 \leq t \leq {\nu^2}/\gamma^2 \}}\right) be^{-\frac{\theta_0}{2} by}\\
& \ \ \ \ \ \ \ \ \ \ \ \ \ \ \ \ \ \ \ \ \ \ \times \int_0^{1+\mu} e^{\e_0(1+\mu-z)_+|\xi|} e^{\e_0(z-y)_+|\xi|} e^{-|\xi|z} N_{\xi}(s,z) dz\\
\lesssim& \sup_{0 \leq y \leq 1+\mu} \left(\sqrt{\nu} + \frac{y}{s^\beta} + \mathbbm{1}_{\{ 0 \leq t \leq {\nu^2}/\gamma^2 \}}\right) be^{-\frac{\theta_0}{2} by}\\
&\ \ \ \ \ \ \ \ \ \ \ \ \ \ \ \ \ \ \ \ \ \ \ \ \ \times \int_0^{1+\mu} e^{\e_0(1+\mu-z)_+|\xi|} N_{\xi}(s,z) e^{-|\xi|z}dz.
\end{aligned}
\end{equation*}
Recalling that $b= |\xi| + \frac{1}{\sqrt{\nu}}$, we have
\begin{equation*}
\begin{aligned}
&\sup_{0 \leq y \leq 1+\mu} |e^{e_0(1+\mu-y)_+|\xi|} w_t(y) T_2(t-s,y) \partial_z \Delta^{-1}N_{\xi}(s,z)|_{z=0}|\\
\lesssim& \sup_{0 \leq y \leq 1+\mu} \left(1 + \frac{y}{\sqrt{\nu} s^\beta}\right) e^{-\frac{\theta_0}{2} by} \int_0^{1+\mu} e^{\e_0(1+\mu-z)_+|\xi|} N_{\xi}(s,z)e^{-\frac{1}{2}|\xi|z} dz\\
&+  \left(\sqrt{\nu} + \frac{y}{s^\beta}\right)  \int_0^{1+\mu} e^{\e_0(1+\mu-z)_+|\xi|} N_{\xi}(s,z) |\xi| e^{-\frac{1}{2}|\xi|z}dz\\
&+\mathbbm{1}_{\{ 0 \leq t \leq {\nu^2}/\gamma^2 \}}  \frac{1}{\sqrt{\nu}}e^{-\frac{\theta_0}{2} by} \int_0^{1+\mu} e^{\e_0(1+\mu-z)_+|\xi|} N_{\xi}(s,z) e^{-\frac{1}{2}|\xi|z}dz\\
&+\mathbbm{1}_{\{ 0 \leq t \leq {\nu^2}/\gamma^2 \}} e^{-\frac{\theta_0}{2} by} \int_0^{1+\mu} e^{\e_0(1+\mu-z)_+|\xi|} N_{\xi}(s,z) |\xi| e^{-\frac{1}{2}|\xi|z}dz.
\end{aligned}
\end{equation*}
By the fact that $\nu \lesssim w_s(z) $, there is
\begin{equation*}
\begin{aligned}
&\sup_{0 \leq y \leq 1+\mu} |e^{e_0(1+\mu-y)_+|\xi|} w_t(y) T_2(t-s,y) \partial_z \Delta^{-1}N_{\xi}(s,z)|_{z=0}|\\
\lesssim&  \sup_{0 \leq y \leq 1+\mu} \left(1 + \frac{1}{s^\beta}\frac{y}{\sqrt{\nu}} e^{-\frac{\theta_0}{2} by}\right) \Vert e^{\e_0(1+\mu-z)_+|\xi|} N_{\xi}(s,z)\Vert_{\mathcal{L}^1_{\mu}}\\
&+  \int_0^{1+\mu}  \left(w_s(z) + \frac{1+\mu}{s^\beta}\right) e^{\e_0(1+\mu-z)_+|\xi|} N_{\xi}(s,z) |\xi| e^{-\frac{1}{2}|\xi|z}dz\\
&+\mathbbm{1}_{\{ 0 \leq t \leq {\nu^2}/\gamma^2 \}}  \frac{1}{\sqrt{\nu}}\Vert e^{\e_0(1+\mu-z)_+|\xi|} N_{\xi}(s,z) e^{-\frac{1}{2}|\xi|z}\Vert_{\mathcal{L}^1_{\mu}}\\
&+\mathbbm{1}_{\{ 0 \leq t \leq {\nu^2}/\gamma^2 \}} \int_0^{1+\mu} e^{\e_0(1+\mu-z)_+|\xi|} N_{\xi}(s,z) |\xi| e^{-\frac{1}{2}|\xi|z}\frac{1}{\sqrt{\nu}} w_s(z)dz\\
\lesssim&\sup_{0 \leq y \leq 1+\mu}  \frac{1}{s^\beta} \Vert e^{\e_0(1+\mu-z)_+|\xi|} N_{\xi}(s,z)\Vert_{\mathcal{L}^1_{\mu}}\\
&+  \int_0^{1+\mu} e^{\e_0(1+\mu-z)_+|\xi|} N_{\xi}(s,z) w_s(z)|\xi| e^{-\frac{1}{2}|\xi|z}dz \\
&+ \frac{1}{s^\beta} \int_0^{1+\mu} e^{\e_0(1+\mu-z)_+|\xi|} N_{\xi}(s,z) dz\\
&+\mathbbm{1}_{\{ 0 \leq t \leq {\nu^2} /\gamma^2\}}  \frac{1}{\sqrt{\nu}}\Vert e^{\e_0(1+\mu-z)_+|\xi|} N_{\xi}(s,z) e^{-\frac{1}{2}|\xi|z}\Vert_{\mathcal{L}^1_{\mu}}\\
&+\mathbbm{1}_{\{ 0 \leq t \leq {\nu^2} /\gamma^2\}} \int_0^{1+\mu} e^{\e_0(1+\mu-z)_+|\xi|} N_{\xi}(s,z) |\xi| e^{-\frac{1}{2}|\xi|z}\frac{1}{\sqrt{\nu}} w_s(z)dz.\\
\end{aligned}
\end{equation*}
Finally, recalling the definition of $\Vert \cdot \Vert_{\mathcal{L}_\mu^1}$ and $\Vert \cdot \Vert_{\mathcal{L}^{\infty}_{\mu, \nu, s}}$, we have
\begin{equation*}
\begin{aligned}
&\sup_{0 \leq y \leq 1+\mu} |e^{e_0(1+\mu-y)_+|\xi|} w_t(y) T_2(t-s,y) \partial_z \Delta^{-1}N_{\xi}(s,z)|_{z=0}|\\
\lesssim&\sup_{0 \leq y \leq 1+\mu} \left( \frac{1}{s^\beta} + \mathbbm{1}_{\{ 0 \leq t \leq {\nu^2}/\gamma^2 \}}  \frac{1}{\sqrt{\nu}}\right) \Vert e^{\e_0(1+\mu-z)_+|\xi|} N_{\xi}(s,z)\Vert_{\mathcal{L}^1_{\mu}} \\
&+ \left(\mathbbm{1}_{\{ 0 \leq t \leq {\nu^2}/\gamma^2 \}}  \frac{1}{\sqrt{\nu}} + \int_0^{1+\mu} |\xi| e^{-\frac{1}{2} |\xi| z} dz \right) \Vert  e^{\e_0(1+\mu-z)_+|\xi|} N_{\xi}(s,z)\Vert_{\mathcal{L}^{\infty}_{\mu, \nu, s}}.
\end{aligned}
\end{equation*}
\textbf{(2) For the $I_2$ part:}  Applying estimate \eqref{eq3}, there is
\begin{equation*}
\begin{aligned}
&\sup_{0 \leq y \leq 1+\mu} |e^{\e_0(1+\mu -y)_+|\xi|} w_t(y) b e^{-\frac{1}{2} \theta_0 by} I_2|\\
\lesssim& \sup_{0 \leq y \leq 1+\mu} |e^{\e_0(1+\mu -y)_+|\xi|} \left(\sqrt{\nu} + \frac{y}{s^\beta} + \left(1-\frac{t\gamma^2}{\nu^2}\right)_+\right) b e^{-\frac{1}{2} \theta_0 by} \int_{1+\mu}^{\infty} e^{-|\xi| z} N_{\xi}(s,z) dz|.\\
\end{aligned}
\end{equation*}
Let $\e_0$ be small enough such that
$
e^{\e_0(1+\mu -y)_+|\xi|} e^{-\frac{1}{4}\theta_0 by} \lesssim 1
$. 
Then, we have
\begin{equation*}
\begin{aligned}
&\sup_{0 \leq y \leq 1+\mu} |e^{\e_0(1+\mu -y)_+|\xi|} w_t(y) b e^{-\frac{1}{4} \theta_0 by} I_2|\\
\lesssim& \sup_{0 \leq y \leq 1+\mu} \left| \left(\sqrt{\nu} + \frac{y}{s^\beta} + \left(1-\frac{t \gamma^2}{\nu^2}\right)_+\right) b  e^{-\frac{1}{4} \theta_0 by}\int_{1+\mu}^{\infty} e^{-|\xi| z} N_{\xi}(s,z) dz\right|\\
\lesssim&  \sup_{0 \leq y \leq 1+\mu} \left(b\sqrt{\nu}  e^{-\frac{1}{4} \theta_0 by} + \frac{1}{s^\beta} by e^{-\frac{1}{4} \theta_0 by} + \frac{1}{\sqrt{\nu}} \mathbbm{1}_{\{ 0 \leq t \leq {\nu^2}/\gamma^2\}}  b\sqrt{\nu}  e^{-\frac{1}{4} \theta_0 by}\right)\\
&\ \ \ \ \ \ \ \ \ \ \ \ \ \ \ \ \ \ \ \ \ \ \ \ \ \ \ \ \ \ \ \ \ \ \ \ \ \ \ \ \ \ \ \ \ \ \ \ \ \ \ \ \ \ \ \ \ \ \ \ \ \ \ \ \ \ \ \ \ \ \ \ \ \ \ \ \ \ \ \ \ \ \ \ \ \ \ \ \ \  \ \ \ \times\Vert z N_{\xi}(s,z) \Vert_{L^2(z \geq 1+\mu)}\\
\lesssim&\sup_{0 \leq y \leq 1+\mu} \left(\frac{1}{\sqrt{\nu}} \mathbbm{1}_{\{ 0 \leq t \leq {\nu^2}/\gamma^2\}} + \frac{1}{s^{\beta}}\right)\Vert z N_{\xi}(s,z) \Vert_{L^2(z \geq 1+\mu)}.
\end{aligned}
\end{equation*}
The rest part of the estimate involving $T_2$ is the same as that of $T_1$. Hence we have proved the lemma for the case $(i,j)=(0,0)$.  The proof for the case $(i,j)=(0,1)$ follows in the same manner since $y\partial_y G$ and $G$ share similar bounds. For $(i,j)=(1,0)$, we apply the same estimate as in the case $(i,j)=(0,0)$ to $\partial_x N(s,z)$.
\end{proof}

\begin{Lemma}\label{X4}
Let $\mu \in (0, \mu_0- \gamma s)$ be arbitrary. There is
\begin{equation}
\begin{aligned}
&\sum_{0 \leq i+j \leq 2} \left\Vert \partial_x^i (y \partial_y)^j \int_0^{\infty} G(t,y,z) \omega_0(z) \right\Vert_{X_{\mu,t}}\\
\lesssim& \sum_{0 \leq i+j \leq 2} \Vert \partial_x^i (y\partial_y)^j \omega_0\Vert_{X_{\mu,t}} + \sum_{0 \leq i+j \leq 2} \sum_{\xi} \Vert \xi^i \partial_y^i \omega_{0,\xi} \Vert_{L^{\infty}_{y \geq 1+\mu}}.
\end{aligned}
\end{equation}
\end{Lemma}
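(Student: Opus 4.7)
The plan is to mimic the strategy of Lemmas \ref{l1} and \ref{l2}, splitting $G_\xi = \tilde H_\xi + R_\xi$ and analyzing each kernel contribution separately in the Fourier mode $\xi$. The main simplification here, compared to the nonlinear estimates, is that the initial time is $s=0$, so $w_0(z) \equiv 1$ and the weight ratio reduces to $w_t(y)/w_0(z) = w_t(y) \lesssim 1$; in particular no time singularity arises from the ratio and there is no analytic radius shrinkage to pay for. Since $\omega_0$ is only analytic in the strip up to $y \le 1+\mu$ (and merely Sobolev beyond), we will have to separate the integration in $z$ at the level $z = 1+\mu$ and use the tail $L^\infty$ bound on $\omega_0$ for the far-field.

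For the heat kernel part with weight $(y\partial_y)^j$, $j \ge 1$, I would insert the cutoff $\phi(z/y)$ exactly as in Lemma \ref{l1} and integrate by parts, producing four terms: the near-diagonal pieces $I_1, I_2$ with $z \le 3y/4$, the interior piece $I_3$ on $[y/2, 1+\mu]$, and the far-field $I_4$ on $[1+\mu, \infty)$. Since $w_0 \equiv 1$, the weight estimate \eqref{eq1} specializes to $w_t(y) e^{-(y-z)^2/(c^2\nu t)} \lesssim 1$, which immediately produces the required convolution bound
\begin{equation*}
\sup_{0 \leq y \leq 1+\mu} \int_0^{3y/4} \frac{w_t(y)}{\sqrt{\nu t}} e^{-(y-z)^2/(8\nu t)}\,dz \lesssim 1.
\end{equation*}
The interior piece $I_3$ is controlled by the same analytic/Gaussian trade-off $e^{\e_0(z-y)_+|\xi|} \le e^{\e_0^2(z-y)^2/(2\nu t)} e^{\nu t |\xi|^2/2}$ used in \eqref{eq2}, which yields the contribution $\|e^{\e_0(1+\mu-z)_+|\xi|}(y\partial_y)^j \omega_{0,\xi}\|_{\mathcal{L}^\infty_{\mu,\nu,t}}$. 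For the residual kernel $R_\xi$, I would reuse the estimates of Lemma \ref{l2} verbatim, splitting the decay $b e^{-\theta_0 b(y+z)/2} + (\nu t)^{-1/2} e^{-\theta_0 (y+z)^2/(2\nu t)}$ against $w_t(y)$, with the same choice of parameters $\beta < 1/2$ and $\gamma > \gamma_0$ absorbing the weight.

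The main obstacle, and the reason the $L^\infty_{y \ge 1+\mu}$ term appears on the right, is the far-field piece $I_4$ with $z \ge 1+\mu$. In that region $\omega_0$ is not analytic, so the analytic exponential weight $e^{\e_0(1+\mu-z)_+|\xi|}$ provides no gain and we cannot iterate the trick of Lemma \ref{l1}. Instead, I would bound the kernel pointwise (using $w_t(y)\lesssim 1$ and the Gaussian decay at distance $\ge \mu_0 - \mu - \gamma\sqrt{t}$ from the support of $\omega_0$) and pair it with $\|(\partial_x)^i \partial_y^j \omega_{0,\xi}\|_{L^\infty_z(z \ge 1+\mu)}$; the Gaussian $e^{-(1+\mu-y)^2/(C\nu t)}$ is integrable in $z$ uniformly in $t$, which avoids the loss $(\mu_0-\mu-\gamma\sqrt t)^{-1/2}$ produced by an $L^2$ pairing. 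The boundary trace terms generated by integration by parts at $z=1+\mu$ are similarly absorbed into the same $L^\infty_{y \ge 1+\mu}$ quantity. Summing over $\xi \in \mathbb{Z}$ and over $i+j \le 2$ then gives the stated bound; the cases $(i,j) = (0,0), (1,0), (2,0)$ reduce to the $(0,0)$ case applied to $\partial_x^i \omega_0$, and the $j=2$ case follows from one further integration by parts, using the pointwise control $|y\partial_y R_\xi| \lesssim b + (\nu t)^{-1/2}$ and $|y\partial_y \tilde H_\xi| \lesssim (\nu t)^{-1/2}$ already recorded above.
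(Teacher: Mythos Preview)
Your simplification using $w_0(z)\equiv 1$ is where the argument does not quite close. Once you divide by $w_0(z)=1$ and bound the weighted kernel by a constant on the near-origin piece $I_1$ (where $z\le \tfrac{3}{4} y$, and likewise for the residual kernel), the quantity you pull out is the \emph{unweighted} supremum $\sup_z e^{\e_0(1+\mu-z)_+|\xi|}|\omega_{0,\xi}(z)|$, i.e.\ the $\mathcal L^\infty_{\mu,\nu,0}$ norm. The right-hand side of the lemma, however, carries $\|\partial_x^i(y\partial_y)^j\omega_0\|_{X_{\mu,t}}$, which is weighted by $w_t(z)$; since $w_t$ can be as small as $\sqrt\nu$ near the boundary, the two norms differ by up to a factor $1/\sqrt\nu$, and your bound does not imply the stated one.

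The paper proceeds differently: it measures $\omega_0$ with the weight $w_t$ --- effectively setting the source time to $s=t$ (not $s=0$) in the framework of Lemmas~\ref{l1}--\ref{l2} --- so the ratio to be controlled is $w_t(y)/w_t(z)$. For the heat kernel part this is declared to follow from Lemma~\ref{l1}; for the residual kernel it requires a genuinely new computation, splitting the $z$-integral at $z=t^\beta\nu^{-2\beta}$ and distinguishing the regimes $t\lessgtr\nu^2/(2\gamma^2)$ to obtain $J_{21}+J_{22}\lesssim 1$. Your handling of $I_3$ and the far-field $I_4$ via an $L^\infty$ pairing is fine and matches the lemma's tail term. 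For the downstream application in Theorem~\ref{thm0} your weaker bound with $X_{\mu,0}$ on the right would in fact suffice, since the paper immediately invokes $\|\cdot\|_{X_{\mu_0,t}}\lesssim\|\cdot\|_{X_{\mu_0,0}}$; but it does not establish the lemma as written.
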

\begin{proof}
Recalling that
\begin{equation*}
G_{\xi}(t,y,z) = H_{\xi}(t,y,z) + H_{\xi}(t,y,-z) + R_{\xi}(t,y,z),
\end{equation*}
we have
\begin{equation*}
\begin{aligned}
&\left\Vert (\partial_x)^i (y \partial_y)^j \int_{\mathbb{T} \times \mathbb{R}} G(t,y,z) \omega_0(z) dz \right\Vert_{X_{\mu,t}}\\
=& \sum_{\xi} \left\Vert e^{\e_0 (1+\mu-y)_+|\xi|}  (y\partial_y)^j \int_{\mathbb{R}^+} H_{\xi}(t,y,z) \xi^i \omega_{0,\xi}(z) dz\right\Vert_{\mathcal{L}^{\infty}_{\mu, \nu ,t}}\\
&+  \sum_{\xi} \left\Vert e^{\e_0 (1+\mu-y)_+|\xi|}  (y\partial_y)^j \int_{\mathbb{R}^+} \tilde{H}_{\xi}(t,y,z) \xi^i \omega_{0,\xi}(z) dz\right\Vert_{\mathcal{L}^{\infty}_{\mu, \nu ,t}}\\
&+ \sum_{\xi} \left\Vert e^{\e_0 (1+\mu-y)_+|\xi|}  (y\partial_y)^j \int_{\mathbb{R}^+} R_{\xi}(t,y,z) \xi^i \omega_{0,\xi}(z) dz\right\Vert_{\mathcal{L}^{\infty}_{\mu, \nu ,t}}\\
=&J_1+ J_1' + J_2.
\end{aligned}
\end{equation*}
The estimate of \[\left\Vert e^{\e_0 (1+\mu-y)_+|\xi|}  (y\partial_y)^j \int_{\mathbb{R}^+} H_{\xi}(t,y,z) \xi^i \omega_{0,\xi}(z) dz\right\Vert_{\mathcal{L}^{\infty}_{\mu, \nu ,t}}\] follows exactly the same as Lemma \ref{l1}, where we simply notice that $s=t$ is a special case of $s\leq t$. We next proceed to estimate the quantity
 \[\left\Vert e^{\e_0 (1+\mu-y)_+|\xi|}  (y\partial_y)^j \int_{\mathbb{R}^+} R_{\xi}(t,y,z) \xi^i \omega_{0,\xi}(z) dz\right\Vert_{\mathcal{L}^{\infty}_{\mu, \nu ,t}}.\] 
By the same argument in Lemma \ref{l2}, we only need to consider the case when $(i,j)=(0,1)$. Similar to Lemma \ref{l2}, there is
\begin{equation*}
\begin{aligned}
&\int_0^{\frac{3}{4}y} b e^{-\frac{\theta}{2}b(y+z)} \frac{w_t(y)}{w_t(z)} dz\\
=&\int_0^{\frac{3}{4}y} b e^{-\frac{\theta}{2}b(y+z)} \frac{w\left(y \frac{\nu^{2\beta}}{\gamma^{2\beta} t^\beta}\right) + \left(1-\frac{t\gamma^2}{\nu^2}\right)_+}{w\left(z \frac{\nu^{2\beta}}{\gamma^{2\beta} t^\beta}\right) + \left(1- \frac{t\gamma^2}{\nu^2}\right)_+}dz\\
\lesssim& \int_0^{t^{\beta} \nu^{-2\beta}} b e^{-\frac{\theta}{2}b(y+z)} \frac{y\frac{\nu^{2\beta}}{\gamma^{2\beta} t^\beta}+ \sqrt{\nu} + \left(1-\frac{t \gamma^2}{\nu^2}\right)_+}{\sqrt{\nu} + \left(1- \frac{t \gamma^2}{\nu^2}\right)_+}dz + \int_{t^{\beta} \nu^{-2\beta}}^{\frac{3}{4}y} b e^{-\frac{\theta_0}{2}b(y+z)} dz\\
\lesssim& \int_0^{t^{\beta} \nu^{-2\beta}} b e^{-\frac{\theta}{2}b(y+z)} \frac{y\frac{\nu^{2\beta}}{\gamma^{2\beta} t^{\beta}}+ \sqrt{\nu} + \left(1-\frac{t \gamma^2}{\nu^2}\right)_+}{\sqrt{\nu} + \left(1- \frac{t \gamma^2}{\nu^2}\right)_+}dz + \int_{t^{\beta} \nu^{-2\beta}}^{\frac{3}{4}y} b e^{-\frac{\theta_0}{2}b(y+z)} dz\\
:=& J_{21} + J_{22}.
\end{aligned} 
\end{equation*}
We first estimate $J_{21}$ by splitting it into two cases.
\begin{enumerate}
\item When $t < \nu^2/2\gamma^2$, we have
\begin{equation*}
\begin{aligned}
J_{21} \lesssim& \int_0^{t^{\beta} \nu^{-2\beta}}b e^{-\frac{\theta}{2}b(y+z)} \left(1+ y \frac{\nu^{2\beta}}{t^{\beta}}\right) dz\\
=&\int_0^{t^{\beta} \nu^{-2\beta}}b e^{-\frac{\theta}{2}bz}  dz \cdot e^{-\frac{\theta_0}{2} by} + \int_0^{t^{\beta} \nu^{-2\beta}} e^{-\frac{\theta}{2}bz}   \frac{\nu^{2\beta}}{t^{\beta}} dz \cdot by e^{-\frac{\theta_0}{2}by}\\
\lesssim& \int_0^{\infty}b e^{-\frac{\theta}{2}bz}  dz + t^{\beta} \nu^{-2\beta} \frac{\nu^{2\beta}}{t^{\beta}} \lesssim1.
\end{aligned}
\end{equation*}
\item When $t> \nu^2/2\gamma^2$, the fact that
$
\frac{1}{t^\beta} \lesssim \nu^{-2\beta}
$ 
implies
$
y\frac{\nu^{2\beta}}{t^\beta} \lesssim y.
$
Therefore, we obtain
\begin{equation*}
\begin{aligned}
J_{21} \lesssim& \int_0^{t^{\beta} \nu^{-2\beta}} be^{-\frac{\theta_0}{2}b(y+z)}\frac{y}{\sqrt{\nu}} dz
\lesssim by e^{-\frac{\theta_0}{2}by} \int_0^{t^{\beta} \nu^{-2\beta}} e^{-\frac{\theta_0}{2}bz} dz \cdot \frac{1}{\sqrt{\nu}}\\
\lesssim& \int_0^{\infty} e^{-\frac{\theta_0}{2}bz} d\frac{\theta_0}{2}bz \cdot \frac{2}{\theta_0 b\sqrt{\nu}} \lesssim 1.
\end{aligned}
\end{equation*}
\end{enumerate}
To estimate $J_{22}$, we simply notice that
\begin{equation*}
J_{22}  \lesssim \int_0^{\infty} be^{-\frac{\theta_0}{2}bz} dz e^{-\frac{\theta_0}{2} by} \lesssim 1,
\end{equation*}
and the proof is concluded.
\end{proof}

\section{Estimate of the analytic norm and Sobolev norm}
In this section, the estimates hold for all $t \in[0, T]$. 
\begin{Lemma}
	\label{X:anal}
Let
\begin{equation}
\mu_1=\mu +\frac{1}{4}(\mu_0- \mu-\gamma \sqrt{s}),\ \ \  \mu_2= \mu + \frac{1}{2}(\mu_0- \mu-\gamma \sqrt{s}).
\end{equation}
We have
\begin{equation}
\begin{aligned}
&(\mu_0-\mu-\gamma \sqrt{s}) \sum_{i+j=2} \left\Vert \partial_x^i (y\partial_y)^j  \int_0^{\infty} G(t-s,y,z) N(s,z) dz\right\Vert_{X_{\mu,t}}\\
& + \sum_{i+j \leq 1} \left\Vert \partial_x^i (y\partial_y)^j  \int_0^{\infty} G(t-s,y,z) N(s,z) dz \right\Vert_{X_{\mu_1,t}}\\
\lesssim&\sum_{i+j \leq 1} \Vert \partial_x^i (y\partial_y)^j N(s) \Vert_{X_{\mu_2,t}} + \frac{1}{\sqrt{\mu_0- \mu-\gamma \sqrt{s}}} \sum_{i+j \leq 1} \Vert \partial_x^i \partial_y^j N(s) \Vert_{S_{\mu_2}},
\end{aligned}
\end{equation}

\begin{equation}
\begin{aligned}
&(\mu_0-\mu-\gamma \sqrt{s})  \sum_{i+j=2} \Vert \partial_x^i (y\partial_y)^j G(t-s,y,0) B(s) \Vert_{X_{\mu,t}}\\
& + \sum_{i+j \leq2}  \Vert \partial_x^i (y\partial_y)^j G(t-s,y,0) B(s) \Vert_{X_{\mu,t}}\\
\lesssim &\frac{1}{\sqrt{t-s}} \left(\frac{1}{\sqrt{\nu}} \mathbbm{1}_{\{0 \leq t \leq {\nu^2}/\gamma^2 \}} + \frac{1}{s^\beta}\right) (\sum_{i\leq1}\Vert \partial_x^i N(s) \Vert_{Y_{\mu_1}} + \Vert \partial_x^i N(s) \Vert_{S_{\mu_1}})\\
&+ \left(\frac{1}{\sqrt{\nu}} \mathbbm{1}_{\{0 \leq t \leq {\nu^2}/\gamma^2 \}} +1\right) \Vert \partial_x^i N(s) \Vert_{X_{\mu_1,s}},
\end{aligned}
\end{equation}
and
\begin{equation}
\begin{aligned}
\sum_{i+j \leq 2} \left\Vert \partial_x^i (y\partial_y)^j \int_0^{\infty} G(t,y,z) \omega_0(z)dz\right\Vert_{X_{\mu,t}} \lesssim& \sum_{i+j \leq 2}\Vert \partial_x^i (y\partial_y)^j \omega_0 \Vert_{X_{\mu,t}}\\
& + \sum_{i+j \leq 2} \sum_{\xi} \Vert \xi^i \partial_y^j \omega_{0, \xi} \Vert_{L^{\infty}(y \geq 1+\mu)}.
\end{aligned}
\end{equation}

\end{Lemma}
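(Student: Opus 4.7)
The three estimates are the $\xi$-summed, multi-derivative upgrades of Corollaries~\ref{X1}, \ref{X2}, Lemma~\ref{X3}, and Lemma~\ref{X4}. The $i+j \leq 1$ parts follow directly by specializing those results to an intermediate analyticity radius; the $i+j = 2$ parts are extracted from the $i+j \leq 1$ bounds by a Cauchy-type argument that trades a derivative for a loss factor $(\mu_1 - \mu)^{-1} \sim (\mu_0 - \mu - \gamma \sqrt{s})^{-1}$, which is precisely the prefactor appearing on the left-hand side.

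For the first estimate, I split $G = \tilde H + R$ and apply Corollaries~\ref{X1} and~\ref{X2} with the assignment $\mu \mapsto \mu_1$, $\tilde\mu \mapsto \mu_2$. The admissibility hypothesis $\tilde\mu - \mu \geq c^{-1}(\mu_0 - \mu - \gamma \sqrt{s})$ is met with $c = 3$, since $\mu_2 - \mu_1 = \tfrac14(\mu_0 - \mu - \gamma \sqrt{s})$ and $\mu_0 - \mu_1 - \gamma \sqrt{s} = \tfrac34(\mu_0 - \mu - \gamma \sqrt{s})$. Summing in $\xi$, the $L^2(y \geq 1 + \mu_2)$ term from each corollary is absorbed into the $S_{\mu_2}$ norm (using $y \gtrsim 1$ on the region of integration). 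For $i + j = 2$, I factor $\partial_x^i (y\partial_y)^j = D_1 D_2$ with each $D_k \in \{ \partial_x,\, y \partial_y \}$, apply the just-proved $i+j \leq 1$ bound to $D_2 \int G\, N$ at radius $\mu_1$, and then push the extra derivative $D_1$ from $\Omega_{\mu_1}$ down to $\Omega_\mu$ via Cauchy's inequality. Multiplying by $(\mu_0 - \mu - \gamma \sqrt{s})$ on the left exactly cancels the resulting $(\mu_1 - \mu)^{-1}$ loss. The second estimate follows by the identical template with Lemma~\ref{X3} replacing the corollaries: the $i+j \leq 1$ piece is obtained at radius $\mu_1$, and the $i+j = 2$ piece by the same Cauchy shift from $\mu_1$ to $\mu$. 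The third estimate is essentially a restatement of Lemma~\ref{X4}, which already handles the full range $i + j \leq 2$ acting on the initial datum, with no radius shift required.

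The main obstacle is carrying out the Cauchy shift rigorously in the weighted norm $X_{\mu, t}$. The $\partial_x$ case is a Fourier-side bound $|\xi|\, e^{-\epsilon_0 (\mu_1 - \mu)|\xi|} \lesssim (\mu_1 - \mu)^{-1}$, uniformly in $\xi$. The $y \partial_y$ case invokes Cauchy's integral formula on a complex disk of radius $\sim (\mu_1 - \mu)\, \mathrm{Re}\, y$ sitting inside $\Omega_{\mu_1}$, and requires the observation that the time-dependent weight $w_t$ varies only by a bounded multiplicative constant across such a disk (checked case-by-case on the pieces $y \leq \sqrt\nu$, $\sqrt\nu \leq y \leq 1$, $1 \leq y \leq 1+\mu_0$ of $w$, and likewise for the rescaled $w\bigl(y\, \nu^{2\beta}/(t^\beta \gamma^{2\beta})\bigr)$). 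Simultaneously, the analyticity factor $e^{\epsilon_0 (1 + \mu - y)_+ |\xi|}/ e^{\epsilon_0 (1 + \mu_1 - y)_+ |\xi|} = e^{-\epsilon_0 (\mu_1 - \mu)|\xi|}$ must be tracked so that the $|\xi|$-loss from the Fourier side is reabsorbed into the radius shrinkage, yielding one combined factor $(\mu_1 - \mu)^{-1}$. Keeping consistent bookkeeping of which derivative is applied at which radius, and which version of the already-established $i + j \leq 1$ bound is invoked, is the most delicate technical point.
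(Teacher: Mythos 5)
Your proposal reconstructs the Section-4 route: apply Corollaries~\ref{X1}, \ref{X2} and Lemma~\ref{X3} at the intermediate radii $\mu_1,\mu_2$ to obtain the $i+j\le 1$ estimates, then recover the $i+j=2$ estimates by a Cauchy-type analyticity shift, checking that the time-dependent weight $w_t$ is comparable on the relevant Cauchy disk. This is indeed the shape of the paper's argument; the analyticity recovery step is cited there from Lemma~A.3 of \cite{KukVicWan20} together with the observation that the proof carries over to $w_t$, and you have correctly identified that verifying this compatibility of $w_t$ with the shift is the technical content of that step. The absorption of the $\xi$-wise $L^2(y\ge1+\mu_2)$ remainder into $S_{\mu_2}$ and the verification of the admissibility gap with $c=3$ are also fine.

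The gap is the time regime. Lemma~\ref{X:anal} is asserted for all $t\in[0,T]$, but every ingredient you invoke — Corollaries~\ref{X1}--\ref{X2} and Lemmas~\ref{X3}--\ref{X4} — lives in Section~4, which operates under the standing hypothesis $t\le\nu^2/\gamma^2$; the weight there has the form $w\bigl(y(\nu^2/(t\gamma^2))^\beta\bigr)+(1-\gamma^2 t/\nu^2)_+$ and the kernel bounds were derived only in that regime. Your argument therefore only establishes the lemma on $t\le\nu^2/\gamma^2$. The paper closes the remaining case by a weight comparison: when $t\ge\nu^2/\gamma^2$ one has $w_t(y)=w(y)$, so if also $s\ge\nu^2/\gamma^2$ the ratio $w_t(y)/w_s(z)=w(y)/w(z)$ matches the \cite{KukVicWan20} setting exactly and their estimates apply verbatim, while if $s<\nu^2/\gamma^2$ the pointwise bound $w_s(z)\ge w(z)$ gives $w_t(y)/w_s(z)\le w(y)/w(z)$ and the \cite{KukVicWan20} kernel estimates still majorize. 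This short but essential case split is what your proposal omits; without it the claim is not proved on the full interval $[0,T]$.
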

\begin{proof}
Our first observation is that by checking the proof of Lemma A.3 in \cite{KukVicWan20}, the analyticity recovery for the $X$ norm still holds for our weight function $w_t(y)$, that is
\begin{equation*}
\begin{aligned}
&\sum_{i+j=2} \left\Vert \partial_x^i (y\partial_y)^j  \int_0^{\infty} G(t-s,y,z) N(s,z) dz\right\Vert_{X_{\mu,t}}\\
\lesssim& \frac{1}{\mu_0-\mu -\gamma \sqrt{s}}  \sum_{i+j=1} \left\Vert \partial_x^i (y\partial_y)^j  \int_0^{\infty} G(t-s,y,z) N(s,z) dz\right\Vert_{X_{\mu_1,t}}.
\end{aligned}
\end{equation*} 
In the previous section, we have shown that the lemma holds when $t < \nu^2/\gamma^2$. When $t\ge\nu^2/\gamma^2$, there is 
\begin{equation*}
w_t(y)=w(y).
\end{equation*}
If $s\ge \nu^2/\gamma^2$, the weight function is exactly the same as \cite{KukVicWan20} and so we can apply the result there directly. While for $s< \nu^2/\gamma^2$, notice that 
\begin{equation*}
w_s(z) \geq w(z),
\end{equation*}
for any $s>0$.  Therefore, we have
\begin{equation*}
\frac{w_t(y)}{w_s(z)} \leq \frac{w(y)}{w(z)},
\end{equation*}
and we again apply the result in \cite{KukVicWan20}.
\end{proof}

Since our definition of $Y$ norm is the same as \cite{KukVicWan20}, we borrow the following lemma directly. 
\begin{Lemma}
For any $\mu \in (0, \mu_0-\gamma \sqrt{s})$, we have 

\begin{equation}
\begin{aligned}
&(\mu_0-\mu-\gamma \sqrt{s})\sum_{i+j =2} \left\Vert \partial_x^i (y \partial_y)^j  \int_0^{\infty} G(t-s,y,z) N(s,z)\,dz\right\Vert_{Y_{\mu}} \\
&+ \sum_{i+j \leq 1} \left\Vert  \partial_x^i(y \partial_y)^j \int_0^{\infty} G(t-s,y,z) N(s,z)\, dz \right\Vert_{Y_{\mu_1}}\\
\lesssim& \sum_{i+j \leq 1} \Vert \partial_x^i (y \partial_y)^j N(s) \Vert_{Y_{\mu_1}} + \sum_{i+j \leq 1} \Vert \partial_x^i (y \partial_y)^j N(s)\Vert_{S_{\mu_1}},
\end{aligned}
\end{equation}

\begin{equation}
\begin{aligned}
&(\mu_0-\mu-\gamma \sqrt{s})\sum_{i+j =2} \Vert \partial_x^i (y \partial_y)^j    G(t-s,y,0) B(s)\Vert_{Y_{\mu}} \\
&+ \sum_{i+j \leq 1} \Vert  \partial_x^i(y \partial_y)^j  G(t-s,y,0) B(s)  \Vert_{Y_{\mu_1}}\\
\lesssim& \sum_{i+j \leq 1} \Vert \partial_x^i (y \partial_y)^j N(s) \Vert_{Y_{\mu_1}} + \sum_{i+j \leq 1} \Vert \partial_x^i (y \partial_y)^j N(s)\Vert_{S_{\mu}},
\end{aligned}
\end{equation}
and
\begin{equation}
\begin{aligned}
&\sum_{i+j \leq 2} \Vert \partial_x^i (y \partial_y)^j    G(t-s,y,z) \omega_0(z)\Vert_{Y_{\mu}} \\
\lesssim& \sum_{i+j \leq 2} \Vert \partial_x^i (y \partial_y)^j \omega_0 \Vert_{Y_{\mu}} + \sum_{i+j \leq 2} \sum_{\xi} \Vert \xi^i  \partial_y^j \omega_{0,\xi}\Vert_{L^1(y \geq 1+\mu)}.
\end{aligned}
\end{equation}

\end{Lemma}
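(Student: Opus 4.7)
The plan is to reduce this statement directly to the corresponding estimates in \cite{KukVicWan20}, since the $Y_\mu$, $S_\mu$ and $\mathcal{L}^1$ norms defined here are literally the same ones used there (they carry no time-dependent weight $w_t$). The only new ingredient that could interact with the proof in \cite{KukVicWan20} is the redefinition of $w_t$ in the norm $X(t)$, but since the three estimates stated here live entirely in the $Y$/$S$ side of the functional setting, no change is required. Concretely, I would verify this by inspecting the three kernel pieces
\[
G_\xi(t-s,y,z) = \tilde H_\xi(t-s,y,z) + \tilde H_\xi(t-s,y,-z) + R_\xi(t-s,y,z),
\]
and checking that the bounds in \eqref{kernel representation}--\eqref{kernel estimation 2} are exactly the ones used in the corresponding lemma of \cite{KukVicWan20}.

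The order of the argument would be as follows. First, for the bulk term $\int_0^\infty G(t-s,y,z)N(s,z)\,dz$, I would localize the $z$-integral into $\{z\leq 1+\mu_1\}$ and $\{z\geq 1+\mu_1\}$ using a cutoff $\phi(z/y)$ as in Lemma~\ref{l1}, and then estimate the resulting four pieces exactly as in the $X$-norm proof, except that now the $\mathcal{L}^\infty_{\mu,\nu,t}$ norm is replaced by the $L^1$-based norm taken over $\partial\Omega_\theta$ for $\theta<\mu$. For the heat kernel $\tilde H_\xi$ pieces the key input is the $L^\infty_y L^1_z$ bound on the one-dimensional heat kernel together with the exponential-weight absorption $e^{\varepsilon_0(z-y)_+|\xi|} \leq e^{\varepsilon_0(z-y)^2/(2\nu(t-s))} e^{\nu(t-s)|\xi|^2/2}$, already used in \eqref{eq2}. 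For $R_\xi$, the uniform bound $|R_\xi| \lesssim b e^{-\theta_0 b(y+z)/2} + (\nu t)^{-1/2}e^{-\theta_0(y+z)^2/(2\nu t)}e^{-\nu\xi^2 t/8}$ from \eqref{kernel estimation 2} gives integrable tails in $z$ and produces no singularity in $\nu$. The $S_{\mu_1}$ contribution enters only from the far-field piece $\{z\geq 1+\mu_1\}$, where Cauchy--Schwarz converts an $L^\infty_y L^2_z$ norm of the kernel into an $S_{\mu_1}$ norm of $N$, yielding the second summand.

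Second, for the boundary-driven term $G(t-s,y,0)B(s)$, I would follow Lemma~\ref{X3} line by line, replacing the $\mathcal{L}^\infty_{\mu,\nu,t}$ by $\mathcal{L}^1$. The trace kernels $T_1$ and $T_2$ satisfy the same bounds \eqref{kernel T2}, and the representation $(\partial_z\Delta^{-1}N_\xi)|_{z=0}=-\int_0^\infty e^{-|\xi|z}N_\xi\,dz$ allows the same split into $\int_0^{1+\mu_1}$ plus $\int_{1+\mu_1}^\infty$, producing the $Y_{\mu_1}$ and $S_{\mu_1}$ contributions on the right. No factor of $1/\sqrt{\nu}$ appears in the final bound because the absence of the weight $w_t$ in $Y_{\mu}$ removes precisely the $(\sqrt{\nu})^{-1}$ loss that produced the $\mathbbm 1_{\{0\leq t\leq\nu^2/\gamma^2\}}/\sqrt{\nu}$ term in the $X$-estimate of Lemma~\ref{X3}.

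Finally, the initial-data contribution is obtained as the special case $s=0$ of the previous step (with $N$ replaced by $\omega_0$ and no boundary contribution), by splitting again into $z\leq 1+\mu$ and $z\geq 1+\mu$ and using the $L^1$-based norm on the near field and the $L^1(y\geq 1+\mu)$ far-field bound. To gain the top-order factor $(\mu_0-\mu-\gamma\sqrt{s})$ on the second-derivative terms, I would invoke the same analyticity-recovery argument used in Lemma~\ref{X:anal}: on the shrunken domain $\Omega_{\mu_1}$, second tangential derivatives are controlled by first derivatives on $\Omega_{\mu_2}$ at the cost of one power of $(\mu_2-\mu_1)^{-1}\lesssim(\mu_0-\mu-\gamma\sqrt{s})^{-1}$, and similarly one uses the standard one-dimensional analyticity recovery in $y$ inside the complex strip. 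The main (and only real) obstacle is bookkeeping: verifying that nowhere in the argument of \cite{KukVicWan20} did the proof use a property of the weight $w$ that fails for our time-dependent $w_t$ \emph{within a $Y$-norm estimate}. Since the $Y$-norm does not see $w_t$ at all, this verification is immediate, and the lemma follows.
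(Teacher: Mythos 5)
Your proposal is correct and matches the paper's approach exactly: the paper disposes of this lemma in one sentence, observing that the $Y_\mu$, $S_\mu$, and $\mathcal{L}^1$ norms carry no dependence on the time-dependent weight $w_t$, so the corresponding estimates from \cite{KukVicWan20} transfer verbatim. Your more detailed sketch of how the kernel splitting, exponential-weight absorption, and analyticity recovery would be re-run on the $L^1$-based side is a sound sanity check, but it reproduces precisely the mechanism the paper implicitly invokes by citing \cite{KukVicWan20}.
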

 
At last, we estimate the nonlinear term $N(s)$.
\begin{Lemma}
For any $\mu \in (0, \mu_0 - \gamma s)$, we have

\begin{equation}\label{eq7}
\begin{aligned}
&\sum_{i+j \leq 1} \Vert \partial_x^i (y \partial_y)^j N(s) \Vert_{X_{\mu,s}}\\
 \lesssim& \sum_{i \leq 1} (\Vert \partial_x^i \omega(s) \Vert_{Y_\mu} + \Vert \partial_x^i \omega(s) \Vert_{S_\mu}) \sum_{i+j \leq 2} \Vert \partial_x^i (y \partial_y)^j \omega(s)\Vert_{X_{\mu,s}}\\
&+\sum_{i \leq 2}  (\Vert \partial_x^i \omega(s) \Vert_{Y_\mu} + \Vert \partial_x^i \omega(s) \Vert_{S_\mu}) \sum_{i+j \leq 1} \Vert \partial_x^i (y \partial_y)^j \omega(s)\Vert_{X_{\mu,s}}\\
&+ \Vert \omega(s) \Vert_{X_{\mu,s}} \sum_{i+j=1} \Vert \partial_x^i (y\partial_y)^j \omega(s)  \Vert_{X_{\mu,s}},
\end{aligned}
\end{equation}
\begin{equation}\label{eq5}
\begin{aligned}
&\sum_{i+j \leq 1} \Vert \partial_x^i (y \partial_y)^j N(s) \Vert_{Y_{\mu}}\\
 \lesssim& \sum_{i \leq 1} (\Vert \partial_x^i \omega(s) \Vert_{Y_\mu} + \Vert \partial_x^i \omega(s) \Vert_{S_\mu}) \sum_{i+j \leq 2} \Vert \partial_x^i (y \partial_y)^j \omega(s)\Vert_{Y_{\mu}}\\
&+\sum_{i \leq 2}  (\Vert \partial_x^i \omega(s) \Vert_{Y_\mu} + \Vert \partial_x^i \omega(s) \Vert_{S_\mu}) \sum_{i+j \leq 1} \Vert \partial_x^i (y \partial_y)^j \omega(s)\Vert_{Y_{\mu}}\\
&+ \Vert \omega(s) \Vert_{X_{\mu,s}} \sum_{i+j=1} \Vert \partial_x^i (y\partial_y)^j \omega(s)  \Vert_{Y_{\mu}},
\end{aligned}
\end{equation}
and
\begin{equation}\label{eq6}
\sum_{i+j \leq 1} \Vert \partial_x^i \partial_y^j N(s) \Vert_{S_{\mu}} \lesssim \normmm{\omega(s)}_{s} \sum_{i+j \leq 3} \Vert  \partial_x^i \partial_y^j \omega(s)\Vert_S.
\end{equation}

\end{Lemma}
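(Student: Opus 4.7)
The plan is to write $N(s) = -u_1\,\partial_x\omega - u_2\,\partial_y\omega$ and distribute $\partial_x^i(y\partial_y)^j$ via Leibniz, producing a finite sum of products of the form $(\partial_x^{i_1}(y\partial_y)^{j_1} u_k)\cdot(\partial_x^{i_2}(y\partial_y)^{j_2}\partial_\ell\omega)$ with $i_1+i_2\le i$ and $j_1+j_2\le j$. The velocity $u$ is reconstructed from $\omega$ via the 2D half-space Biot--Savart law with no-slip boundary condition, so I would first record (or cite from \cite{KukVicWan20}) the velocity-recovery estimates
\begin{equation*}
\sum_{i+j\le 2}\bigl(\|\partial_x^i(y\partial_y)^j u_1\|_{X_{\mu,s}} + \|\partial_x^i(y\partial_y)^j (u_2/y)\|_{X_{\mu,s}}\bigr) \lesssim \sum_{i+j\le 2}\|\partial_x^i(y\partial_y)^j \omega\|_{X_{\mu,s}} + \|\omega\|_{Y_\mu} + \|\omega\|_{S_\mu},
\end{equation*}
together with their $Y_\mu$ and $S_\mu$ analogues. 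The factor $1/y$ in $u_2/y$ absorbs the $y$ of the commuting field $y\partial_y$ via the identity $u_2\partial_y\omega=(u_2/y)(y\partial_y\omega)$, which is well-defined since $u_2|_{y=0}=0$.

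For \eqref{eq7}, the $X_{\mu,s}$ norm is an algebra: the weight $w_s$ is bounded by $1$ above by Lemma \ref{l0} (and its rescaled version for the dilated argument in $w_s$), while the Fourier weight $e^{\epsilon_0(1+\mu-y)_+|\xi|}$ is sub-multiplicative under the frequency convolution in $\xi$, so Young's inequality gives $\|fg\|_{X_{\mu,s}}\lesssim \|f\|_{X_{\mu,s}}\|g\|_{X_{\mu,s}}$. In each Leibniz term I place the factor carrying the larger number of derivatives (up to two $\partial_x$ or $y\partial_y$) into the $X_{\mu,s}$ norm, and the remaining factor (carrying at most one derivative) into $X_{\mu,s}\cap Y_\mu\cap S_\mu$; the $Y_\mu$ and $S_\mu$ contributions on the right appear because the Biot--Savart recovery of $u$ carries a far-field tail that is not controlled by $X_{\mu,s}$ alone, but is recovered by $\|u\|_{L^\infty_y(y\ge 1+\mu)} \lesssim \|\omega\|_{L^1_y}+\|y\omega\|_{L^2_y}$. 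The three summands of \eqref{eq7} match exactly the three possibilities for how the two derivatives are split between the $u$-factor and the $\nabla\omega$-factor in the Leibniz expansion. The estimate \eqref{eq5} is proved identically using the bilinear embedding $\|fg\|_{Y_\mu}\lesssim \|f\|_{X_{\mu,s}}\|g\|_{Y_\mu}$, with the vorticity factor placed in $Y_\mu$ and the velocity factor in $X_{\mu,s}$ (or vice versa for the last term).

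The Sobolev estimate \eqref{eq6} on $S$ is the most routine, since the integration region $y\ge 1/2$ is uniformly away from the boundary layer: 2D Sobolev embedding $H^2\hookrightarrow L^\infty$ combined with the $H^1$ product rule gives $\|fg\|_{H^1}\lesssim \|f\|_{H^2}\|g\|_{H^1}$, and elliptic regularity for Biot--Savart yields $\|u\|_{H^4(y\ge 1/4)}\lesssim \normmm{\omega(s)}_s$, which together with the product rule produces the claimed bound. The main obstacle will be verifying that the algebra property of $X_{\mu,s}$ and the Biot--Savart velocity bounds carry through the new time-dependent weight $w_s$: for $s\in(0,\nu^2/\gamma^2]$, $w_s$ contains both the dilated factor $w(y(\nu^2/s\gamma^2)^\beta)$ and the additive constant $(1-\gamma^2 s/\nu^2)_+$, and one must check that $w_s(y_1+y_2)\lesssim w_s(y_1)\,w_s(y_2)/\sqrt{\nu}$ holds uniformly. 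This reduces to Lemma \ref{l0} applied to the rescaled argument, exploiting the pointwise lower bound $w_s\ge w$ to transfer the constants from \cite{KukVicWan20} without loss.
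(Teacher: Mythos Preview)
Your overall strategy---Leibniz expansion, the rewriting $u_2\partial_y\omega=(u_2/y)(y\partial_y\omega)$, Biot--Savart velocity recovery, and a bilinear product estimate in the Fourier variable---is the same as the paper's. But there is a genuine gap in the algebra step.

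The claim that $\|fg\|_{X_{\mu,s}}\lesssim \|f\|_{X_{\mu,s}}\|g\|_{X_{\mu,s}}$ is false. The product is pointwise in $y$ (only the $\xi$ variable is convolved), so the inequality you need at fixed $y$ is $w_s(y)\lesssim w_s(y)\cdot w_s(y)$, i.e.\ $w_s(y)\gtrsim 1$. But $w_s$ is bounded \emph{above} by $O(1)$ and can be as small as $\sqrt{\nu}$; the boundedness from above gives you nothing here. For the same reason, the sub-multiplicativity condition $w_s(y_1+y_2)\lesssim w_s(y_1)w_s(y_2)/\sqrt{\nu}$ that you flag as the ``main obstacle'' is irrelevant: there is no convolution in $y$, so this inequality never enters.

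What the paper actually does is the asymmetric estimate
\[
\|fg\|_{X_{\mu,s}}\ \lesssim\ \Bigl(\sum_\xi \sup_{y\in\Omega_\mu} e^{\epsilon_0(1+\mu-y)_+|\xi|}|f_\xi(y)|\Bigr)\cdot \|g\|_{X_{\mu,s}},
\]
placing the weight $w_s$ entirely on the vorticity-gradient factor $g$ and leaving the velocity factor $f$ in an \emph{unweighted} $\ell^1_\xi L^\infty_y$ norm. That unweighted norm of $\partial_x^i(y\partial_y)^j u_1$ and of $(y\partial_y)^j(\partial_x^i u_2/y)$ is then bounded, via the Biot--Savart formula (Lemma 6.3 of \cite{KukVicWan20}, using $\|\cdot\|_{X_\mu}\le\|\cdot\|_{X_{\mu,s}}$), by $\|\partial_x^{i+j}\omega\|_{Y_\mu}+\|\partial_x^{i+j}\omega\|_{S_\mu}$ plus, in the case $j=1$ for $u_1$, an extra $\|\omega\|_{X_{\mu,s}}$. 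This last contribution is exactly the source of the third term $\|\omega\|_{X_{\mu,s}}\sum_{i+j=1}\|\partial_x^i(y\partial_y)^j\omega\|_{X_{\mu,s}}$ on the right of \eqref{eq7}, which your symmetric algebra argument would not reproduce with the correct structure. Once you fix the bilinear step to be asymmetric in this way, the rest of your plan (and your treatment of \eqref{eq5}, \eqref{eq6}) is fine and matches the paper.
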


\begin{proof}
The proof of estimate \eqref{eq5} and \eqref{eq6} follows from Lemma 6.4 and Lemma 6.5 in \cite{KukVicWan20}. We only give a proof of  \eqref{eq7} here. First consider the case  $i+j=1$. Recalling
\begin{equation*}
N(s)= u_1 \partial_x \omega + u_2  \partial_y \omega,
\end{equation*} 
and applying Leibniz rule, we have 
\begin{equation*}
\begin{aligned}
&\left\Vert \partial_x^i (y \partial_y)^j N(s)\right\Vert_{X_{\mu,s}}\\
=& \sum_{\xi}\left\Vert e^{\e_0(1+ \mu -y)_+|\xi|} (-\partial_x^i (y\partial_y)^j N(s))_{\xi}\right\Vert_{\mathcal{L}^{\infty}_{\mu, \nu,s}}\\
\lesssim&  \sum_{\xi}\left\Vert e^{\e_0(1+ \mu -y)_+|\xi|} (-\partial_x^i (y\partial_y)^j u_1  \partial_x \omega)_{\xi}\right\Vert_{\mathcal{L}^{\infty}_{\mu, \nu,s}} \\
&\ \ \ \ \ \ \ \ \ \ \ \ \ \ \ \ \ \ \ \ \ \ \ \ \ \ \  \ \ \ \ \ \ \ +  \sum_{\xi}\left\Vert e^{\e_0(1+ \mu -y)_+|\xi|} (u_1(y \partial_y)^j \partial_x^{i+1} \omega)_{\xi}\right\Vert_{\mathcal{L}^{\infty}_{\mu, \nu,s}}\\
&+  \sum_{\xi}\left\Vert e^{\e_0(1+ \mu -y)_+|\xi|} \left((y\partial_y)^j \frac{\partial_x^i u_2}{y}  y \partial_y \omega\right)_{\xi}\right\Vert_{\mathcal{L}^{\infty}_{\mu, \nu,s}}\\
&\ \ \ \ \ \ \ \ \ \ \ \ \ \ \ \ \ \ \ \ \ \ \ \ \ \ \ \ \ \ \ \ \ \ +  \sum_{\xi}\left\Vert e^{\e_0(1+ \mu -y)_+|\xi|} \left(\frac{u_2}{y}\partial_x^i (y\partial_y)^j  \omega\right)_{\xi}\right\Vert_{\mathcal{L}^{\infty}_{\mu, \nu,s}}\\
\lesssim& \sum_{\xi}\left\Vert e^{\e_0(1+ \mu -y)_+|\xi|} \sum_{\eta} (-\partial_x^i (y\partial_y)^j u_1)_{\xi-\eta}  (\partial_x \omega)_{\eta}\right\Vert_{\mathcal{L}^{\infty}_{\mu, \nu,s}}\\
&\ \ \ \ \ \ \ \ \ \ \ \ \ \ \ \ \ \ \ \ \ \ \ \ \ \ +\sum_{\xi}\left\Vert e^{\e_0(1+ \mu -y)_+|\xi|} \sum_{\eta}((y \partial_y)^j \partial_x^{i+1} \omega)_{\eta}  (u_1)_{\xi-\eta}\right\Vert_{\mathcal{L}^{\infty}_{\mu, \nu,s}} \\
&+ \sum_{\xi}\left\Vert e^{\e_0(1+ \mu -y)_+|\xi|} \sum_{\eta}\left((y\partial_y)^j \frac{\partial_x^i u_2}{y}\right)_{\xi-\eta}  (y \partial_y \omega)_{\eta}\right\Vert_{\mathcal{L}^{\infty}_{\mu, \nu,s}}\\
&\ \ \ \ \ \ \ \ \ \ \ \ \ \ \ \ \ \ \ \ \ \ \ \ \ \ +\sum_{\xi}\left\Vert e^{\e_0(1+ \mu -y)_+|\xi|} \sum_{\eta}\left(\frac{u_2}{y}\right)_{\xi-\eta}  (\partial_x^i (y\partial_y)^j  \omega)_{\eta}\right\Vert_{\mathcal{L}^{\infty}_{\mu, \nu,s}}.
\end{aligned}
\end{equation*}
Noticing that 
\begin{equation*}
\begin{aligned}
e^{\e_0(1+\mu-y)_+|\xi|} \leq& e^{\e_0(1+\mu-y)_+(|\eta|+ |\xi-\eta|)}\\
=&e^{\e_0(1+\mu-y)_+|\eta|} \cdot e^{\e_0(1+\mu-y)_+|\xi-\eta|},
\end{aligned}
\end{equation*}
we have
\begin{equation*}
\begin{aligned}
&\sum_{\xi}\left\Vert e^{\e_0(1+ \mu -y)_+|\xi|} (-\partial_x^i (y\partial_y)^j u_1)_{\xi-\eta}  (\partial_x \omega)_{\eta}\right\Vert_{\mathcal{L}^{\infty}_{\mu, \nu,s}}\\
\lesssim&\sum_{\xi}\left\Vert \sum_{\eta} e^{\e_0(1+ \mu -y)_+|\xi-\eta|} (-\partial_x^i (y\partial_y)^j u_1)_{\xi-\eta}  e^{\e_0(1+ \mu -y)_+|\eta|}(\partial_x \omega)_{\eta}\right\Vert_{\mathcal{L}^{\infty}_{\mu, \nu,s}}\\
\lesssim&\sum_{\xi}\sup_{y \in \Omega_{\mu}}\left| \sum_{\eta} e^{\e_0(1+ \mu -y)_+|\xi-\eta|} (-\partial_x^i (y\partial_y)^j u_1)_{\xi-\eta}  e^{\e_0(1+ \mu -y)_+|\eta|}(\partial_x \omega)_{\eta}\right|\\
\lesssim&\sum_{\xi} \sum_{\eta}\sup_{y \in \Omega_{\mu}}\left| e^{\e_0(1+ \mu -y)_+|\xi-\eta|} (-\partial_x^i (y\partial_y)^j u_1)_{\xi-\eta}|  \sup_{y \in \Omega_{\mu}} |e^{\e_0(1+ \mu -y)_+|\eta|}(\partial_x \omega)_{\eta}\right|\\
\lesssim&\sum_{\eta} \sum_{\xi}\sup_{y \in \Omega_{\mu}}\left| e^{\e_0(1+ \mu -y)_+|\xi-\eta|} (-\partial_x^i (y\partial_y)^j u_1)_{\xi-\eta}|  \sup_{y \in \Omega_{\mu}} |e^{\e_0(1+ \mu -y)_+|\eta|}(\partial_x \omega)_{\eta}\right|\\
\lesssim& \Vert \partial_x\omega(s)\Vert_{X_{\mu,s}} \sum_{\eta}\sup_{y \in \Omega_{\mu}}\left| e^{\e_0(1+ \mu -y)_+|\eta|} (-\partial_x^i (y\partial_y)^j u_1)_{\eta}\right|.
\end{aligned}
\end{equation*}
Applying similar arguments to the other terms, we arrive at the following estimate,
\begin{equation*}
\begin{aligned}
&\Vert \partial_x^i (y \partial_y)^j N(s)\Vert_{X_{\mu,s}}\\
 \lesssim& \Vert \partial_x\omega(s)\Vert_{X_{\mu,s}} \sum_{\xi}\sup_{y \in \Omega_{\mu}}| e^{\e_0(1+ \mu -y)_+|\eta|} (-\partial_x^i (y\partial_y)^j u_1)_{\xi}|\\
&+\Vert y \partial_y \omega(s) \Vert_{X_{\mu,s}} \sum_{\xi} \sup_{y \in \Omega_\mu}\left| e^{\e_0(1+\mu-y)_+|\xi|} (y \partial_y)^j \left(\frac{\partial_x^i u_2}{y}\right)_{\xi}\right|\\
&+\Vert \partial_x^{i+1} (y \partial_y)^j w(s) \Vert_{X_{\mu,s}} \sum_{\xi} \sup_{y \in \Omega_{\mu}}|e^{\e_0(1+ \mu -y)_+|\xi|} (u_1)_{\xi}|\\
&+\Vert \partial_x^{i} (y \partial_y)^{j+1} w(s) \Vert_{X_{\mu,s}} \sum_{\xi} \sup_{y \in \Omega_{\mu}}\left|e^{\e_0(1+ \mu -y)_+|\xi|} \left(\frac{u_2}{y}\right)_{\xi}\right|.\\
\end{aligned}
\end{equation*}
Since $\Vert \cdot \Vert_{X_{\mu}} \leq \Vert \cdot \Vert_{X_{\mu,s}}$, by the proof of Lemma 6.3 in \cite{KukVicWan20}, we have, for $i + j \leq 1$,
\begin{equation*}
\sum_{\xi}\sup_{y \in \Omega_{\mu}}| e^{\e_0(1+ \mu -y)_+|\eta|} (-\partial_x^i (y\partial_y)^j u_1)_{\xi}| \lesssim \Vert  \partial_x^{i+j}\Vert_{Y_\mu} + \Vert \partial_x^{i+j} \omega(s)\Vert_{S_{\mu}} + j \Vert \omega(s) \Vert_{X_{\mu,s}},
\end{equation*}
and
\begin{equation*}
\sum_{\xi} \sup_{y \in \Omega} e^{\e_0(1+\mu-y)_+|\xi|} \left|(y \partial_y)^j \left(\frac{\partial_x^i u_2}{y}\right)_\xi\right| \lesssim \Vert \partial_x^{i+1} \omega\Vert_{Y_{\mu}} + \Vert \partial_x^{i+1} \omega \Vert_{S_{\mu}}.
\end{equation*}
The case when $i=j=0$ follows exactly the same as the case when $i+j=1$.
\end{proof}

\section{Proof of the main Theorems}\label{Proof}
Since $w_t(y)\gtrsim1$ for $y \in[1/4,1/2]$, $\gamma \in (0,1)$, and all $t>0$, we apply the proof in section 7 of \cite{KukVicWan20} to get the following lemma.
\begin{Lemma}\label{Sobolev}
For any $t \in (0, \frac{\mu_0}{2\gamma})$, $\phi(y)=y \bar{\phi}(y)$, where $\bar{\phi} \in C^{\infty}$, nondecreasing function such that $\bar{\phi}=0$ when $y \in [0, 1/4]$, $\bar{\phi}=1$ when $y \in [1/2,\infty)$, there is 
\begin{equation}
\sum_{i+j \leq 3} \Vert y \partial_x^i \partial_y^j \omega(t) \Vert_{L^2(y \geq 1/2)} \lesssim \left(t + t \sup \normmm{\omega(s)}_{s}^2 + \sum_{i+j \leq 3} \Vert \phi \partial_x^i \partial_y^j \omega_0 \Vert^2_{L^2(H)}\right) e^{Ct(1 + \sup_{s \in [0,t]} \normmm{\omega(s)}_s)},
\end{equation}
where $C>0$ is independent of $\gamma$.
\end{Lemma}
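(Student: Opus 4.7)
The plan is to adapt the Sobolev energy estimate from Section~7 of \cite{KukVicWan20} by exploiting the fact that the new time-dependent weight $w_t(y)$ is equivalent to $1$ (uniformly in $\nu$, $t$, and $\gamma$) on the transition region $y \in [1/4, 1/2]$ of the cutoff $\bar\phi$. Concretely, for $t \leq \nu^2/\gamma^2$ we have $\nu^{2\beta}/(t\gamma^2)^\beta \geq 1$, so $y\, \nu^{2\beta}/(t\gamma^2)^\beta \geq 1/4$ in this range, hence $w_t(y) \geq w(1/4) \gtrsim 1$; for $t > \nu^2/\gamma^2$ we have $w_t(y) = w(y) \in [1/4, 1]$ on the same range. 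This means $\|\cdot\|_{X_{\mu,s}}$ pointwise dominates the analytic quantities that will appear in the cutoff commutator contributions on $[1/4, 1/2]$.

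First I would write the vorticity equation $\partial_t \omega - \nu \Delta \omega + u \cdot \nabla \omega = 0$ on $\mathbb H$, differentiate it by $\partial_x^i \partial_y^j$ for $i+j \leq 3$, multiply the resulting equation by $\phi^2 \partial_x^i \partial_y^j \omega$, and integrate over $\mathbb H$. The boundary terms at $y=0$ vanish since $\phi$ is supported in $\{y \geq 1/4\}$. Integration by parts on the viscous term produces the dissipation $\nu \|\phi \nabla \partial_x^i \partial_y^j \omega\|_{L^2}^2$ together with a commutator of the form
\[
\nu \int \phi\, \phi'\, \partial_y(\partial_x^i \partial_y^j \omega)\, \partial_x^i \partial_y^j \omega\, dxdy,
\]
supported on $[1/4,1/2]$, which is absorbed using Young's inequality and the uniform boundedness of $\phi, \phi'$.

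For the transport term $u \cdot \nabla \omega$, after applying $\partial_x^i \partial_y^j$ and Leibniz, each resulting commutator involves products of derivatives of $u$ and $\omega$. Using the Biot--Savart relation $u = \nabla^\perp \Delta^{-1} \omega$ and the analytic/Sobolev norms together with Remark~3.1 style estimates for the stream function, I expect each commutator integrated against $\phi^2 \partial_x^i \partial_y^j \omega$ to be controlled by
\[
C\left(1+\normmm{\omega(s)}_s\right) \sum_{k+l \leq 3} \|\phi \partial_x^k \partial_y^l \omega\|_{L^2}^2
\]
plus the ``far from boundary'' piece $\|\omega\|_{Z}^2$, where the key point is that evaluating the analytic norm factors only requires $L^\infty$ bounds on $y \in [1/4, 1/2]$, where $w_t \gtrsim 1$.

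Combining, I obtain a differential inequality
\[
\tfrac{d}{dt} \|\phi \partial_x^i \partial_y^j \omega\|_{L^2}^2 \lesssim 1 + \normmm{\omega}_t^2 + \left(1 + \normmm{\omega}_t\right) \sum_{k+l \leq 3} \|\phi \partial_x^k \partial_y^l \omega\|_{L^2}^2,
\]
after summing in $i+j \leq 3$. Applying Gr\"onwall from $0$ to $t$ with initial value $\sum_{i+j\leq 3} \|\phi \partial_x^i \partial_y^j \omega_0\|_{L^2}^2$ yields the stated bound; since $w_t \gtrsim 1$ on $[1/4,1/2]$ with a constant independent of $\gamma$, and the commutator constants depend only on $\phi$ and universal constants, the resulting $C$ is also independent of $\gamma$. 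The main obstacle will be carefully tracking the nonlinear commutator terms to confirm that each can be bounded by $\normmm{\omega(s)}_s$ times a weighted Sobolev quantity --- in particular, converting products like $\partial_x^k u \cdot \partial_y^\ell \omega$ into $\|u\|_{X}$-type factors only requires their values on $[1/4,1/2]$ --- and verifying that no loss of a $\gamma$-dependent constant is incurred when passing from $w_t$ to the constant weight on this annulus.
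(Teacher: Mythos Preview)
Your proposal is correct and follows essentially the same approach as the paper: the paper's proof consists solely of the observation that $w_t(y)\gtrsim 1$ uniformly for $y\in[1/4,1/2]$ and all $t>0$, and then defers entirely to the proof in Section~7 of \cite{KukVicWan20}. Your sketch simply unpacks what that energy argument looks like---cutoff, $L^2$ pairing, commutator terms supported on $[1/4,1/2]$, Gr\"onwall---and correctly identifies that the only new ingredient is the uniform lower bound on $w_t$ in the transition region, which makes the constants independent of $\gamma$.
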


\subsection{Proof of Theorem \ref{thm0}}
We next prove Theorem \ref{thm0}.
\begin{proof}
Define 
\begin{equation*}
\tilde{M}= \sum_{i+j \leq 2} \Vert \partial_x^i(y\partial_y)^j \omega_0 \Vert_{X_{\mu_0,0}} + \sum_{i+j\leq 2} \sum_{\xi} \Vert \partial_x^i \partial_y^j \omega_{0,\xi}\Vert_{L^{\infty}_{(y \geq 1+\mu_0)}},
\end{equation*}
and
\begin{equation*}
\bar{M}= \sum_{i+j \leq 2} \Vert \partial_x^i(y\partial_y)^j \omega_0 \Vert_{Y_{\mu_0}} + \sum_{i+j\leq 2} \sum_{\xi} \Vert \partial_x^i \partial_y^j \omega_{0,\xi}\Vert_{L^{1}_{(y \geq 1+\mu_0)}}.
\end{equation*}
Then by our condition on the initial value, the estimate
\begin{equation*}
\sum_{\xi} \Vert \omega_{\xi} \Vert_{L^{\infty}_{(y\geq 1+\mu)}} \leq \sum_{\xi} \Vert y \partial_y \omega_{\xi} \Vert_{L^2(y \geq 1+\mu)} = \Vert \partial_y \omega \Vert_{S_\mu},
\end{equation*}
and Lemma A.1 in \cite{KukVicWan20}, we have
\begin{equation*}
\tilde{M} + \bar{M} \lesssim M.
\end{equation*}
Let $t \leq \mu_0/2\gamma$, $s \in (0,t)$, and $\mu < \mu_0-\gamma t$. Note $\Vert \cdot \Vert_{X_{\mu,t}} \leq \Vert \cdot \Vert_{X_{\mu_0,t}}$. We first derive estimates for the $X$ analytic norm.
For the case $i+j =2$, there is 
\begin{equation*}
\begin{aligned}
&\sum_{i+j=2} \Vert \partial_x^i (y\partial_y)^j \omega(t)\Vert_{X_{\mu,t}}\\
=&\sum_{i + j =2} \left\Vert \partial_x^i(y\partial_y)^j  \int_0^{\infty}G(t,y,z) \omega_0(z) dz \right\Vert_{X_{\mu,t}} + \sum_{i+j=2} \left\Vert \partial_x^i (y\partial_y)^j \int_0^t G(t-s,y,0) B(s) ds\right\Vert_{X_{\mu,t}}\\
&+\sum_{i + j =2} \left\Vert \partial_x^i(y\partial_y)^j  \int_0^t\int_0^{\infty}G(t,y,z) N(s,z) dz ds \right\Vert_{X_{\mu,t}}\\
\lesssim&\sum_{i + j =2} \left\Vert \partial_x^i(y\partial_y)^j  \int_0^{\infty}G(t,y,z) \omega_0(z) dz  \right\Vert_{X_{\mu_0,t}} +\sum_{i+j=2} \left\Vert \partial_x^i (y\partial_y)^j \int_0^t G(t-s,y,0) B(s) ds\right\Vert_{X_{\mu,t}}\\
&+\sum_{i + j =2}  \int_0^t\left\Vert \partial_x^i(y\partial_y)^j \int_0^{\infty}G(t,y,z) N(s,z) dz \right\Vert_{X_{\mu,t}} ds. \\
\end{aligned}
\end{equation*}
Noticing that $\Vert \cdot \Vert_{X_{\mu_0,t}} \lesssim \Vert \cdot \Vert_{\mu_0,0}$ and applying Lemma~\ref{X:anal}, we have
\begin{equation*}
\begin{aligned}
&\sum_{i+j=2} \Vert \partial_x^i (y\partial_y)^j \omega(t)\Vert_{X_{\mu,t}}\\
\lesssim& \sum_{i+j \leq 2} \Vert \partial_x^i (y\partial_y)^j \omega_0 \Vert_{X_{\mu_0,0}} +  \sum_{i+j \leq 2} \sum_{\xi} \Vert \xi^i \partial_y^j \omega_{0,\xi} \Vert_{L^{\infty}(y \geq 1+\mu)}\\
&+\int_0^{t} \frac{1}{\mu_0-\mu -\gamma \sqrt{s}}\Big(\frac{1}{\sqrt{t-s}} \left(\frac{1}{\sqrt{\nu}} \mathbbm{1}_{\{0 \leq t \leq {\nu^2}/\gamma^2\}} + \frac{1}{s^\beta}\right) (\sum_{i\leq1}\Vert \partial_x^i N(s) \Vert_{Y_{\mu_1}} + \Vert \partial_x^i N(s) \Vert_{S_{\mu_1}})\\
&\ \ \ \ \ \ \ \ \ \ \ \ \ \ \ \ \ \ \ \ \ \ \ \ \ \ \ \ \ + \left(\frac{1}{\sqrt{\nu}} \mathbbm{1}_{\{0 \leq t \leq {\nu^2}/\gamma^2 \}} +1\right) \Vert \partial_x^i N(s)  \Vert_{X_{\mu_1,s}}\Big) ds\\
& + \int_0^t \frac{1}{\mu_0-\mu -\gamma \sqrt{s}}\sum_{i+j \leq 1} \Vert \partial_x^i (y\partial_y)^j N(s) \Vert_{X_{\mu_2,t}} \\
&\ \ \ \ \ \ \ \ \ \ \ \ \ \ \ \ \ \ \ \ \ \ \ \ \ \ \ \ \ \ + \frac{1}{(\mu_0- \mu-\gamma \sqrt{s})^{3/2}} \sum_{i+j \leq 1} \Vert \partial_x^i \partial_y^j N(s) \Vert_{S_{\mu_2}}ds.\\
\end{aligned}
\end{equation*}
Recall the definition of the norm $\normmm{\cdot}_s$, there is
\begin{equation*}
\begin{aligned}
&\sum_{i+j=2} \Vert \partial_x^i (y\partial_y)^j \omega(t)\Vert_{X_{\mu,t}}\\
\lesssim& \tilde{M} + \Big(\int_0^t \frac{1}{(\mu_0- \mu -\gamma \sqrt{s})^{1+\a}} \frac{1}{\sqrt{t-s}} \left(\frac{1}{\sqrt{\nu}} \mathbbm{1}_{\{0 \leq t \leq {\nu^2}/\gamma^2\}} + \frac{1}{s^\beta}\right) ds \\
&\ \ \ \ \ \ \ \ \ \ \ \ \ \ \ \ \ \ \ \ \ \ \ \ \ \ \ \ \ \  \ 
+\int_0^t \frac{1}{(\mu_0 - \mu - \gamma \sqrt{s})^{3/2 + \a}} \left(\frac{1}{\sqrt{\nu}} \mathbbm{1}_{\{0 \leq t \leq {\nu^2}/\gamma^2\}} + 1\right) ds\Big) \cdot 
 \sup_{s \in [0,t]}\normmm{\omega(s)}^2_s\\
:=& \tilde{M} + (I_1 + I_2 + I_3 +I_4) \cdot \sup_{s \in [0,t]}\normmm{\omega(s)}^2_s.\\
\end{aligned}
\end{equation*}
We have the following estimates for $I_1$, $I_2$, $I_3$ and $I_4$ whose proof will be given in the Appendix, 
\begin{equation*}
I_1 \lesssim \frac{1}{\sqrt{\gamma}} \frac{1}{(\mu_0-\mu-\gamma \sqrt{t})^{1/2 + \a}},
\end{equation*}
\begin{equation*}
I_2 \lesssim \frac{1}{\sqrt{\gamma}} \frac{1}{\mu_0-\mu - \gamma \sqrt{t}} \cdot t^{1/4-\beta},
\end{equation*}
\begin{equation*}
I_3 \lesssim \frac{1}{\sqrt{\gamma}} \frac{1}{(\mu_0-\mu-\gamma \sqrt{t})^{1/2 + \a}},
\end{equation*}
and
\begin{equation*}
I_4 \lesssim \frac{1}{\sqrt{\gamma}} \frac{1}{(\mu_0 - \mu -\gamma \sqrt{t})^{1/2+\a}},
\end{equation*}
for $\beta <1/4$ with $\alpha>0$. Therefore, we have
\begin{equation}\label{eqX1}
\sum_{i+j=2} \Vert \partial_x^i (y\partial_y)^j \omega(t)\Vert_{X_{\mu,t}} \lesssim \tilde{M} +  \frac{1}{\sqrt{\gamma}} \frac{1}{(\mu_0 - \mu - \gamma \sqrt{t})^{1/2 + \a}} \cdot  \sup_{s \in [0,t]}\normmm{\omega(s)}^2_s.
\end{equation}
 For the case $i+j \leq 1$, we similarly have
\begin{equation*}
\begin{aligned}
&\sum_{i+j\leq 1} \Vert \partial_x^i (y\partial_y)^j \omega(t)\Vert_{X_{\mu,t}}\\
\lesssim& \tilde{M} + (\int_0^t \frac{1}{(\mu_0- \mu -\gamma \sqrt{s})^{\a}} \frac{1}{\sqrt{t-s}} (\frac{1}{\sqrt{\nu}} \mathbbm{1}_{\{0 \leq t \leq {\nu^2}/\gamma^2\}} + \frac{1}{s^\beta}) ds \\
&\ \ \ \ \ \ 
+\int_0^t \frac{1}{(\mu_0 - \mu - \gamma \sqrt{s})^{1/2 + \a}} (\frac{1}{\sqrt{\nu}} \mathbbm{1}_{\{0 \leq t \leq {\nu^2}/\gamma^2\}} + 1) ds) \cdot 
 \sup_{s \in [0,t]}\normmm{\omega(s)}^2_s\\
=:& \tilde{M} + (J_1 + J_2 + J_3 + J_4)  \cdot 
 \sup_{s \in [0,t]}\normmm{\omega(s)}^2_s.
\end{aligned}
\end{equation*}
Recalling $\beta < 1/4$, there is
\begin{equation*}
J_1 \lesssim \frac{1}{\gamma^{1-2\a}},\ 
J_2 \lesssim \frac{1}{\gamma^\a},\ 
J_3 \lesssim \frac{1}{\gamma},\ 
J_4 \lesssim \frac{1}{\gamma},
\end{equation*}
whose proof may be found in the Appendix. Therefore, we obtain
\begin{equation}\label{eqX2}
\sum_{i+j\leq 1} \Vert \partial_x^i (y\partial_y)^j \omega(t)\Vert_{X_{\mu,t}} \lesssim \tilde{M} +  \frac{1}{\sqrt{\gamma}}  \cdot  \sup_{s \in [0,t]}\normmm{\omega(s)}^2_s.
\end{equation}
Combining \eqref{eqX1} and \eqref{eqX2}, we have
\begin{equation}\label{eqX}
\Vert \omega(t)\Vert_{X(t)} \lesssim \tilde{M} +  \frac{1}{{\gamma^{\min\{ \a, 1-2\a \}}}}  \sup_{s \in [0,t]}\normmm{\omega(s)}^2_s.
\end{equation}
From \cite{KukVicWan20}, there is
\begin{equation}\label{eqY}
\Vert  \omega(t)\Vert_{Y(t)} \lesssim \bar{M}+ \frac{1}{\sqrt{\gamma}} \sup_{s \in [0,t]}\normmm{\omega(s)}^2_s.
\end{equation}
Let
\begin{equation*}
M'=\sum_{i+j \leq 3} \Vert \partial_x \partial_y \omega_0 \Vert_{L^2(y \geq 1/4)}.
\end{equation*}
In view of the definition of $X$ and $Y$ norms, we have
\begin{equation*}
M' \lesssim M.
\end{equation*}
Applying Lemma \ref{Sobolev} yields 
\begin{equation}\label{eqSobolev}
\Vert \omega(t) \Vert_Z \lesssim \left(M' + \frac{1+ \sup_{t \in [0,\frac{\mu_0}{2 \gamma}]} \normmm{\omega(t)}_t^{3/2}}{\sqrt{\gamma}}\right) e^{\frac{C}{\gamma}(1+ \sup_{t \in [0,\frac{\mu_0}{2 \gamma}]} \normmm{\omega(t)}_t)}.
\end{equation}
Combining  \eqref{eqX}, \eqref{eqY}, and \eqref{eqSobolev}, we have
\begin{equation*}
\begin{aligned}
\sup_{t \in [0,\frac{\mu_0}{2 \gamma}]} \normmm{\omega(t)}_t \leq& C(\tilde{M} + \bar{M}) + C\frac{1}{\gamma^{\min\{ \a, 1-2\a \}}}\sup_{t \in [0,\frac{\mu_0}{2 \gamma}]} \normmm{\omega(t)}_t\\ 
&+ C\left(M' + \frac{1+ \sup_{t \in [0,\frac{\mu_0}{2 \gamma}]} \normmm{\omega(t)}_t^{3/2}}{\sqrt{\gamma}}\right) e^{\frac{C}{\gamma}(1+ \sup_{t \in [0,\frac{\mu_0}{2 \gamma}]} \normmm{\omega(t)}_t)},
\end{aligned}
\end{equation*}
where $C\geq 1$ is a constant which depends only on $\mu_0$ and $\theta_0$. Using a standard barrier argument, one may show that if $\gamma$ is large enough, there is 
\begin{equation*}
\sup_{t \in [0,\frac{\mu_0}{2 \gamma}]} \normmm{\omega(t)}_t \leq2C(\tilde{M} + \bar{M} + M' +1),
\end{equation*}
which concludes the proof of the theorem.
\end{proof}

\subsection{Proof of Theorem \ref{thm1}}
The proof of Theorem \ref{thm1} is the same as that of Theorem 3.2 in \cite{KukVicWan20} with small modification. We include it here for the sake of completeness. 
\begin{proof}
Let $T>0$ be as in Theorem \ref{thm0}. In view of Kato's criterion, we only need to show that
\begin{equation}\label{Kato}
\lim_{\nu \to 0} \nu \int_0^T \int_{\mathbb{H}} |\nabla u|^2 dxdyds=0.
\end{equation}
Note that 
\begin{equation*}
\begin{aligned}
 \nu \int_0^T \int_{\mathbb{H}} |\nabla u|^2 dxdyds 
=& \nu  \int_0^T \int_{\mathbb{H}} |\omega|^2 dxdyds\\
=& \nu \int_0^T \int_{\{y \leq 1/2\}} |\omega|^2 dxdyds + \nu \int_0^T \int_{\{y \geq 1/2\}} |\omega|^2 dxdyds
\end{aligned}
\end{equation*}
Since $\sqrt{\nu} \lesssim w_t(y)$, there is
\begin{equation*}
\begin{aligned}
&\nu \int_0^T \int_{\{y \leq 1/2\}} |\omega|^2 dxdyds\\
\lesssim& \sqrt{\nu} \int_0^T \sum_{\xi} \Vert  e^{\e_0(1-y)_+|\xi|} w_s(y) \omega_{\xi}(s) \Vert_{L^{\infty}(y \leq 1/2)}  \Vert  e^{\e_0(1-y)_+|\xi|} \omega_{\xi}(s) \Vert_{L^{1}(y \leq 1/2)} ds\\
\lesssim& \sqrt{\nu} \int_0^T \Vert  \omega(s) \Vert_{X(s)} \Vert \omega(s) \Vert_{Y(s)} ds.
\end{aligned}
\end{equation*}
Moreover, we have
\begin{equation*}
 \nu \int_0^T \int_{\{y \geq 1/2\}} |\omega|^2 dxdyds \lesssim \nu \int_0^T \Vert \omega(s) \Vert_S^2 ds.
\end{equation*}
Therefore, combining the two estimate above, we have
\begin{equation*}
\nu \int_0^T \int_{\mathbb{H}} |\nabla u|^2 dxdyds \lesssim M \sqrt{\nu} + M^2 \nu,
\end{equation*}
and \eqref{Kato} follows.
\end{proof}

\appendix     
\section{Estimate of $I$ and $J$ terms}
In this section, we estimate $I_1$ to $I_4$ in the proof of Theorem \ref{thm0}. All the notations are the same as those in the proof.
For the $I_1$ term, we  let $s= t \sin^2\theta$, $a= (\mu_0-\mu)/\gamma \sqrt{t}$ to get
\begin{equation*}
\begin{aligned}
\abs{I_1}=&\abs{\int_0^t \frac{1}{(\mu_0- \mu -\gamma \sqrt{s})^{1+\a}} \frac{1}{\sqrt{t-s}} \frac{1}{\sqrt{\nu}}ds \cdot  \mathbbm{1}_{\{0 \leq t \leq {\nu^2}/\gamma^2\}}} \\
\lesssim& \int_0^{\pi/2} \frac{1}{(\mu_0- \mu - \gamma \sqrt{t} \sin \theta)^{1+ \a}} \frac{1}{\sqrt{t} \cos \theta} \frac{1}{\sqrt{\nu}} t \sin \theta  \cos \theta d \theta \cdot  \mathbbm{1}_{\{0 \leq t \leq {\nu^2}/\gamma^2\}} \\
\lesssim&  \int_0^{\pi/2} \frac{1}{(\frac{\mu_0- \mu}{\gamma \sqrt{t}} - \sin \theta)^{1+ \a}}  \frac{1}{\gamma^{1+\a} \sqrt{t}^{\a}}\frac{1}{\sqrt{\nu}} \sin \theta d \theta \cdot  \mathbbm{1}_{\{0 \leq t \leq {\nu^2}/\gamma^2\}} \\
\lesssim&  \int_0^{\pi/2} \frac{1}{(a - \sin \theta)} \frac{(\gamma \sqrt{t})^{\a}}{(\mu_0-\mu -\gamma \sqrt{t})^\a} \frac{1}{\gamma^{1+\a} \sqrt{t}^{\a}}\frac{1}{\sqrt{\nu}} \sin \theta d \theta \cdot  \mathbbm{1}_{\{0 \leq t \leq {\nu^2}/\gamma^2\}} \\
\lesssim&  \int_0^{\pi/2} \frac{1}{(a - \sin \theta)} \sin \theta d \theta  \frac{1}{(\mu_0-\mu -\gamma \sqrt{t})^\a} \frac{1}{\gamma} \frac{1}{\sqrt{\nu}}\cdot  \mathbbm{1}_{\{0 \leq t \leq {\nu^2}/\gamma^2\}}. 
\end{aligned}
\end{equation*}
By a change of variable, we further arrive at
\begin{equation*}
\begin{aligned}
 \int_0^{\pi/2} \frac{1}{(a - \sin \theta)} \sin \theta d \theta
\lesssim \int_0^1 \frac{1}{a-x} \frac{x}{\sqrt{1-x^2}} dx
\lesssim  \int_0^1 \frac{1}{a-x} \frac{1}{\sqrt{1-x}} dx.
\end{aligned}
\end{equation*}
Setting $\sqrt{1-x}=v$, we obtain
\begin{equation*}
\begin{aligned}
\int_0^{\pi/2} \frac{1}{(a - \sin \theta)} \sin \theta d \theta \lesssim& \int_0^1 \frac{1}{a-1+v^2} dv \lesssim& \frac{1}{\sqrt{a-1}} \arctan(\frac{1}{\sqrt{a-1}}).
\end{aligned}
\end{equation*}
Therefore, $I_1$ may be bounded as
\begin{equation*}
\begin{aligned}
I_1 
\lesssim& \frac{1}{\sqrt{a-1}}  \frac{1}{(\mu_0-\mu -\gamma \sqrt{t})^\a} \frac{1}{\gamma} \frac{1}{\sqrt{\nu}}\cdot  \mathbbm{1}_{\{0 \leq t \leq {\nu^2}/\gamma^2\}} \\
\lesssim&  \frac{\sqrt{\gamma \sqrt{t}}}{(\mu_0-\mu -\gamma \sqrt{t})^{1/2+\a}} \frac{1}{\gamma} \frac{1}{\sqrt{\nu}}\cdot  \mathbbm{1}_{\{0 \leq t \leq {\nu^2}/\gamma^2\}}\\ 
\lesssim&  \frac{1}{(\mu_0-\mu -\gamma \sqrt{t})^{1/2+\a}} \frac{1}{\sqrt{\gamma}}.
\end{aligned}
\end{equation*}
 Similarly for $I_2$, we have
\begin{equation*}
\begin{aligned}
&\int_0^{t} \frac{1}{(\mu_0- \mu -\gamma \sqrt{s})^{1+ \a}} \frac{1}{\sqrt{t-s}} \frac{1}{s^\beta} ds\\
\lesssim&  \int_0^{\pi/2} \frac{1}{(a - \sin \theta)} \sin \theta \frac{1}{(t \sin^2 \theta)^\beta} d \theta  \frac{1}{(\mu_0-\mu -\gamma \sqrt{t})^\a} \frac{1}{\gamma} \\
\lesssim&  \int_0^{\pi/2} \frac{1}{(a - \sin \theta)} \sin^{1-2\beta}(\theta) d \theta \frac{1}{t^\beta} \frac{1}{(\mu_0-\mu -\gamma \sqrt{t})^\a} \frac{1}{\gamma}. \\
\end{aligned}
\end{equation*}
A change of variable gives
\begin{equation*}
\begin{aligned}
&\int_0^{t} \frac{1}{(\mu_0- \mu -\gamma \sqrt{s})^{1+ \a}} \frac{1}{\sqrt{t-s}} \frac{1}{s^\beta} ds\\
\lesssim& \int_0^{1} \frac{1}{(a - x)} x^{-2\beta}\frac{x}{\sqrt{1-x^2}} dx \frac{1}{t^\beta} \frac{1}{(\mu_0-\mu -\gamma \sqrt{t})^\a} \frac{1}{\gamma} \\
\lesssim& \int_0^{1} \frac{1}{(a - x)} x^{-2\beta}\frac{1}{\sqrt{1-x}} dx \frac{1}{t^\beta} \frac{1}{(\mu_0-\mu -\gamma \sqrt{t})^\a} \frac{1}{\gamma} \\
\lesssim& \int_0^1 \frac{1}{a-1+v^2} \frac{1}{(1-v^2)^{2\beta}} dv\frac{1}{t^\beta} \frac{1}{(\mu_0-\mu -\gamma \sqrt{t})^\a} \frac{1}{\gamma} \\
\lesssim& \int_0^1 \frac{1}{a-1+v^2} \frac{1}{(1-v)^{2\beta}} dv\frac{1}{t^\beta} \frac{1}{(\mu_0-\mu -\gamma \sqrt{t})^\a} \frac{1}{\gamma},
\end{aligned}
\end{equation*}
and the integral above may be further estimated by
\begin{equation*}
\begin{aligned}
& \int_0^1 \frac{1}{a-1+v^2} \frac{1}{(1-v)^{2\beta}} dv\\
\lesssim& \frac{1}{a-1} \int_0^1 \frac{1}{1+(\frac{v}{\sqrt{a-1}})^2} \frac{1}{(1-v)^{2\beta}} d\frac{v}{\sqrt{a-1}}\\
\lesssim& \frac{\sqrt{\gamma \sqrt{t}}}{(\mu_0- \mu -\gamma \sqrt{t})^{1/2}} \int_0^{\frac{1}{\sqrt{a-1}}} \frac{1}{1+ y^2} \frac{1}{(1-\sqrt{a-1} y)^{2 \beta}} dy\\
\lesssim& \frac{\sqrt{\gamma \sqrt{t}}}{(\mu_0- \mu -\gamma \sqrt{t})^{1/2}} (\int_0^{\frac{1}{2 \sqrt{a-1}}} \frac{1}{1+ y^2} \frac{1}{(1-\sqrt{a-1} y)^{2 \beta}} dy \\
&+ \int_{\frac{1}{2 \sqrt{a-1}}}^{\frac{1}{ \sqrt{a-1}}} \frac{1}{1+ y^2} \frac{1}{(1-\sqrt{a-1} y)^{2 \beta}} dy)\\
\lesssim&\frac{\sqrt{\gamma \sqrt{t}}}{(\mu_0- \mu -\gamma \sqrt{t})^{1/2}} (\int_0^{\frac{1}{2 \sqrt{a-1}}} \frac{1}{1+ y^2} dy  + \frac{4a-4}{4a-3} \int_{\frac{1}{2 \sqrt{a-1}}}^{\frac{1}{ \sqrt{a-1}}}  \frac{1}{(1-\sqrt{a-1} y)^{2 \beta}} dy)\\
\lesssim& \frac{\sqrt{\gamma \sqrt{t}}}{(\mu_0- \mu -\gamma \sqrt{t})^{1/2}} (\frac{\pi}{2} + \frac{4\sqrt{a-1}}{4a-3})\lesssim \frac{\sqrt{\gamma \sqrt{t}}}{(\mu_0- \mu -\gamma \sqrt{t})^{1/2}}.
\end{aligned}
\end{equation*}
Therefore, we arrive at
\begin{equation*}
\begin{aligned}
\abs{I_2} \lesssim&  \frac{\sqrt{\gamma \sqrt{t}}}{(\mu_0- \mu -\gamma \sqrt{t})^{1/2}}\frac{1}{t^\beta} \frac{1}{(\mu_0-\mu -\gamma \sqrt{t})^\a} \frac{1}{\gamma} \lesssim \frac{1}{\sqrt{\gamma}} \frac{1}{(\mu_0-\mu -\gamma \sqrt{t})} t^{1/4-\beta}.
\end{aligned}
\end{equation*}
For  $I_3$ and $I_4$, letting $\sqrt{s}=u$, we have
\begin{equation*}
\begin{aligned}
\abs{I_3} =&\abs{\frac{1}{\sqrt{\nu}} \mathbbm{1}_{\{0 \leq t \leq {\nu^2}/\gamma^2\}}\int_0^{\sqrt{t}} \frac{1}{(\mu_0 - \mu - \gamma u)^{3/2 + \a}} udu} \\
\lesssim& \frac{\sqrt{t}}{\sqrt{\nu}} \frac{1}{(\mu_0 - \mu - \gamma \sqrt{t})^{1/2 + \a}} \mathbbm{1}_{\{0 \leq t \leq {\nu^2}/\gamma^2\}}\lesssim  \frac{\nu}{\sqrt{\nu} \gamma^2} \frac{1}{(\mu_0 - \mu - \gamma \sqrt{t})^{1/2 + \a}}\\
\lesssim&  \frac{1}{\sqrt{\gamma}} \frac{1}{(\mu_0 - \mu - \gamma \sqrt{t})^{1/2 + \a}},\\
\end{aligned}
\end{equation*}
and
\begin{equation*}
\begin{aligned}
\abs{I_4}=&\abs{\int_0^{\sqrt{t}} \frac{1}{(\mu_0 - \mu - \gamma u)^{3/2 + \a}} udu} \lesssim  \frac{1}{\sqrt{\gamma}} \frac{1}{(\mu_0 - \mu - \gamma \sqrt{t})^{1/2 + \a}}.\\
\end{aligned}
\end{equation*}
The $J_1$ to $J_4$ terms in the proof of Theorem \ref{thm0} are estimated in a similar manner and we omit further details.

%
%

\bibliographystyle{alpha}
\bibliography{FeiBib}

\end{document}